\newtheorem{thm}{Theorem}[section]
\newtheorem{lem}[thm]{Lemma}
\newtheorem{pro}[thm]{Proposition}
\newtheorem{defi}[thm]{Definition}
\newcommand {\emptycomment}[1]{}
\newcommand{\lon }{\,\rightarrow\,}
\newcommand{\be }{\begin{equation}}
\newcommand{\ee }{\end{equation}}
\newcommand{\g}{\mathfrak g}
\newcommand{\huaG}{\mathcal{G}}
\newcommand{\huaH}{\mathcal{H}}
\newcommand{\Id}{\rm{Id}}
\newcommand{\br}[1]{   [ \cdot,    \cdot  ]   }
\newcommand{\dM}{\mathrm{d}}
\newcommand{\Hom}{\mathrm{Hom}}
\newcommand{\gl}{\mathfrak {gl}}
\newcommand{\sgn}{\mathrm{sgn}}
\begin{document}

\title[]{Deformations and abelian extensions of compatible pre-Lie algebras}

\author{Shanshan Liu}
\address{School of Mathematics and Statistics, Northeast Normal University, Changchun 130024, Jilin, China}
\email{shanshanmath@163.com}

\author{Liangyun Chen*}
\address{School of Mathematics and Statistics, Northeast Normal University, Changchun 130024, China}
\email{chenly640@nenu.edu.cn}


\begin{abstract}
In this paper,  we first give the notation of a compatible pre-Lie algebra and its representation.  We study the relation between compatible Lie algebras and  compatible pre-Lie algebras. We also construct a new bidifferential graded Lie algebra whose Maurer-Cartan elements are compatible pre-Lie structures. We give the bidifferential graded Lie algebra which controls deformations of a compatible pre-Lie algebra. Then, we introduce a cohomology of a compatible pre-Lie algebra with coefficients in itself. We study infinitesimal deformations of compatible pre-Lie algebras and show that equivalent infinitesimal deformations are in the same second cohomology group. We further give the notion of a Nijenhuis operator on a compatible pre-Lie algebra. We study formal deformations of compatible pre-Lie algebras. If the second cohomology group $\huaH^2(\g;\g)$ is trivial, then the compatible pre-Lie algebra is rigid. Finally, we give a cohomology of a compatible pre-Lie algebra with coefficients in arbitrary representation and study abelian extensions of compatible pre-Lie algebras using this cohomology. We show that abelian extensions are classified by the second cohomology group.
\end{abstract}

\renewcommand{\thefootnote}{\fnsymbol{footnote}}
\footnote[0]{* Corresponding author}
\keywords{compatible pre-Lie algebra, Maurer-Cartan element, cohomology, deformation, abelian extension }
\footnote[0]{{\it{MSC 2020}}: 17A36, 17A40, 17B10, 17B40, 17B60, 17B63, 17D25}

\maketitle
\vspace{-5mm}
\tableofcontents

\allowdisplaybreaks


\section{Introduction}
The notion  of a pre-Lie algebra (also called  left-symmetric algebras, quasi-associative algebras, Vinberg algebras and so on) has been introduced independently  by M. Gerstenhaber in  deformation theory  of rings and algebras \cite{Gerstenhaber63}. Pre-Lie algebra arose from the study of affine manifolds and affine structures on Lie group \cite{JLK}, homogeneous convex  cones \cite{Vinberg}. Its defining identity is weaker than associativity. This algebraic structure describes some properties of cochains space in Hochschild cohomology of an associative algebra, rooted trees and vector fields on affine spaces.
 Moreover, it is playing an increasing role in algebra, geometry and physics due to their applications in nonassociative algebras, combinatorics,  numerical Analysis and quantum field theory, see also in \cite{BD,Bai,Bai2,CK}. There is a close relationship between pre-Lie algebras and Lie algebras: a pre-Lie algebra $(\g,\cdot)$ gives rise to a Lie algebra $(\g,[\cdot,\cdot]_C)$ via the commutator bracket, which is called the subadjacent Lie algebra and denoted by ${\g}^C$. Furthermore, the map $L:\g\longrightarrow\gl(\g)$, defined by $L_xy=x\cdot y$ for all $x,y\in \g$, gives rise to a representation of the subadjacent Lie algebra ${\g}^C$ on $\g$.

Compatible algebraic structures refer to two algebraic structures of the same kind in a linear category such that any linear combination of multiplications corresponding to these two algebraic structures still defines the same kind of algebraic structure. Compatible algebraic structures have been widely studied in mathematics and mathematical physics. Golubchik and Sokolov studied compatible Lie algebras with the background of integrable equations \cite{GZI1}, classical Yang-Baxter
equations  \cite{GZI2}, loop algebras over Lie algebras  \cite{GZI3}. Compatible Lie algebras are also related to elliptic theta functions \cite{OAV}. Classification, operads and bialgebra theory of compatible Lie algebras were also studied in \cite{PA,SH,WM}. Recently, in \cite{Liu2}, deformation theory and the cohomology theory of compatible Lie algebras were established by using the theory of bidifferential graded Lie algebras. Using similar ideas, compatible $L_{\infty}$-algebras were studied in \cite{DA}, compatible associative algebras were studied in \cite{CT} and compatible $3$-Lie algebras were studied in \cite{HS}.

The purpose of the paper is to study the cohomology of a compatible pre-Lie algebra and its applications. We construct a bidifferential graded Lie algebra whose Maurer-Cartan elements are compatible pre-Lie structures. Then, we introduce a cohomology of a compatible pre-Lie algebra with coefficients in itself. As applications, we study formal deformations and infinitesimal deformations of compatible pre-Lie algebras and give the notion of a Nijenhuis operator on a compatible pre-Lie algebra. We give a compatible pre-Lie algebra $(\g,\pi_1,\pi_2)$ and its representation $(V,\rho,\mu,\tilde{\rho},\tilde{\mu})$ and construct a bidifferential graded Lie algebra whose Maurer-Cartan elements is $(\pi_1+\rho+\mu,\pi_2+\tilde{\rho}+\tilde{\mu})$. Then we give a cohomology of a compatible pre-Lie algebra with coefficients in arbitrary representation. As applications, we study abelian extensions of compatible pre-Lie algebras.

The paper is organized as follows. In Section \ref{sec:Maurer-Cartan}, we give the notation of a compatible pre-Lie algebra and its representation. We study the relation between compatible Lie algebras and  compatible pre-Lie algebras. We recall the notion of bidifferential graded Lie algebra and construct a new bidifferential graded Lie algebra whose Maurer-Cartan elements are compatible pre-Lie structures. Furthermore, we  give the bidifferential graded Lie algebra which controls deformations of a compatible pre-Lie algebra. In Section \ref{sec:infinitesimal-deformation}, we introduce a cohomology of a compatible pre-Lie algebra with coefficients in itself. Using this cohomology, we study infinitesimal deformations of compatible pre-Lie algebras and show that equivalent infinitesimal deformations are in the same second cohomology group. We give the notion of a Nijenhuis operator on a compatible pre-Lie algebra. We show that a trivial deformation gives rise to a Nijenhuis operator. Conversely, a Nijenhuis operator gives rise to a trivial deformation. We study formal deformations of compatible pre-Lie algebras. If the second cohomology group $\huaH^2(\g;\g)$ is trivial, then the compatible pre-Lie algebra is rigid.  In Section \ref{sec:extension}, we give a compatible pre-Lie algebra $(\g,\pi_1,\pi_2)$ and its representation $(V,\rho,\mu,\tilde{\rho},\tilde{\mu})$ and construct a bidifferential graded Lie algebra whose Maurer-Cartan elements is $(\pi_1+\rho+\mu,\pi_2+\tilde{\rho}+\tilde{\mu})$. Using this bidifferential graded Lie algebra, we give a cohomology of a compatible pre-Lie algebra with coefficients in arbitrary representation. We study abelian extensions of compatible pre-Lie algebras using this cohomological approach and show that abelian extensions are classified by the second cohomology group.
\vspace{2mm}
\noindent
\section{Maurer-Cartan characterizations of compatible pre-Lie algebras}\label{sec:Maurer-Cartan}
In this section, first, we give the notation of a compatible pre-Lie algebra and its representation. Then, we study the relation between compatible Lie algebras and  compatible pre-Lie algebras. Finally, we construct a new bidifferential graded Lie algebra whose Maurer-Cartan elements are compatible pre-Lie structures. We  give the bidifferential graded Lie algebra which controls deformations of a compatible pre-Lie algebra.
\begin{defi}{\rm(\cite{BD})}
A {\bf pre-Lie algebra} $(\g,\cdot)$ is a vector space $\g$ equipped with a bilinear product $\cdot:\g\otimes \g\longrightarrow \g$, such that for all $x,y,z\in \g$, the following equality is satisfied:
\begin{equation*}
(x\cdot y)\cdot z-x\cdot (y\cdot z)=(y\cdot x)\cdot z-y\cdot (x\cdot z).
\end{equation*}
\end{defi}
Let $(\g,\cdot)$ be a pre-Lie algebra. The commutator $[x,y]_C=x\cdot y-y\cdot x$ gives a Lie algebra $(\g,[\cdot,\cdot]_C)$, which is denoted by ${\g}^C$ and called  the {\bf sub-adjacent Lie algebra} of $(\g,\cdot)$.
\begin{defi}{\rm(\cite{Bai})}
 A {\bf representation} of a pre-Lie algebra $(\g,\cdot)$ on a vector space $V$ consists of a pair $(\rho,\mu)$, where $\rho:\g\longrightarrow \gl(V)$ is a representation of the sub-adjacent Lie algebra ${\g}^C$ on $V$, and $\mu:\g\longrightarrow \gl(V)$ is a linear map, such that for all $x,y\in \g$:
\begin{equation*}
\mu(y)\circ\mu(x)-\mu(x\cdot y)=\mu(y)\circ\rho(x)-\rho(x)\circ\mu(y).
\end{equation*}
\end{defi}
We denote a representation of a pre-Lie algebra $(\g,\cdot)$ by $(V,\rho,\mu)$. Furthermore, let $L,R:\g\longrightarrow \gl(\g)$ be linear maps, where $L_xy=x\cdot y, R_xy=y\cdot x$. Then $(\g,L,R)$ is also a representation, which is called the regular representation.

A permutation $\sigma\in \mathbb{S}_n$ is called an $(i,n-i)$-unshuffle if $\sigma(1)<\dots<\sigma(i)$ and $\sigma(i+1)<\dots<\sigma(n)$. If $i=0$ and $i=n$, we assume $\sigma=\Id$. The set of all $(i,n-i)$-unshuffles will be denoted by $\mathbb{S}_{(i,n-i)}$. The notion of an $(i_1,\dots,i_k)$-unshuffle and the set $\mathbb{S}_{(i_1,\dots,i_k)}$ are defined similarly.

Let $\g$ be a vector space. We denote $C^n(\g;\g)=\Hom(\wedge^{n-1}\g\otimes \g,\g)$ and consider the graded vector space $C^*(\g;\g)=\oplus_{n=1}^{+\infty}C^n(\g;\g)=\oplus_{n=1}^{+\infty}\Hom(\wedge^{n-1}\g\otimes \g,\g)$. It was shown in \cite{FC,AN,QW} that $C^*(\g;\g)$ equipped with the Matsushima-Nijenhuis bracket
\begin{equation*}
[P,Q]^{MN}=P\circ Q-(-1)^{pq}Q\circ P,\quad \forall P\in C^{p+1}(\g;\g), Q\in C^{q+1}(\g;\g)
\end{equation*}
ia a graded Lie algebra, where $P\circ Q\in C^{p+q+1}(\g;\g)$ is defined by
\begin{eqnarray*}
 &&P\circ Q(x_1,\dots,x_{p+q+1})\\
&=&\sum_{\sigma\in \mathbb{S}(q,1,p-1)}\sgn(\sigma)P(Q(x_{\sigma(1)},\dots,x_{\sigma(q)},x_{\sigma(q+1)}),x_{\sigma(q+2)},\dots,x_{\sigma(p+q)},x_{p+q+1})\\
&&+(-1)^{pq}\sum_{\sigma\in \mathbb{S}(p,q)}\sgn(\sigma)P(x_{\sigma(1)},\dots,x_{\sigma(p)},Q(x_{\sigma(p+1)},\dots,x_{\sigma(p+q)},x_{p+q+1})).
\end{eqnarray*}
In particular, $\pi\in \Hom(\otimes^2\g,\g)$ defines a pre-Lie algebra if and only if $[\pi,\pi]^{MN}=0$. If $\pi$ is a pre-Lie algebra structure, then $d_{\pi}:=[\pi,\cdot]^{MN}$ is a graded derivation of the graded Lie algebra $(C^*(\g;\g),[\cdot,\cdot]^{MN})$ satisfying $d_{\pi}\circ d_{\pi}=0$, so that $(C^*(\g;\g),[\cdot,\cdot]^{MN},d_{\pi})$ becomes a differential graded Lie algebra.

\begin{defi}
A {\bf compatible pre-Lie algebra} is a triple $(\g,\cdot,\ast)$, where $\g$ is a vector space, $``\cdot$'' and $``\ast$'' are pre-Lie structures on $\g$, such that for all $x,y,z\in \g$, the following equality is satisfied:
\begin{equation}\label{compatible-pre-Lie}
(x\ast y)\cdot z+(x\cdot y)\ast z-x\cdot(y\ast z)-x\ast(y\cdot z)-(y\ast x)\cdot z-(y\cdot x)\ast z+y\cdot(x\ast z)+y\ast(x\cdot z)=0.
\end{equation}
\end{defi}
\begin{pro}\label{linear}
A triple $(\g,\cdot,\ast)$ is a compatible pre-Lie algebra if and only if $``\cdot$'' and $``\ast$'' are pre-Lie structures on $\g$, such that for all $k_1,k_2\in \mathbb{K}$, the following bilinear operation
\begin{equation}\label{compatible-product}
x\diamond y=k_1 x\cdot y+k_2x\ast y,\quad \forall x,y\in\g.
\end{equation}
defines a pre-Lie algebra structure on $\g$.
\end{pro}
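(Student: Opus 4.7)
The plan is to start by expanding $(x\diamond y)\diamond z$ and $x\diamond(y\diamond z)$ bilinearly in $k_1, k_2$. Each expansion splits into four terms, weighted by $k_1^2$, $k_1 k_2$, $k_1 k_2$, and $k_2^2$, corresponding respectively to the four pure-and-mixed products $(x\cdot y)\cdot z$, $(x\ast y)\cdot z$, $(x\cdot y)\ast z$, $(x\ast y)\ast z$ and their counterparts on the right.

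Subtracting the two expansions and then antisymmetrizing in $(x,y)$ as required by the pre-Lie identity, one groups the resulting expression as
\begin{equation*}
k_1^2\, \Phi_\cdot(x,y,z) + k_1 k_2\, \Psi(x,y,z) + k_2^2\, \Phi_\ast(x,y,z),
\end{equation*}
where $\Phi_\cdot(x,y,z)=(x\cdot y)\cdot z-x\cdot(y\cdot z)-(y\cdot x)\cdot z+y\cdot(x\cdot z)$ is the left-symmetric defect of $\cdot$, $\Phi_\ast$ is the analogous defect of $\ast$, and $\Psi(x,y,z)$ is precisely the left-hand side of the compatibility identity \eqref{compatible-pre-Lie}.

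For the forward direction, suppose $(\g,\cdot,\ast)$ is a compatible pre-Lie algebra. Then $\Phi_\cdot$ and $\Phi_\ast$ vanish because $\cdot$ and $\ast$ are pre-Lie, and $\Psi$ vanishes by \eqref{compatible-pre-Lie}. Hence $\diamond$ satisfies the pre-Lie identity for every choice of $k_1, k_2 \in \mathbb{K}$. For the converse, already assuming that $\cdot$ and $\ast$ are pre-Lie, specializing to $k_1=k_2=1$ forces the $\Psi$-term to vanish on its own, which is exactly \eqref{compatible-pre-Lie}.

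The only potential difficulty is the bookkeeping in the bilinear expansion; there is no new conceptual ingredient beyond distributivity and the observation that the pure $k_1^2$ and $k_2^2$ terms isolate the individual pre-Lie identities while the mixed $k_1 k_2$ term isolates the compatibility condition.
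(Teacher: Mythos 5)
Your argument is correct: the bilinear expansion isolates the two pre-Lie defects in the $k_1^2$ and $k_2^2$ terms and the compatibility expression \eqref{compatible-pre-Lie} in the $k_1k_2$ term, and the specialization $k_1=k_2=1$ settles the converse without any division, so it works over any field. This is exactly the straightforward verification the paper leaves to the reader, so your proof matches the intended approach.
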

\begin{proof}
It is straightforward.
\end{proof}

\begin{defi}
Let $(\g,\cdot,\ast)$ and $(\g',\cdot',\ast')$ be two compatible pre-Lie algebras. A {\bf homomorphism} $\varphi:(\g,\cdot,\ast)\longrightarrow (\g',\cdot',\ast')$ is both a pre-Lie homomorphism from $(\g,\cdot)$ to $(\g',\cdot')$ and a pre-Lie homomorphism from $(\g,\ast)$ to $(\g',\ast')$.
\end{defi}

\begin{defi}
 A {\bf representation} of a compatible pre-Lie algebra $(\g,\cdot,\ast)$ on a vector space $V$ consists of a quadruple $(\rho,\mu,\tilde{\rho},\tilde{\mu})$, where $(V,\rho,\mu)$ is a representation of the pre Lie algebra $(\g,\cdot)$ and $(V,\tilde{\rho},\tilde{\mu})$ is a representation of the pre Lie algebra $(\g,\ast)$, such that for all $x,y\in \g$:
\begin{eqnarray}
\label{rep-1}\rho(x\ast y)+\tilde{\rho}(x\cdot y)-\rho(x)\tilde{\rho}(y)-\tilde{\rho}(x)\rho(y)&=&\rho(y\ast x)+\tilde{\rho}(y\cdot x)-\rho(y)\tilde{\rho}(x)-\tilde{\rho}(y)\rho(x),\\
\label{rep-2}\mu(y)\tilde{\rho}(x)-\rho(x)\tilde{\mu}(y)-\mu(y)\tilde{\mu}(x)+\mu(x\ast y)&=&-\tilde{\mu}(y)\rho(x)+\tilde{\rho}(x)\mu(y)+\tilde{\mu}(y)\mu(x)-\tilde{\mu}(x\cdot y).
\end{eqnarray}
\end{defi}
We denote a representation of a compatible pre-Lie algebra $(\g,\cdot,\ast)$ by $(V,\rho,\mu,\tilde{\rho},\tilde{\mu})$. Furthermore, let $L,R,\tilde{L},\tilde{R}:\g\longrightarrow \gl(\g)$ be linear maps, where $L_xy=x\cdot y, R_xy=y\cdot x, \tilde{L}_xy=x\ast y, \tilde{R}_xy=y\ast x$. Then $(\g,L,R,\tilde{L},\tilde{R})$ is also a representation, which is called the {\bf regular representation}.

 We define two bilinear operations $\cdot_{\g\oplus V}:\otimes^2(\g\oplus V)\lon(\g\oplus V)$ and $\ast_{\g\oplus V}:\otimes^2(\g\oplus V)\lon(\g\oplus V)$ respectively by
\begin{eqnarray*}
(x+u)\cdot_{\g\oplus V} (y+v)&=&x\cdot y+\rho(x)(v)+\mu(y)(u),\quad \forall x,y \in \g,  u,v\in V,\\
(x+u)\ast_{\g\oplus V} (y+v)&=&x\ast y+\tilde{\rho}(x)(v)+\tilde{\mu}(y)(u),\quad \forall x,y \in \g,  u,v\in V.
\end{eqnarray*}
\begin{pro}
 With the above notation, $(\g\oplus V,\cdot_{\g\oplus V},\ast_{\g\oplus V})$ is a compatible pre-Lie algebra, which is denoted by $\g\ltimes_{(\rho,\mu,\tilde{\rho},\tilde{\mu})}V$ and called  the  {\bf semi-direct product} of the compatible pre-Lie algebra $(\g,\cdot,\ast)$ and the representation $(V,\rho,\mu,\tilde{\rho},\tilde{\mu})$.
\end{pro}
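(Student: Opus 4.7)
The plan is to verify directly the three defining axioms of a compatible pre-Lie algebra for $(\g\oplus V,\cdot_{\g\oplus V},\ast_{\g\oplus V})$: that each of $\cdot_{\g\oplus V}$ and $\ast_{\g\oplus V}$ is a pre-Lie product in its own right, and that together they satisfy the compatibility equation \eqref{compatible-pre-Lie}.

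First I would handle the two pre-Lie axioms. Each is the standard fact that the semi-direct product of a pre-Lie algebra with one of its representations is again a pre-Lie algebra. Concretely, substituting $x+u,y+v,z+w$ into the left pre-Lie identity for $\cdot_{\g\oplus V}$ and expanding, the $\g$-component is precisely the pre-Lie identity for $(\g,\cdot)$, while the $V$-component splits into pieces that together amount exactly to $(V,\rho,\mu)$ being a representation of $(\g,\cdot)$. The identical argument with $(\tilde\rho,\tilde\mu)$ and $(\g,\ast)$ takes care of $\ast_{\g\oplus V}$.

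The main work is the compatibility step. I would substitute $x+u,\,y+v,\,z+w$ into \eqref{compatible-pre-Lie} (with $\cdot,\ast$ replaced by $\cdot_{\g\oplus V},\ast_{\g\oplus V}$), expand using the definitions of the two products on $\g\oplus V$, and project onto $\g$ and $V$. The $\g$-projection is exactly \eqref{compatible-pre-Lie} for $(\g,\cdot,\ast)$ and so vanishes by hypothesis. The $V$-projection decomposes further according to whether the operator acts on $u$, on $v$, or on $w$. The coefficient of $w$ is built only out of $\rho,\tilde\rho$ together with the products $x\cdot y,x\ast y$ and their swaps, and the identity that makes it vanish is precisely \eqref{rep-1}. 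The coefficients of $u$ and of $v$ involve mixtures of $\rho,\tilde\rho,\mu,\tilde\mu$; after grouping, each of them matches \eqref{rep-2} (applied once with the roles of the $\g$-arguments suitably relabeled), so both vanish as well.

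The only real obstacle is bookkeeping: \eqref{compatible-pre-Lie} has eight terms, each of which expands into four scalar contributions on $\g\oplus V$, giving thirty-two summands that must be partitioned correctly among the four identities above. No additional conceptual ingredient beyond \eqref{compatible-pre-Lie} for $(\g,\cdot,\ast)$ and the representation axioms \eqref{rep-1} and \eqref{rep-2} is required, and the proposition follows.
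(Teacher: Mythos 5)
Your proposal is correct and follows essentially the same route as the paper: the two pre-Lie structures on $\g\oplus V$ are the standard semi-direct products, and the compatibility identity is verified by substituting $x+u,y+v,z+w$ into \eqref{compatible-pre-Lie}, with the $\g$-component vanishing by \eqref{compatible-pre-Lie} for $(\g,\cdot,\ast)$, the $w$-terms by \eqref{rep-1}, and the $u$- and $v$-terms by \eqref{rep-2} applied to $(y,z)$ and $(x,z)$ respectively. This matches the paper's computation exactly.
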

\begin{proof}
Obviously, $(\g\oplus V,\cdot_{\g\oplus V})$ and $(\g\oplus V,\ast_{\g\oplus V})$ are pre-Lie algebras. For all $x,y,z\in\g, u,v,w\in V$, by \eqref{compatible-pre-Lie}, \eqref{rep-1} and \eqref{rep-2}, we have
\begin{eqnarray*}
&&((x+u)\ast_{\g\oplus V} (y+v))\cdot_{\g\oplus V} (z+w)+((x+u)\cdot_{\g\oplus V} (y+v))\ast_{\g\oplus V} (z+w)\\
&&-(x+u)\cdot_{\g\oplus V}((y+v)\ast_{\g\oplus V} (z+w))-(x+u)\ast_{\g\oplus V}((y+v)\cdot_{\g\oplus V} (z+w))\\
&&-((y+v)\ast_{\g\oplus V} (x+u))\cdot_{\g\oplus V} (z+w)-((y+v)\cdot_{\g\oplus V} (x+u))\ast_{\g\oplus V} (z+w)\\
&&+(y+v)\cdot_{\g\oplus V}((x+u)\ast_{\g\oplus V} (z+w))+(y+v)\ast_{\g\oplus V}((x+u)\cdot_{\g\oplus V} (z+w))\\
&=&(x\ast y)\cdot z+\rho(x\ast y)w+\mu(z)\tilde{\rho}(x)v+\mu(z)\tilde{\mu}(y)u+(x\cdot y)\ast z+\tilde{\rho}(x\cdot y)w+\tilde{\mu}(z)\rho(x)v+\tilde{\mu}(z)\mu(y)u\\
&&-x\cdot(y\ast z)-\rho(x)\tilde{\rho}(y)w-\rho(x)\tilde{\mu}(z)v-\mu(y\ast z)u-x\ast(y\cdot z)-\tilde{\rho}(x)\rho(y)w-\tilde{\rho}(x)\mu(z)v-\tilde{\mu}(y\cdot z)u\\
&&-(y\ast x)\cdot z-\rho(y\ast x)w-\mu(z)\tilde{\rho}(y)u-\mu(z)\tilde{\mu}(x)v-(y\cdot x)\ast z-\tilde{\rho}(y\cdot x)w-\tilde{\mu}(z)\rho(y)u-\tilde{\mu}(z)\mu(x)v\\
&&+y\cdot(x\ast z)+\rho(y)\tilde{\rho}(x)w+\rho(y)\tilde{\mu}(z)u+\mu(x\ast z)v+y\ast(x\cdot z)+\tilde{\rho}(y)\rho(x)w+\tilde{\rho}(y)\mu(z)u+\tilde{\mu}(x\cdot z)v\\
&=&0.
\end{eqnarray*}
This finishes the proof.
\end{proof}
Now, we will give the relation between compatible Lie algebras and compatible pre-Lie algebras. First, we will recall the notation of a compatible Lie algebra and its representation.
\begin{defi}{\rm(\cite{GZI1,GZI2,OAV})}
A {\bf compatible Lie algebra} is a triple $(\g,[\cdot,\cdot],\{\cdot,\cdot\})$, where $\g$ is a vector space, $[\cdot,\cdot]$ and $\{\cdot,\cdot\}$ are Lie algebra structures on $\g$, such that for all $x,y,z\in \g$, the following equality is satisfied:
\begin{equation*}
[\{x,y\},z]+[\{y,z\},x]+[\{z,x\},y]+\{[x,y],z\}+\{[y,z],x\}+\{[z,x],y\}=0.
\end{equation*}
\end{defi}
\begin{defi}{\rm(\cite{WM})}
 A {\bf representation} of a compatible Lie algebra $(\g,[\cdot,\cdot],\{\cdot,\cdot\})$ on a vector space $V$ consists of a pair $(\rho,\mu)$, where $(V,\rho)$ is a representation of the Lie algebra $(\g,[\cdot,\cdot])$ and $(V,\mu)$ is a representation of the Lie algebra $(\g,\{\cdot,\cdot\})$ such that for all $x,y\in \g$:
\begin{equation*}
\rho(\{x,y\})+\mu([x,y])=[\rho(x),\mu(y)]-[\rho(y),\mu(x)].
\end{equation*}
\end{defi}
We denote a representation of a compatible Lie algebra $(\g,[\cdot,\cdot],\{\cdot,\cdot\})$ by $(V,\rho,\mu)$.
\begin{pro}
Let $(\g,\cdot,\ast)$ be a compatible pre-Lie algebra. Define two brackets $[\cdot,\cdot]$ and $\{\cdot,\cdot\}$ respectively by
\begin{equation*}
[x,y]=x\cdot y-y\cdot x,\quad \{x,y\}=x\ast y-y\ast x.
\end{equation*}
Then, $(\g,[\cdot,\cdot],\{\cdot,\cdot\})$ is a compatible Lie algebra, which is denoted by ${\g}^C$ and called  the {\bf sub-adjacent compatible Lie algebra} of $(\g,\cdot,\ast)$. Moreover, let $L,\tilde{L}$ be linear maps, where $L_xy=x\cdot y, \tilde{L}_xy=x\ast y$. Then $(V,L,\tilde{L})$ is a representation of the sub-adjacent compatible Lie algebra $(\g,[\cdot,\cdot],\{\cdot,\cdot\})$.
\end{pro}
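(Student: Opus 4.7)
The plan is to exploit Proposition \ref{linear}, which already tells us that the linear combination $x\diamond y = k_1\, x\cdot y + k_2\, x\ast y$ defines a pre-Lie structure on $\g$ for every pair $k_1, k_2 \in \mathbb{K}$. Combined with the standard fact that every pre-Lie algebra yields a sub-adjacent Lie algebra via the commutator and a regular left-multiplication representation, this lets me avoid expanding the twelve-term compatibility identity by hand and instead read off both conclusions as coefficients of a polynomial identity in $k_1$ and $k_2$.

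For the first assertion, consider the sub-adjacent Lie algebra of $(\g, \diamond)$. Its bracket is
\[
[x,y]_\diamond = x \diamond y - y \diamond x = k_1 [x,y] + k_2 \{x,y\},
\]
which satisfies the Jacobi identity for every $k_1, k_2$. Expanding this Jacobi identity as a polynomial in $k_1, k_2$, the coefficient of $k_1^2$ is Jacobi for $[\cdot,\cdot]$, the coefficient of $k_2^2$ is Jacobi for $\{\cdot,\cdot\}$, and the coefficient of $k_1 k_2$ is exactly
\[
[\{x,y\},z] + [\{y,z\},x] + [\{z,x\},y] + \{[x,y],z\} + \{[y,z],x\} + \{[z,x],y\} = 0.
\]
Since the monomials $k_1^2,\, k_1 k_2,\, k_2^2$ are linearly independent in $\mathbb{K}[k_1,k_2]$, each coefficient must vanish, and therefore $(\g, [\cdot,\cdot], \{\cdot,\cdot\})$ is a compatible Lie algebra.

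For the second assertion, the regular representation of the pre-Lie algebra $(\g, \diamond)$ on itself is given by left multiplication $L^\diamond_x y = x \diamond y$, so $L^\diamond = k_1 L + k_2 \tilde L$. Because the left-multiplication map of any pre-Lie algebra is a representation of its sub-adjacent Lie algebra, we have $L^\diamond_{[x,y]_\diamond} = [L^\diamond_x, L^\diamond_y]$ for all $x, y \in \g$ and all $k_1, k_2$. Expanding once more in $k_1, k_2$, the coefficients of $k_1^2$ and $k_2^2$ say that $L$ represents $(\g, [\cdot,\cdot])$ and $\tilde L$ represents $(\g, \{\cdot,\cdot\})$, while the coefficient of $k_1 k_2$ yields
\[
L_{\{x,y\}} + \tilde L_{[x,y]} = [L_x, \tilde L_y] - [L_y, \tilde L_x],
\]
which is precisely the compatibility axiom required for $(\g, L, \tilde L)$ to be a representation of the compatible Lie algebra just constructed.

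No step poses a real obstacle; the only thing to check is that the ``polynomial coefficient'' extraction is legitimate, which is clear from linear independence of $k_1^2, k_1 k_2, k_2^2$. If one prefers a direct argument, Part~1 can be obtained by expanding the twelve commutator terms and applying \eqref{compatible-pre-Lie} to the three cyclic permutations of $(x,y,z)$, and Part~2 by rewriting $L_{\{x,y\}} z + \tilde L_{[x,y]} z$ in terms of $\cdot$ and $\ast$ and recognizing \eqref{compatible-pre-Lie} directly.
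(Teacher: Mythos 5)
Your argument is correct, but it takes a different route from the paper. The paper proves both assertions by brute force: it expands the twelve commutator terms of the compatibility identity for $[\cdot,\cdot]$ and $\{\cdot,\cdot\}$ and observes that they cancel via \eqref{compatible-pre-Lie} applied to cyclic permutations of $(x,y,z)$, and likewise rewrites $L_{\{x,y\}}z+\tilde{L}_{[x,y]}z-[L_x,\tilde{L}_y]z+[L_y,\tilde{L}_x]z$ in terms of $\cdot$ and $\ast$ and recognizes \eqref{compatible-pre-Lie} directly. You instead polarize: by Proposition \ref{linear} every $\diamond=k_1\cdot+k_2\ast$ is pre-Lie, so its commutator bracket $k_1[\cdot,\cdot]+k_2\{\cdot,\cdot\}$ satisfies Jacobi and its left multiplication $k_1L+k_2\tilde{L}$ represents the corresponding sub-adjacent Lie algebra, and the mixed $k_1k_2$-terms are exactly the two compatibility conditions to be proved. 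This buys brevity and reuses the classical pre-Lie-to-Lie facts instead of redoing the computation, while the paper's computation is self-contained and insensitive to the ground field. One small caution about your extraction step: the identity holds for all \emph{values} of $k_1,k_2\in\mathbb{K}$, so invoking linear independence of $k_1^2,k_1k_2,k_2^2$ in $\mathbb{K}[k_1,k_2]$ silently identifies polynomial functions with formal polynomials, which fails over small finite fields. This is harmless here and easily repaired: the coefficients of $k_1^2$ and $k_2^2$ are already known to vanish (Jacobi for each individual sub-adjacent bracket, and the fact that $L$, $\tilde{L}$ represent them), so setting $k_1=k_2=1$ isolates the cross term directly, with no hypothesis on $\mathbb{K}$. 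With that phrasing adjusted, your proof is complete.
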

\begin{proof}
Obviously, $(\g,[\cdot,\cdot])$ and $(\g,\{\cdot,\cdot\})$ are Lie algebras. For all $x,y,z\in \g$, by \eqref{compatible-pre-Lie}, we have
\begin{eqnarray*}
&&[\{x,y\},z]+[\{y,z\},x]+[\{z,x\},y]+\{[x,y],z\}+\{[y,z],x\}+\{[z,x],y\}\\
&=&[x\ast y-y\ast x,z]+[y\ast z-z\ast y,x]+[z\ast x-x\ast z,y]\\
&&+\{x\cdot y-y\cdot x,z\}+\{y\cdot z-z\cdot y,x\}+\{z\cdot x-x\cdot z,y\}\\
&=&(x\ast y)\cdot z-(y\ast x)\cdot z-z\cdot(x\ast y)+z\cdot(y\ast x)+(y\ast z)\cdot x-(z\ast y)\cdot x\\
&&-x\cdot(y\ast z)+x\cdot(z\ast y)+(z\ast x)\cdot y-(x\ast z)\cdot y-y\cdot(z\ast x)+y\cdot(x\ast z)\\
&&+(x\cdot y)\ast z-(y\cdot x)\ast z-z\ast(x\cdot y)+z\ast(y\cdot x)+(y\cdot z)\ast x-(z\cdot y)\ast x\\
&&-x\ast(y\cdot z)+x\ast(z\cdot y)+(z\cdot x)\ast y-(x\cdot z)\ast y-y\ast(z\cdot x)+y\ast(x\cdot z)\\
&=&0.
\end{eqnarray*}
Thus, $(\g,[\cdot,\cdot],\{\cdot,\cdot\})$ is a compatible Lie algebra.

Obviously, $(V,L)$ is a representation of $(\g,[\cdot,\cdot])$ and $(V,\tilde{L})$ is a representation of $(\g,\{\cdot,\cdot\})$. For all $x,y,z\in \g$, by \eqref{compatible-pre-Lie}, we have
 \begin{eqnarray*}
&&L_{\{x,y\}}z+\tilde{L}_{[x,y]}z-[L_x,\tilde{L}_y]z+[L_y,\tilde{L}_x]z\\
&=&\{x,y\}\cdot z+[x,y]\ast z-L_x\tilde{L}_yz+\tilde{L}_yL_xz+L_y\tilde{L}_xz-\tilde{L}_xL_yz\\
&=&(x\ast y)\cdot z-(y\ast x)\cdot z+(x\cdot y)\ast z-(y\cdot x)\ast z-x\cdot(y\ast z)+y\ast(x\cdot z)+y\cdot(x\ast z)-x\ast(y\cdot z)\\
&=&0,
\end{eqnarray*}
which implies that $$L_{\{x,y\}}+\tilde{L}_{[x,y]}=[L_x,\tilde{L}_y]-[L_y,\tilde{L}_x].$$ Thus, $(V,L,\tilde{L})$ is a representation of the sub-adjacent compatible Lie algebra $(\g,[\cdot,\cdot],\{\cdot,\cdot\})$.
\end{proof}

\begin{pro}
Let $(V,\rho,\mu,\tilde{\rho},\tilde{\mu})$ be a representation of a compatible pre-Lie algebra $(\g,\cdot,\ast)$. Then $(V,\rho-\mu,\tilde{\rho}-\tilde{\mu})$ is a representation of the sub-adjacent compatible Lie algebra $(\g,[\cdot,\cdot],\{\cdot,\cdot\})$.
\end{pro}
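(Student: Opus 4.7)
The statement has two parts: first, that $\rho-\mu$ is a representation of the Lie algebra $(\g,[\cdot,\cdot])$ and $\tilde{\rho}-\tilde{\mu}$ is a representation of the Lie algebra $(\g,\{\cdot,\cdot\})$; second, that together they satisfy the compatibility condition in the definition of a representation of a compatible Lie algebra, namely
\begin{equation*}
(\rho-\mu)(\{x,y\})+(\tilde{\rho}-\tilde{\mu})([x,y])=[(\rho-\mu)(x),(\tilde{\rho}-\tilde{\mu})(y)]-[(\rho-\mu)(y),(\tilde{\rho}-\tilde{\mu})(x)].
\end{equation*}

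For the first part, I will appeal to the classical fact (which is an easy direct consequence of the representation identity $\mu(y)\mu(x)-\mu(x\cdot y)=\mu(y)\rho(x)-\rho(x)\mu(y)$ together with the Lie representation property of $\rho$) that whenever $(V,\pi,\nu)$ is a representation of a pre-Lie algebra $(\g,\cdot)$, the map $\pi-\nu$ is a representation of the sub-adjacent Lie algebra $\g^{C}$. Applying this observation to the pre-Lie representations $(V,\rho,\mu)$ of $(\g,\cdot)$ and $(V,\tilde\rho,\tilde\mu)$ of $(\g,\ast)$ respectively disposes of the first part at once.

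For the compatibility condition, I would expand both sides. The left-hand side, after using the definitions of $[\cdot,\cdot]$ and $\{\cdot,\cdot\}$, is the sum
\begin{equation*}
\rho(x\ast y)-\rho(y\ast x)-\mu(x\ast y)+\mu(y\ast x)+\tilde{\rho}(x\cdot y)-\tilde{\rho}(y\cdot x)-\tilde{\mu}(x\cdot y)+\tilde{\mu}(y\cdot x),
\end{equation*}
while the right-hand side expands into sixteen compositions. I would then partition these into two groups: the purely "$\rho$-$\tilde\rho$" group
\begin{equation*}
\rho(x)\tilde{\rho}(y)-\tilde{\rho}(y)\rho(x)+\tilde{\rho}(x)\rho(y)-\rho(y)\tilde{\rho}(x),
\end{equation*}
which matches $\rho(x\ast y)-\rho(y\ast x)+\tilde{\rho}(x\cdot y)-\tilde{\rho}(y\cdot x)$ by a direct rearrangement of \eqref{rep-1}, and the remaining "mixed" group involving at least one of $\mu,\tilde\mu$, which is handled by using \eqref{rep-2} together with the equation obtained from \eqref{rep-2} by interchanging $x$ and $y$. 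Subtracting these two versions of \eqref{rep-2} produces exactly the combination of $\mu$-$\tilde\rho$, $\rho$-$\tilde\mu$, $\mu$-$\tilde\mu$ terms needed to equate the remainder of the right-hand side with $-\mu(x\ast y)+\mu(y\ast x)-\tilde{\mu}(x\cdot y)+\tilde{\mu}(y\cdot x)$.

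The main obstacle is purely bookkeeping rather than conceptual: there are sixteen terms on the right-hand side and eight on the left, and the correct grouping must be chosen so that each of the two compatibility axioms \eqref{rep-1} and \eqref{rep-2} is applied in exactly the right form (the latter twice, once as stated and once with $x,y$ swapped). Once the grouping is fixed, the verification is mechanical. I expect the proof to occupy essentially one display per group, plus a short concluding remark that the first part follows from the classical pre-Lie case applied separately to $(\rho,\mu)$ and $(\tilde\rho,\tilde\mu)$.
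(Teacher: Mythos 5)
Your proposal is correct and follows essentially the same route as the paper: the first part is dispatched by the standard pre-Lie fact applied separately to $(\rho,\mu)$ and $(\tilde\rho,\tilde\mu)$, and the compatibility identity is verified by expanding and invoking \eqref{rep-1} together with \eqref{rep-2} and its $x\leftrightarrow y$ swap, exactly the cancellation the paper carries out in one display. Your explicit grouping of the sixteen right-hand terms is just a tidier organization of the same computation, and it does check out.
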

\begin{proof}
 Obviously, $(V,\rho-\mu)$ is a representation of the Lie algebra $(\g,[\cdot,\cdot])$ and $(V,\tilde{\rho}-\tilde{\mu})$ is a representation of the Lie algebra $(\g,\{\cdot,\cdot\})$. For all $x,y\in \g, u\in V$, by \eqref{rep-1} and \eqref{rep-2}, we have
 \begin{eqnarray*}
&&(\rho-\mu)(\{x,y\})+(\tilde{\rho}-\tilde{\mu})([x,y])-[(\rho-\mu)(x),(\tilde{\rho}-\tilde{\mu})(y)]+[(\rho-\mu)(y),(\tilde{\rho}-\tilde{\mu})(x)]\\
&=&\rho\{x,y\}-\mu\{x,y\}+\tilde{\rho}[x,y]-\tilde{\mu}[x,y]-[\rho(x),\tilde{\rho}(y)]+[\rho(x),\tilde{\mu}(y)]+[\mu(x),\tilde{\rho}(y)]\\
&&-[\mu(x),\tilde{\mu}(y)]+[\rho(y),\tilde{\rho}(x)]-[\rho(y),\tilde{\mu}(x)]-[\mu(y),\tilde{\rho}(x)]+[\mu(y),\tilde{\mu}(x)]\\
&=&\rho(x\ast y)-\rho(y\ast x)-\mu(x\ast y)+\mu(y\ast x)+\tilde{\rho}(x\cdot y)-\tilde{\rho}(y\cdot x)-\tilde{\mu}(x\cdot y)+\tilde{\mu}(y\cdot x)\\
&&-\rho(x)\tilde{\rho}(y)+\tilde{\rho}(y)\rho(x)+\rho(x)\tilde{\mu}(y)-\tilde{\mu}(y)\rho(x)+\mu(x)\tilde{\rho}(y)-\tilde{\rho}(y)\mu(x)-\mu(x)\tilde{\mu}(y)+\tilde{\mu}(y)\mu(x)\\
&&+\rho(y)\tilde{\rho}(x)-\tilde{\rho}(x)\rho(y)-\rho(y)\tilde{\mu}(x)+\tilde{\mu}(x)\rho(y)-\mu(y)\tilde{\rho}(x)+\tilde{\rho}(x)\mu(y)+\mu(y)\tilde{\mu}(x)-\tilde{\mu}(x)\mu(y)\\
&=&0,
\end{eqnarray*}
 which implies that $$(\rho-\mu)(\{x,y\})+(\tilde{\rho}-\tilde{\mu})([x,y])=[(\rho-\mu)(x),(\tilde{\rho}-\tilde{\mu})(y)]-[(\rho-\mu)(y),(\tilde{\rho}-\tilde{\mu})(x)].$$ This finishes the proof.
\end{proof}

\begin{defi}{\rm(\cite{Liu2})}
Let $(\huaG,[\cdot,\cdot],\dM_1)$ and $(\huaG,[\cdot,\cdot],\dM_2)$ be two differential graded Lie algebras, where $\huaG=\oplus_{i=0}^{\infty}{\g}_i$. We call the quadruple $(\huaG,[\cdot,\cdot],\dM_1,\dM_2)$ a {\bf bidifferential graded Lie algebra} if $\dM_1$ and $\dM_2$ satisfy
\begin{equation}
\dM_1\circ \dM_2+\dM_2\circ \dM_1=0.
\end{equation}
\end{defi}
\begin{defi}{\rm(\cite{Liu2})}
Let $(\huaG,[\cdot,\cdot],\dM_1,\dM_2)$ be a bidifferential graded Lie algebra. A pair $(P_1,P_2)\in {\g}_1\oplus {\g}_1$  is called a {\bf Maurer-Cartan element} of $(\huaG,[\cdot,\cdot],\dM_1,\dM_2)$ if $P_1$ and $P_2$ are Maurer-Cartan elements of the differential graded Lie algebra $(\huaG,[\cdot,\cdot],\dM_1)$ and $(\huaG,[\cdot,\cdot],\dM_2)$ respectively, such that
\begin{equation}
\dM_1P_2 +\dM_2P_1+[P_1,P_2]=0.
\end{equation}
\end{defi}
Let $(\huaG,[\cdot,\cdot])$ be a graded Lie algebra. It is obviously that $(\huaG,[\cdot,\cdot],\dM_1=0,\dM_2=0)$ is a bidifferential graded Lie algebra. Consider the graded Lie algebra $(C^*(\g;\g),[\cdot,\cdot]^{MN})$, we obtain the following main result.
\begin{thm}\label{thm-MC}
Let $\g$ be a vector space and $\pi_1,\pi_2\in \Hom(\otimes^2\g,\g)$. Then $(\g,\pi_1,\pi_2)$ is a compatible pre-Lie algebra if and only if $(\pi_1,\pi_2)$ is a Maurer-Cartan element of the bidifferential graded Lie algebra $(C^*(\g;\g),[\cdot,\cdot]^{MN},\dM_1=0,\dM_2=0)$.
\end{thm}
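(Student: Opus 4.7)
The plan is to unpack the Maurer-Cartan definition and then invoke Proposition \ref{linear} together with the fact, recorded just before the theorem, that a bilinear operation defines a pre-Lie structure if and only if its Matsushima-Nijenhuis self-bracket vanishes.

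First I would observe that, since $\dM_1 = \dM_2 = 0$, the pair $(\pi_1, \pi_2)$ is a Maurer-Cartan element of $(C^*(\g;\g), [\cdot,\cdot]^{MN}, 0, 0)$ exactly when the three equations
\[
[\pi_1, \pi_1]^{MN} = 0, \qquad [\pi_2, \pi_2]^{MN} = 0, \qquad [\pi_1, \pi_2]^{MN} = 0
\]
hold simultaneously. The first two are exactly the conditions for $\pi_1$ and $\pi_2$ to be pre-Lie structures on $\g$, by the quoted criterion. Hence the theorem reduces to showing that, given that $\pi_1$ and $\pi_2$ are pre-Lie, the compatibility identity \eqref{compatible-pre-Lie} is equivalent to the vanishing of the cross bracket $[\pi_1, \pi_2]^{MN}$.

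To obtain this equivalence cleanly I would invoke Proposition \ref{linear}: the triple $(\g, \pi_1, \pi_2)$ is compatible if and only if $k_1 \pi_1 + k_2 \pi_2$ is a pre-Lie structure for every $k_1, k_2 \in \mathbb{K}$. Applying the Matsushima-Nijenhuis criterion to the family $\pi = k_1 \pi_1 + k_2 \pi_2$ and using bilinearity together with the graded symmetry $[\pi_1, \pi_2]^{MN} = [\pi_2, \pi_1]^{MN}$ (both operators sit in degree $1$, so $-(-1)^{pq}$ in the defining commutator equals $+1$), one obtains
\[
k_1^2 [\pi_1, \pi_1]^{MN} + 2 k_1 k_2 [\pi_1, \pi_2]^{MN} + k_2^2 [\pi_2, \pi_2]^{MN} = 0
\]
for all $k_1, k_2$. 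Specializing at $(1, 0)$, $(0, 1)$, and $(1, 1)$ isolates each coefficient and shows this polynomial identity in $k_1, k_2$ is equivalent to the three Maurer-Cartan equations above, delivering both directions of the theorem.

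The main obstacle is nothing more than careful bookkeeping: verifying bilinearity and graded symmetry of $[\cdot,\cdot]^{MN}$ on degree-$1$ elements and keeping track of the factor $2$ on the cross term. As a direct sanity check one could instead expand $\pi_1 \circ \pi_2 + \pi_2 \circ \pi_1$ on arguments $(x, y, z)$ using the formula for $P \circ Q$ with $p = q = 1$, summing over the relevant unshuffles in each of the two terms, and match the resulting expression termwise against the left-hand side of \eqref{compatible-pre-Lie}; this route bypasses Proposition \ref{linear} at the cost of being purely combinatorial.
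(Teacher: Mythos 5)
Your proposal is correct, and its first half (unpacking the Maurer--Cartan condition with $\dM_1=\dM_2=0$ into $[\pi_1,\pi_1]^{MN}=[\pi_2,\pi_2]^{MN}=[\pi_1,\pi_2]^{MN}=0$ and identifying the first two equations with the pre-Lie condition) is exactly what the paper does. Where you diverge is the treatment of the cross equation: the paper simply expands $[\pi_1,\pi_2]^{MN}(x,y,z)$ from the definition of $P\circ Q$ with $p=q=1$ and observes that the resulting eight terms are literally the left-hand side of \eqref{compatible-pre-Lie}, so the compatibility identity is equivalent to $[\pi_1,\pi_2]^{MN}=0$; this is precisely the ``sanity check'' route you mention at the end. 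Your main route instead goes through Proposition \ref{linear} and polarization: writing $[k_1\pi_1+k_2\pi_2,k_1\pi_1+k_2\pi_2]^{MN}=k_1^2[\pi_1,\pi_1]^{MN}+2k_1k_2[\pi_1,\pi_2]^{MN}+k_2^2[\pi_2,\pi_2]^{MN}$ (your sign check for the symmetry $[\pi_1,\pi_2]^{MN}=[\pi_2,\pi_1]^{MN}$ in degree $1$ is right) and specializing at $(1,0)$, $(0,1)$, $(1,1)$. This buys a computation-free argument, but note two small costs: extracting the cross term from $2[\pi_1,\pi_2]^{MN}=0$ requires $2$ to be invertible in $\mathbb{K}$, whereas the paper's direct expansion is characteristic-independent and moreover records the explicit formula for $[\pi_1,\pi_2]^{MN}$, which is useful later; and Proposition \ref{linear} is only asserted as ``straightforward'' in the paper, and its proof is essentially the same polarization of the pre-Lie identity, so your route repackages rather than avoids that bookkeeping. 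Both arguments are complete and valid.
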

\begin{proof}
Obviously, $(\g,\pi_1)$ and $(\g,\pi_2)$ are pre-Lie algebra if and only if
\begin{equation*}
[\pi_1,\pi_1]^{MN}=0,\quad [\pi_2,\pi_2]^{MN}=0.
\end{equation*}
For all $x,y,z\in\g$, we have
\begin{eqnarray*}
[\pi_1,\pi_2]^{MN}(x,y,z)&=&\pi_1(\pi_2(x,y),z)-\pi_1(\pi_2(y,x),z)-\pi_1(x,\pi_2(y,z))+\pi_1(y,\pi_2(x,z))\\
&&+\pi_2(\pi_1(x,y),z)-\pi_2(\pi_1(y,x),z)-\pi_2(x,\pi_1(y,z))+\pi_2(y,\pi_1(x,z)),
\end{eqnarray*}
which implies that equation \eqref{compatible-pre-Lie} is equivalent to $[\pi_1,\pi_2]^{MN}=0$. Thus, $(\g,\pi_1,\pi_2)$ is a compatible pre-Lie algebra if and only if $(\pi_1,\pi_2)$ is a Maurer-Cartan element of the bidifferential graded Lie algebra $(C^*(\g;\g),[\cdot,\cdot]^{MN},\dM_1=0,\dM_2=0)$.
\end{proof}
Now, we give a new bidifferential graded Lie algebra that controls deformations of a compatible pre-Lie algebra.
\begin{pro}
Let $(\g,\pi_1,\pi_2)$ be a compatible pre-Lie algebra. Then $(C^*(\g;\g),[\cdot,\cdot]^{MN},\dM_{\pi_1},\dM_{\pi_2})$ is a bidifferential graded Lie algebra. Moreover, $(\g,\pi_1+\pi_1',\pi_2+\pi_2')$ is a compatible pre-Lie algebra for all $\pi'_1,\pi'_2\in \Hom(\otimes^2\g,\g)$ if and only if $(\pi'_1,\pi'_2)$ is a Maurer-Cartan element of the bidifferential graded Lie algebra $(C^*(\g;\g),[\cdot,\cdot]^{MN},\dM_{\pi_1},\dM_{\pi_2})$.
\end{pro}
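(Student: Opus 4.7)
The plan is to break the statement into three verifications and handle each via the graded Jacobi identity for $[\cdot,\cdot]^{MN}$ together with the Maurer-Cartan characterization given in Theorem \ref{thm-MC}. Namely, I need to check (i) $\dM_{\pi_1}^2 = \dM_{\pi_2}^2 = 0$, (ii) the anticommutation $\dM_{\pi_1}\circ\dM_{\pi_2} + \dM_{\pi_2}\circ\dM_{\pi_1} = 0$, and then (iii) translate the three Maurer-Cartan equations for $(\pi_1',\pi_2')$ in $(C^*(\g;\g),[\cdot,\cdot]^{MN},\dM_{\pi_1},\dM_{\pi_2})$ into the condition that $(\pi_1+\pi_1',\pi_2+\pi_2')$ is a compatible pre-Lie structure.

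For (i), both $(\g,\pi_1)$ and $(\g,\pi_2)$ are pre-Lie algebras, so the discussion recalled before Theorem \ref{thm-MC} immediately gives $[\pi_i,\pi_i]^{MN}=0$ and hence $d_{\pi_i}\circ d_{\pi_i}=0$ for $i=1,2$. For (ii), I would apply graded Jacobi to $\pi_1,\pi_2$ (both of degree $1$) and an arbitrary $P\in C^{*}(\g;\g)$:
\begin{equation*}
[\pi_1,[\pi_2,P]^{MN}]^{MN}+[\pi_2,[\pi_1,P]^{MN}]^{MN}=[[\pi_1,\pi_2]^{MN},P]^{MN},
\end{equation*}
where the sign $(-1)^{|\pi_1||\pi_2|}=-1$ makes the two left-hand terms add rather than cancel. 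Since $(\g,\pi_1,\pi_2)$ is compatible, Theorem \ref{thm-MC} yields $[\pi_1,\pi_2]^{MN}=0$, so the right-hand side vanishes, giving (ii). This proves the first sentence.

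For (iii), I would expand the three defining equations using bilinearity of $[\cdot,\cdot]^{MN}$. The identity
\begin{equation*}
[\pi_i+\pi_i',\pi_i+\pi_i']^{MN}=[\pi_i,\pi_i]^{MN}+2[\pi_i,\pi_i']^{MN}+[\pi_i',\pi_i']^{MN},
\end{equation*}
together with $[\pi_i,\pi_i]^{MN}=0$, shows that $\pi_i+\pi_i'$ is a pre-Lie structure iff $\dM_{\pi_i}\pi_i'+\tfrac{1}{2}[\pi_i',\pi_i']^{MN}=0$, i.e.\ iff $\pi_i'$ is a Maurer-Cartan element of the differential graded Lie algebra $(C^*(\g;\g),[\cdot,\cdot]^{MN},\dM_{\pi_i})$. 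Similarly, expanding $[\pi_1+\pi_1',\pi_2+\pi_2']^{MN}$ and using $[\pi_1,\pi_2]^{MN}=0$ together with the graded antisymmetry $[\pi_2,\pi_1']^{MN}=[\pi_1',\pi_2]^{MN}$ (both factors have degree $1$) reduces the compatibility condition for $(\pi_1+\pi_1',\pi_2+\pi_2')$ to
\begin{equation*}
\dM_{\pi_1}\pi_2'+\dM_{\pi_2}\pi_1'+[\pi_1',\pi_2']^{MN}=0,
\end{equation*}
which is precisely the third Maurer-Cartan equation in the bidifferential graded Lie algebra.

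The computations are essentially bookkeeping, so the main obstacle is purely the sign accounting: one must be careful that the degree $|\pi|=1$ of a pre-Lie product in the shifted convention $C^n=\Hom(\wedge^{n-1}\g\otimes\g,\g)$ produces the crucial $(-1)^{|\pi_1||\pi_2|}=-1$ in graded Jacobi (otherwise the two terms in (ii) would cancel and the statement would be empty). Once that sign is tracked correctly, all three parts follow from Theorem \ref{thm-MC} and the graded Jacobi identity, and no further structural input is required.
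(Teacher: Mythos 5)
Your proposal is correct and follows essentially the same route as the paper: use Theorem \ref{thm-MC} to get $[\pi_1,\pi_1]^{MN}=[\pi_2,\pi_2]^{MN}=[\pi_1,\pi_2]^{MN}=0$, derive the anticommutation $\dM_{\pi_1}\circ\dM_{\pi_2}+\dM_{\pi_2}\circ\dM_{\pi_1}=0$ from the graded Jacobi identity, and then expand the Maurer-Cartan equations for $(\pi_1+\pi_1',\pi_2+\pi_2')$ by bilinearity. Your sign bookkeeping (degree $1$ elements, so the mixed Jacobi terms add and $[\pi_2,\pi_1']^{MN}=[\pi_1',\pi_2]^{MN}$) matches the paper's computation.
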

\begin{proof}
Since $(\g,\pi_1,\pi_2)$ is a compatible pre-Lie algebra, by Theorem \ref{thm-MC}, $(\pi_1,\pi_2)$ is a Maurer-Cartan element of the bidifferential graded Lie algebra $(C^*(\g;\g),[\cdot,\cdot]^{MN},\dM_1=0,\dM_2=0)$. Thus, we have
\begin{equation*}
[\pi_1,\pi_1]^{MN}=0,\quad [\pi_2,\pi_2]^{MN}=0, \quad [\pi_1,\pi_2]^{MN}=0.
\end{equation*}
Thus, $(C^*(\g;\g),[\cdot,\cdot]^{MN},\dM_{\pi_1})$ and $(C^*(\g;\g),[\cdot,\cdot]^{MN},\dM_{\pi_2})$ are differential graded Lie algebras. For all $P\in C^{p+1}(\g,\g)$, by graded Jacobi identity, we have
\begin{equation*}
\dM_{\pi_1}(\dM_{\pi_2}P)+\dM_{\pi_2}(\dM_{\pi_1}P)=[\pi_1,[\pi_2,P]^{MN}]^{MN}+[\pi_2,[\pi_1,P]^{MN}]^{MN}=[[\pi_1,\pi_2]^{MN},p]^{MN}=0,
\end{equation*}
which implies that $\dM_{\pi_1}\circ \dM_{\pi_2}+\dM_{\pi_2}\circ\dM_{\pi_1}=0$.
Thus, $(C^*(\g;\g),[\cdot,\cdot]^{MN},\dM_{\pi_1},\dM_{\pi_2})$ is a bidifferential graded Lie algebra.

If $(\g,\pi_1+\pi_1',\pi_2+\pi_2')$ is a compatible pre-Lie algebra, by Theorem \ref{thm-MC}, $(\pi_1+\pi_1',\pi_2+\pi_2')$ is a Maurer-Cartan element of the bidifferential graded Lie algebra $(C^*(\g;\g),[\cdot,\cdot]^{MN},\dM_1=0,\dM_2=0)$. Thus, we have
\begin{eqnarray}
\label{MC-1}[\pi_1+\pi_1',\pi_1+\pi_1']^{MN}&=&0,\\
\label{MC-2}[\pi_2+\pi_2',\pi_2+\pi_2']^{MN}&=&0,\\
\label{MC-3}[\pi_1+\pi_1',\pi_2+\pi_2']^{MN}&=&0,
\end{eqnarray}
By \eqref{MC-1}, \eqref{MC-2} and  \eqref{MC-3}, we have
\begin{eqnarray*}
\dM_{\pi_1}\pi_1'+\frac{1}{2}[\pi_1',\pi_1']^{MN}&=&0,\\
\dM_{\pi_2}\pi_2'+\frac{1}{2}[\pi_2',\pi_2']^{MN}&=&0,\\
\dM_{\pi_1}\pi_2'+\dM_{\pi_2}\pi_1'+[\pi_1',\pi_2']^{MN}&=&0.
\end{eqnarray*}
Thus, $(\pi'_1,\pi'_2)$ is a Maurer-Cartan element of  $(C^*(\g;\g),[\cdot,\cdot]^{MN},\dM_{\pi_1},\dM_{\pi_2})$. The converse part can be proved similarly. We omit details. The proof is finished.
\end{proof}
\section{Formal deformations of compatible pre-Lie algebras}\label{sec:infinitesimal-deformation}
In this section, first, we introduce a cohomology of a compatible pre-Lie algebra with coefficients in itself. Then, we study infinitesimal deformations of compatible pre-Lie algebras using this cohomology, we show that equivalent infinitesimal deformations are in the same second cohomology group. We give the notion of a Nijenhuis operator on a compatible pre-Lie algebra and show that a Nijenhuis operator gives rise to a trivial deformation. Finally, we study formal deformations of compatible pre-Lie algebras. If the second cohomology group $\huaH^2(\g;\g)$ is trivial, then the compatible pre-Lie algebra is rigid.
\subsection{Cohomologies of compatible pre-Lie algebras}
\
\newline
\indent
Let $(\g,\pi)$ be a pre-Lie algebra, where $\pi(x,y)=x\cdot y$. Because of the graded Jacobi identity, we define a coboundary operator $\delta_{\pi}:C^n(\g,\g)\longrightarrow C^{n+1}(\g,\g)$ by
\begin{equation*}
\delta_{\pi} f=(-1)^{n-1}[\pi,f]^{MN},\quad \forall f\in C^n(\g;\g).
\end{equation*}
Thus, we obtain a cochain complex $(\oplus_{n=1}^{+\infty}C^n(\g;\g),\delta_{\pi})$. More precisely, for all $x_1,\dots,x_{n+1}\in \g$, we have
\begin{eqnarray*}
(\delta_{\pi} f)(x_1,\dots,x_{n+1})&=&\sum_{i=1}^n(-1)^{i+1}x_i\cdot f(x_1,\dots,\hat{x_i},\dots,x_{n+1})\\
&&+\sum_{i=1}^n(-1)^{i+1}f(x_1,\dots,\hat{x_i},\dots,x_n,x_i)\cdot x_{n+1}\\
&&-\sum_{i=1}^n(-1)^{i+1}f(x_1,\dots,\hat{x_i}\dots,x_n,x_i\cdot x_{n+1})\\
&&+\sum_{1\leq i<j\leq n}(-1)^{i+j} f([x_i,x_j]_C,x_1,\dots,\hat{x_i},\dots,\hat{x_j},\dots,x_{n+1}),
\end{eqnarray*}
which is a coboundary operator of pre-Lie algebra $(\g,\pi)$ with coefficients in the regular representation $(V,L,R)$. We can see more details in \cite{DA}.

Let $(\g,\cdot,\ast)$ be a compatible pre-Lie algebra with $\pi_1(x,y)=x\cdot y$ , $\pi_2(x,y)=x\ast y$. We define the set of $n$-cochains $(n\geq1)$ by
\begin{equation*}
\mathfrak{C}^n(\g;\g)=\underbrace{C^n(\g;\g)\oplus C^n(\g;\g)\oplus\cdots \oplus C^n(\g;\g)}_{n~copies}.
\end{equation*}
Define the operator $\delta:\mathfrak{C}^n(\g;\g)\longrightarrow \mathfrak{C}^{n+1}(\g;\g)$ by
\begin{eqnarray*}
\delta^1 f&=&(\delta_{\pi_1}f,\delta_{\pi_2}f),\quad
 \forall f \in  \Hom(\g,\g), n=1,\\
\delta^n (f_1,\dots,f_n)&=&(\delta_{\pi_1}f_1,\dots, \underbrace{\delta_{\pi_2} f_{i-1}+\delta_{\pi_1} f_i}_{i},\dots,\delta_{\pi_2} f_n),\quad
 \forall (f_1,\dots,f_n) \in \mathfrak{C}^n(\g,\g), 2\leq i\leq n.
\end{eqnarray*}

\begin{thm}\label{coboundary-operaor}
The operator  $\delta:\mathfrak{C}^n(\g;\g)\longrightarrow \mathfrak{C}^{n+1}(\g;\g)$ defined as above satisfies $\delta\circ\delta=0$.
\end{thm}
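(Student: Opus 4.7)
The plan is to reduce the identity $\delta\circ\delta=0$ to three basic identities on the single-operator coboundaries:
\begin{equation*}
\delta_{\pi_1}\circ\delta_{\pi_1}=0,\qquad \delta_{\pi_2}\circ\delta_{\pi_2}=0,\qquad \delta_{\pi_1}\circ\delta_{\pi_2}+\delta_{\pi_2}\circ\delta_{\pi_1}=0,
\end{equation*}
and then verify $\delta\circ\delta=0$ componentwise by tracking the bookkeeping.

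First I would recall that, by the preceding proposition, $(C^*(\g;\g),[\cdot,\cdot]^{MN},\dM_{\pi_1},\dM_{\pi_2})$ is a bidifferential graded Lie algebra, so $\dM_{\pi_i}^2=0$ and $\dM_{\pi_1}\dM_{\pi_2}+\dM_{\pi_2}\dM_{\pi_1}=0$. Since $\delta_{\pi_i}f=(-1)^{n-1}[\pi_i,f]^{MN}$ for $f\in C^n(\g;\g)$, a quick sign check using the graded Jacobi identity (with $\pi_1,\pi_2$ of degree $1$) translates the three bracket identities $[\pi_1,\pi_1]^{MN}=0$, $[\pi_2,\pi_2]^{MN}=0$, $[\pi_1,\pi_2]^{MN}=0$ into exactly the three operator identities above. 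Concretely, for $f\in C^n(\g;\g)$,
\begin{equation*}
\delta_{\pi_1}\delta_{\pi_2}f+\delta_{\pi_2}\delta_{\pi_1}f=-\bigl([\pi_1,[\pi_2,f]^{MN}]^{MN}+[\pi_2,[\pi_1,f]^{MN}]^{MN}\bigr)=-[[\pi_1,\pi_2]^{MN},f]^{MN}=0.
\end{equation*}

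Next I would compute $\delta^{n+1}\circ\delta^n$ entry by entry. Write $\delta^n(f_1,\dots,f_n)=(g_1,\dots,g_{n+1})$ where $g_1=\delta_{\pi_1}f_1$, $g_{n+1}=\delta_{\pi_2}f_n$, and $g_i=\delta_{\pi_2}f_{i-1}+\delta_{\pi_1}f_i$ for $2\leq i\leq n$. Then the first component of $\delta^{n+1}(g_1,\dots,g_{n+1})$ is $\delta_{\pi_1}g_1=\delta_{\pi_1}^2 f_1=0$, and the last is $\delta_{\pi_2}g_{n+1}=\delta_{\pi_2}^2 f_n=0$. For an interior position $2\leq j\leq n+1$, the entry is $\delta_{\pi_2}g_{j-1}+\delta_{\pi_1}g_j$, and substituting the formula for $g_{j-1},g_j$ yields (in the generic case $3\leq j\leq n$)
\begin{equation*}
\delta_{\pi_2}^2 f_{j-2}+(\delta_{\pi_2}\delta_{\pi_1}+\delta_{\pi_1}\delta_{\pi_2})f_{j-1}+\delta_{\pi_1}^2 f_j=0,
\end{equation*}
with the boundary cases $j=2$ and $j=n+1$ dropping the $\delta_{\pi_2}^2 f_{j-2}$ or $\delta_{\pi_1}^2 f_j$ terms respectively and still collapsing to zero for the same reasons.

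There is no real obstacle; the only care needed is the sign check converting the bracket identities of the bidifferential graded Lie algebra into the operator identities for $\delta_{\pi_1},\delta_{\pi_2}$, and the slightly awkward indexing of the end-components $g_1$ and $g_{n+1}$ (which, by the design of $\delta$, differ from the interior components). Once those are handled, the verification is a one-line substitution at each position.
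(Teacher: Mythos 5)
Your proposal is correct and follows essentially the same route as the paper: both reduce $\delta\circ\delta=0$ to the three Maurer--Cartan identities $[\pi_1,\pi_1]^{MN}=[\pi_2,\pi_2]^{MN}=[\pi_1,\pi_2]^{MN}=0$ via the graded Jacobi identity and then verify the vanishing componentwise, with the boundary components handled separately. Your only cosmetic difference is that you first package the bracket identities as the operator relations $\delta_{\pi_i}^2=0$ and $\delta_{\pi_1}\delta_{\pi_2}+\delta_{\pi_2}\delta_{\pi_1}=0$ before substituting, whereas the paper expands the brackets directly in each component.
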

\begin{proof}
By Theorem \ref{thm-MC}, we obtain that $(\pi_1,\pi_2)$ is a Maurer-Cartan element of the bidifferential graded Lie algebra $(C^*(\g;\g),[\cdot,\cdot]^{MN},\dM_1=0,\dM_2=0)$. Thus, by the fact that $[\pi_1,\pi_1]^{MN}=[\pi_2,\pi_2]^{MN}=[\pi_1,\pi_2]^{MN}=0$ and the graded Jacobi identity, for all $f \in  \Hom(\g,\g)$, we have
 \begin{eqnarray*}
&&\delta^2(\delta^1f)\\
&=&\delta^2([\pi_1,f]^{MN},[\pi_2,f]^{MN})\\
&=&-([\pi_1,[\pi_1,f]^{MN}]^{MN},[\pi_2,[\pi_1,f]^{MN}]^{MN}+[\pi_1,[\pi_2,f]^{MN}]^{MN},[\pi_2,[\pi_2,f]^{MN}]^{MN})\\
&=&-(\frac{1}{2}[[\pi_1,\pi_1]^{MN},f]^{MN},[[\pi_1,\pi_2]^{MN},f]^{MN},\frac{1}{2}[[\pi_2,\pi_2]^{MN},f]^{MN})\\
&=&(0,0,0).
\end{eqnarray*}
By $[\pi_1,\pi_1]^{MN}=[\pi_2,\pi_2]^{MN}=[\pi_1,\pi_2]^{MN}=0$ and the graded Jacobi identity, for all $(f_1,\dots,f_n) \in \mathfrak{C}^n(\g,\g), 2\leq i\leq n$, we have
 \begin{eqnarray*}
&&\delta^{n+1}\delta^n(f_1,\dots,f_n)\\
&=&(-1)^{n-1}\delta^{n+1}([\pi_1,f_1]^{MN},\dots, \underbrace{[\pi_2, f_{i-1}]^{MN}+[\pi_1, f_i]^{MN}}_{i},\dots,[\pi_2, f_n]^{MN})\\
&=&-([\pi_1,[\pi_1,f_1]^{MN}]^{MN},[\pi_2,[\pi_1,f_1]^{MN}]^{MN}+[\pi_1,[\pi_2,f_1]^{MN}]^{MN}+[\pi_1,[\pi_1,f_2]^{MN}]^{MN},\dots,\\
&& \underbrace{[\pi_2,[\pi_2,f_{i-2}]^{MN}]^{MN}+[\pi_2,[\pi_1,f_{i-1}]^{MN}]^{MN}+[\pi_1,[\pi_2,f_{i-1}]^{MN}]^{MN}+[\pi_1,[\pi_1,f_i]^{MN}]^{MN}}_{3\leq i\leq n-1},\dots,\\
&&[\pi_2,[\pi_2,f_{n-1}]^{MN}]^{MN}+[\pi_2,[\pi_1,f_n]^{MN}]^{MN}+[\pi_1,[\pi_2,f_n]^{MN}]^{MN},[\pi_2,[\pi_2,f_n]^{MN}]^{MN})\\
&=&-(\frac{1}{2}[[\pi_1,\pi_1]^{MN},f_1]^{MN},[[\pi_1,\pi_2]^{MN},f_1]^{MN}+\frac{1}{2}[[\pi_1,\pi_1]^{MN},f_2]^{MN},\dots,\\
&& \underbrace{\frac{1}{2}[[\pi_2,\pi_2]^{MN},f_{i-2}]^{MN}+[[\pi_1,\pi_2]^{MN},f_{i-1}]^{MN}+\frac{1}{2}[[\pi_1,\pi_1]^{MN},f_i]^{MN}}_{3\leq i\leq n-1},\dots,\\
&&\frac{1}{2}[[\pi_2,\pi_2]^{MN},f_{n-1}]^{MN}+[[\pi_1,\pi_2]^{MN},f_n]^{MN},\frac{1}{2}[[\pi_2,\pi_2]^{MN},f_n]^{MN})\\
&=&(0,0,\dots,0).
\end{eqnarray*}
Thus, we have $\delta\circ\delta=0$.
\end{proof}
\begin{defi}
Let $(\g,\cdot,\ast)$ be a compatible pre-Lie algebra. The cohomology of the cochain complex $(\oplus_{n=1}^{+\infty}\mathfrak{C}^n(\g;\g),\delta)$ is called the cohomology of $(\g,\cdot,\ast)$. The corresponding $n$-th cohomology group is denoted by $\huaH^n(\g;\g)$.
\end{defi}

\subsection{Infinitesimal deformations of compatible pre-Lie algebras}
\begin{defi}
Let $(\g,\cdot,\ast)$ be a compatible pre-Lie algebra and $(\omega_1,\omega_2)\in \mathfrak{C}^2(\g,\g)$. Define
\begin{equation*}
x\cdot_t y=x\cdot y+t\omega_1(x,y),\quad x\ast_t y=x\ast y+t\omega_2(x,y),\quad \forall x,y\in \g.
\end{equation*}
If for all $t\in \mathbb{K}$, $(\g,\cdot_t,\ast_t)$ is still a compatible pre-Lie algebra, then we say that $(\omega_1,\omega_2)$ generates an infinitesimal deformation of $(\g,\cdot,\ast)$.
\end{defi}
It is straightforward to verify that $(\omega_1,\omega_2)$ generates an infinitesimal deformation of a compatible pre-Lie algebra $(\g,\cdot,\ast)$ if and only if for all $k_1,k_2\in \mathbb{K}$, $k_1\omega_1+k_2\omega_2$ generates an infinitesimal deformation of the pre-Lie algebra $(\g,\diamond)$, where $``\diamond$'' is given by \eqref{compatible-product}.

By Theorem \ref{thm-MC}, $(\omega_1,\omega_2)$ generates an infinitesimal deformation of a compatible pre-Lie algebra $(\g,\pi_1,\pi_2)$, where $x\cdot y=\pi_1(x,y)$ and $x\ast y=\pi_2(x,y)$ if and only if
\begin{align}
\label{deformation-1}&[\pi_1,\omega_1]^{MN}=0,\quad [\pi_2,\omega_2]^{MN}=0, \quad [\pi_1,\omega_2]^{MN}+[\pi_2,\omega_1]^{MN}=0,\\
\label{deformation-2}&[\omega_1,\omega_1]^{MN}=0,\quad [\omega_2,\omega_2]^{MN}=0,\quad [\omega_1,\omega_2]^{MN}=0.
\end{align}
Obviously, \eqref{deformation-1} means that $(\omega_1,\omega_2)$ is a $2$-cocycle of the compatible pre-Lie algebra $(\g,\cdot,\ast)$, i.e. $\delta(\omega_1,\omega_2)=0$. \eqref{deformation-2} means that $(\g,\omega_1,\omega_2)$ is a compatible pre-Lie algebra.
\begin{thm}
With the above notation, $(\omega_1,\omega_2)$ is a $2$-cocycle of the compatible pre-Lie algebra $(\g,\cdot,\ast)$.
\end{thm}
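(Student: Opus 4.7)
The plan is essentially to unpack the definition of the coboundary operator $\delta^2$ on $\mathfrak{C}^2(\g;\g)$ and read off that each of its three components vanishes by the Maurer--Cartan-type conditions already noted in \eqref{deformation-1}. So the argument is short: the substantive work (namely the characterization of infinitesimal deformations in terms of $[\pi_i,\omega_j]^{MN}$) has already been done in the discussion immediately preceding the theorem.

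More concretely, first I would recall that $(\omega_1,\omega_2)$ generates an infinitesimal deformation of $(\g,\pi_1,\pi_2)$, so by the discussion following Theorem \ref{thm-MC} applied to $\pi_i+t\omega_i$, the conditions \eqref{deformation-1} and \eqref{deformation-2} hold. I then specialize the definition of $\delta$ to $n=2$: for $(f_1,f_2)\in \mathfrak{C}^2(\g;\g)$ one has
\begin{equation*}
\delta^2(f_1,f_2)=\bigl(\delta_{\pi_1}f_1,\ \delta_{\pi_2}f_1+\delta_{\pi_1}f_2,\ \delta_{\pi_2}f_2\bigr).
\end{equation*}
Next I would substitute $f_i=\omega_i$ and rewrite each component using $\delta_{\pi}f=(-1)^{n-1}[\pi,f]^{MN}$ for $f\in C^n(\g;\g)$, which at $n=2$ gives $\delta_{\pi}\omega_i=-[\pi,\omega_i]^{MN}$. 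The three resulting expressions are precisely the negatives of the three brackets appearing in \eqref{deformation-1}, which vanish. Hence $\delta^2(\omega_1,\omega_2)=0$, so $(\omega_1,\omega_2)$ is a $2$-cocycle of the compatible pre-Lie algebra $(\g,\cdot,\ast)$.

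There is no real obstacle here; the only thing to be careful about is bookkeeping the sign from the factor $(-1)^{n-1}$ in the definition of $\delta_{\pi}$ and the indexing convention for $\delta^n$ (the $i$-th slot contains $\delta_{\pi_2}f_{i-1}+\delta_{\pi_1}f_i$). Once one fixes these conventions the verification is immediate, and in fact can be phrased even more cleanly by invoking Theorem \ref{thm-MC} and the preceding paragraph's interpretation of \eqref{deformation-1} as the statement $\delta(\omega_1,\omega_2)=0$.
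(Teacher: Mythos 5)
Your proposal is correct and is essentially the paper's own argument: the paper proves this theorem implicitly in the paragraph preceding it, by deriving \eqref{deformation-1}--\eqref{deformation-2} from Theorem \ref{thm-MC} applied to $(\pi_1+t\omega_1,\pi_2+t\omega_2)$ and observing that \eqref{deformation-1} is exactly the statement $\delta(\omega_1,\omega_2)=0$. Your unpacking of $\delta^2$ and the sign $(-1)^{n-1}$ just makes that observation explicit, so there is nothing to add.
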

\begin{defi}
Two infinitesimal deformations $(\g,\cdot_t,\ast_t)$ and $(\g',\cdot'_t,\ast'_t)$ generated by $(\omega_1,\omega_2)$ and $(\omega'_1,\omega'_2)$ respectively are said to be {\bf equivalent} if there exists a linear operator $N\in\gl(\g)$ such that $\Id+tN$ is a compatible pre-Lie algebra homomorphism from $(\g',\cdot'_t,\ast'_t)$ to $(\g,\cdot_t,\ast_t)$.
\end{defi}
Two infinitesimal deformations $(\g,\cdot_t,\ast_t)$ and $(\g',\cdot'_t,\ast'_t)$ generated by $(\omega_1,\omega_2)$ and $(\omega'_1,\omega'_2)$ respectively are equivalent if and only if for all $x,y\in \g$, the following equalities hold:
\begin{eqnarray}
\label{equi-deformation-1}\omega'_1(x,y)-\omega_1(x,y)&=&N(x)\cdot y+x\cdot N(y)-N(x\cdot y),\\
\label{equi-deformation-2}N(\omega'_1(x,y))&=&\omega_1(x,N(y))+\omega_1(N(x),y)+N(x)\cdot N(y),\\
\label{equi-deformation-3}\omega_1(N(x),N(y))&=&0,\\
\label{equi-deformation-4}\omega'_2(x,y)-\omega_2(x,y)&=&N(x)\ast y+x\ast N(y)-N(x\ast y),\\
\label{equi-deformation-5}N(\omega'_2(x,y))&=&\omega_2(x,N(y))+\omega_2(N(x),y)+N(x)\ast N(y),\\
\label{equi-deformation-6}\omega_2(N(x),N(y))&=&0.
\end{eqnarray}
Note that \eqref{equi-deformation-1} and \eqref{equi-deformation-4} means that $(\omega'_1-\omega_1,\omega'_2-\omega_2)=\delta N=([\pi_1,N]^{MN},[\pi_2,N]^{MN})$. Thus, we have
\begin{thm}
Let $(\g,\cdot,\ast)$ be a compatible pre-Lie algebra. If two infinitesimal deformations $(\g,\cdot_t,\ast_t)$ and $(\g',\cdot'_t,\ast'_t)$ generated by $(\omega_1,\omega_2)$ and $(\omega'_1,\omega'_2)$ respectively are equivalent, then $(\omega_1,\omega_2)$ and $(\omega'_1,\omega'_2)$ are in the same cohomology class of $\huaH^2(\g;\g).$
\end{thm}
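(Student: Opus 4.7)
The plan is to read off the claim directly from the equivalence equations \eqref{equi-deformation-1}--\eqref{equi-deformation-6} that were already extracted just before the theorem statement, and then reinterpret the relevant ones as a coboundary relation in the complex $(\oplus_{n=1}^{+\infty}\mathfrak{C}^n(\g;\g),\delta)$.

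First I would recall the definition of the cochain differential in degree one: for $N \in \Hom(\g,\g)=\mathfrak{C}^1(\g;\g)$ one has $\delta^1 N = (\delta_{\pi_1}N,\delta_{\pi_2}N) = ([\pi_1,N]^{MN},[\pi_2,N]^{MN})$. By the explicit formula for $\delta_{\pi}$ on $1$-cochains derived from the Matsushima--Nijenhuis bracket, this evaluates on $x,y \in \g$ to
\begin{eqnarray*}
(\delta_{\pi_1}N)(x,y) &=& N(x)\cdot y + x\cdot N(y) - N(x\cdot y),\\
(\delta_{\pi_2}N)(x,y) &=& N(x)\ast y + x\ast N(y) - N(x\ast y).
\end{eqnarray*}
These are precisely the right-hand sides of \eqref{equi-deformation-1} and \eqref{equi-deformation-4}.

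Next I would use the hypothesis: since the infinitesimal deformations generated by $(\omega_1,\omega_2)$ and $(\omega'_1,\omega'_2)$ are equivalent, there exists $N\in\gl(\g)$ such that $\Id+tN$ intertwines both $\cdot'_t$ with $\cdot_t$ and $\ast'_t$ with $\ast_t$. Expanding the homomorphism conditions $(\Id+tN)(x\cdot'_t y)=(\Id+tN)(x)\cdot_t(\Id+tN)(y)$ and $(\Id+tN)(x\ast'_t y)=(\Id+tN)(x)\ast_t(\Id+tN)(y)$ as polynomials in $t$ and comparing coefficients of $t$ yields exactly equations \eqref{equi-deformation-1} and \eqref{equi-deformation-4} (while the coefficients of $t^2$ and $t^3$ give the remaining identities \eqref{equi-deformation-2}, \eqref{equi-deformation-3}, \eqref{equi-deformation-5}, \eqref{equi-deformation-6}, which are not needed here).

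Combining the previous two steps, equations \eqref{equi-deformation-1} and \eqref{equi-deformation-4} can be rewritten as
\begin{equation*}
\omega'_1-\omega_1 = \delta_{\pi_1}N, \qquad \omega'_2-\omega_2 = \delta_{\pi_2}N,
\end{equation*}
so that $(\omega'_1-\omega_1,\omega'_2-\omega_2)=\delta^1 N$ is a $2$-coboundary of the compatible pre-Lie algebra $(\g,\cdot,\ast)$. Since $(\omega_1,\omega_2)$ and $(\omega'_1,\omega'_2)$ are both $2$-cocycles by the preceding theorem, this shows that they represent the same class in $\huaH^2(\g;\g)$, which is the desired conclusion. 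The argument is essentially bookkeeping; the only mildly delicate point is to double-check the signs in $\delta^1$ against the Matsushima--Nijenhuis sign convention $\delta_{\pi}f=(-1)^{n-1}[\pi,f]^{MN}$, which in degree $n=1$ becomes $+[\pi,f]^{MN}$ and matches \eqref{equi-deformation-1}--\eqref{equi-deformation-4} on the nose.
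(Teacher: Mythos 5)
Your proposal is correct and follows essentially the same route as the paper: the paper also expands the homomorphism conditions for $\Id+tN$ to obtain \eqref{equi-deformation-1}--\eqref{equi-deformation-6} and then observes that \eqref{equi-deformation-1} and \eqref{equi-deformation-4} say exactly $(\omega'_1-\omega_1,\omega'_2-\omega_2)=\delta N=([\pi_1,N]^{MN},[\pi_2,N]^{MN})$, so the two $2$-cocycles differ by a coboundary. Your sign check of $\delta_\pi f=(-1)^{n-1}[\pi,f]^{MN}$ in degree $n=1$ and the identification of the $t$, $t^2$, $t^3$ coefficients match the paper's computation.
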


\begin{defi}{\rm(\cite{QW})}
Let $(\g,\cdot)$ be a pre-Lie algebra. A linear operator $N\in \gl(\g)$ is called a {\bf Nijenhuis operator} on $(\g,\cdot)$ if for all $x,y\in \g$
      \begin{equation*}
        N(x)\cdot N(y)=N(x\cdot_N y),
        \end{equation*}
        where the product $``\cdot_N$'' is defined by
\begin{equation*}
x\cdot_N y\triangleq  N(x)\cdot y+x\cdot N(y)-N(x\cdot y).
\end{equation*}
\end{defi}

\begin{pro}{\rm(\cite{QW})}\label{deform-Nij}
Let $N$ be a Nijenhuis operator on a pre-Lie algebra $(\g,\cdot)$, then $(\g,\cdot_N)$ is a pre-Lie algebra, and $N$ is a homomorphism from $(\g,\cdot_N)$ to $(\g,\cdot)$.
\end{pro}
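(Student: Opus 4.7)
The plan is to dispatch the two assertions of the proposition separately, since they are of quite different character. The homomorphism claim $N(x \cdot_N y) = N(x) \cdot N(y)$ is literally the defining equation of a Nijenhuis operator, so once $\cdot_N$ is known to be pre-Lie, the second half of the statement is tautological; essentially all the work lies in verifying the pre-Lie identity for $\cdot_N$.

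My main route for the pre-Lie identity is a direct expansion. Starting from
\[
(x \cdot_N y) \cdot_N z - x \cdot_N (y \cdot_N z) = (y \cdot_N x) \cdot_N z - y \cdot_N (x \cdot_N z),
\]
I will expand each product via $a \cdot_N b = N(a) \cdot b + a \cdot N(b) - N(a \cdot b)$, using the Nijenhuis relation $N(a) \cdot N(b) = N(a \cdot_N b)$ whenever an outer $N$ meets an inner $\cdot_N$ so that no iterated $\cdot_N$ survives on the right-hand side of the expansion. Sorting the resulting terms by how many $N$'s appear explicitly, the ``three-$N$'' summands collapse by the pre-Lie identity for $\cdot$ applied to the triple $(N(x), N(y), N(z))$; the mixed summands with two or one $N$ factor into $N$-images of associators of $\cdot$ (with some arguments replaced by $N$-applied ones), each of which is symmetric in the first two slots by the pre-Lie identity for $\cdot$, which is exactly what is needed once the swap $x \leftrightarrow y$ is performed on the right-hand side.

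The main obstacle is bookkeeping rather than conceptual: each of $(x \cdot_N y) \cdot_N z$ and $x \cdot_N (y \cdot_N z)$ expands into seven summands, so the full identity entails tracking of the order of twenty-eight terms and invoking the pre-Lie axiom for $\cdot$ at several distinct triples of arguments. A conceptually cleaner alternative, which I would record in parallel, is to observe that $a \cdot_N b = [\pi, N]^{MN}(a, b)$ in the Matsushima-Nijenhuis calculus already used in the paper; then $(\g, \cdot_N)$ being pre-Lie is equivalent to $[[\pi, N]^{MN}, [\pi, N]^{MN}]^{MN} = 0$, and two applications of the graded Jacobi identity together with $[\pi, \pi]^{MN} = 0$ reduce this to a statement about $[N, [\pi, N]^{MN}]^{MN}$, which encodes precisely the Nijenhuis torsion and so vanishes by hypothesis. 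Either path works; the direct expansion is elementary but tedious, while the graded route is shorter but needs one careful graded-Jacobi manipulation.
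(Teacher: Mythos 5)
The paper does not prove this proposition at all: it is quoted from \cite{QW}, so there is no in-paper argument to compare against. Judged on its own merits, your main route (direct expansion of the $\cdot_N$-associator, using the Nijenhuis identity to eliminate $N(a\cdot_N b)$, then invoking the pre-Lie identity of $\cdot$ at various triples) is correct and is essentially the standard proof of this fact; the homomorphism claim is indeed tautological once $\cdot_N$ is pre-Lie. Two points of precision. First, in the direct expansion the Nijenhuis identity is needed not only in the forward direction you describe: after antisymmetrizing in $x,y$, the surviving ``two-$N$'' terms of the form $N(x)\cdot N(y\cdot z)$ and $N(x\cdot y)\cdot N(z)$ are not $N$-images of associators of $\cdot$; they must be rewritten via $N(a)\cdot N(b)=N\bigl(N(a)\cdot b+a\cdot N(b)-N(a\cdot b)\bigr)$ applied to the pairs $(x,y\cdot z)$ and $(x\cdot y,z)$, after which everything collapses to $N^2$ applied to the antisymmetrized associator of $\cdot$, which vanishes. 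So the cancellation uses the Nijenhuis condition twice, not only the pre-Lie axiom, and your bookkeeping description should reflect that. Second, in your graded alternative the assertion that $[N,[\pi,N]^{MN}]^{MN}$ ``encodes precisely the Nijenhuis torsion and so vanishes by hypothesis'' is not correct as stated: a computation gives
\begin{equation*}
[[\pi,N]^{MN},N]^{MN}(x,y)=2\bigl(N(x)\cdot N(y)-N(x\cdot_N y)\bigr)+[\pi,N^2]^{MN}(x,y),
\end{equation*}
so for a Nijenhuis operator this bracket equals $[\pi,N^2]^{MN}$, which is generally nonzero. The argument still closes because the graded Jacobi identity yields $[\pi_N,\pi_N]^{MN}=-[\pi,[[\pi,N]^{MN},N]^{MN}]^{MN}$ with $\pi_N=[\pi,N]^{MN}$, and the extra term dies only at the last step via $[\pi,[\pi,N^2]^{MN}]^{MN}=\tfrac{1}{2}[[\pi,\pi]^{MN},N^2]^{MN}=0$. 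With these two corrections both of your routes are sound; neither defect is a fatal gap, but as written the sketch overstates what vanishes.
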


\begin{defi}
Let $(\g,\cdot,\ast)$ be a compatible pre-Lie algebra. A linear operator $N\in \gl(\g)$ is called a {\bf Nijenhuis operator} on $(\g,\cdot,\ast)$ if $N$ is both a Nijenhuis operator on the pre-Lie algebra $(\g,\cdot)$ and a Nijenhuis operator on the pre-Lie algebra $(\g,\ast)$.
\end{defi}

\begin{pro}\label{compatible-Nijenhuis}
Let $(\g,\cdot,\ast)$ be a compatible pre-Lie algebra and $N\in \gl(\g)$ a linear map. Then $N$ is a Nijenhuis operator on the compatible pre-Lie algebra $(\g,\cdot,\ast)$ if and only if $N$ is a Nijenhuis operator on the pre-Lie algebra $(\g,\diamond)$, where $``\diamond$'' is given by \eqref{compatible-product}.
\end{pro}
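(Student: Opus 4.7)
The plan is to unwind the definitions on both sides and reduce the proof to an algebraic identity relating $\diamond_N$ to $\cdot_N$ and $\ast_N$. By Proposition \ref{linear}, for any $k_1,k_2\in \mathbb{K}$ the product $x\diamond y = k_1 x\cdot y + k_2 x\ast y$ is a pre-Lie structure on $\g$, so it makes sense to ask whether $N$ is a Nijenhuis operator on $(\g,\diamond)$. I will interpret the statement as: $N$ is Nijenhuis on $(\g,\diamond)$ for every choice of $k_1,k_2\in \mathbb{K}$.

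For the forward implication, assume $N$ is simultaneously Nijenhuis on $(\g,\cdot)$ and on $(\g,\ast)$. The key observation is that the deformed product $\diamond_N$ distributes over the linear combination, namely
\begin{equation*}
x\diamond_N y = N(x)\diamond y + x\diamond N(y) - N(x\diamond y) = k_1\, x\cdot_N y + k_2\, x\ast_N y.
\end{equation*}
Using this and the two individual Nijenhuis identities $N(x)\cdot N(y)=N(x\cdot_N y)$ and $N(x)\ast N(y)=N(x\ast_N y)$, I compute
\begin{equation*}
N(x)\diamond N(y) = k_1 N(x)\cdot N(y) + k_2 N(x)\ast N(y) = N\bigl(k_1 x\cdot_N y + k_2 x\ast_N y\bigr) = N(x\diamond_N y),
\end{equation*}
which is exactly the Nijenhuis condition for $(\g,\diamond)$.

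For the converse, the argument is essentially free: specializing $(k_1,k_2)=(1,0)$ gives $\diamond=\,\cdot\,$ and therefore the Nijenhuis condition on $(\g,\cdot)$; specializing $(k_1,k_2)=(0,1)$ gives $\diamond=\ast$ and therefore the Nijenhuis condition on $(\g,\ast)$. Hence $N$ is Nijenhuis on the compatible pre-Lie algebra $(\g,\cdot,\ast)$.

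There is no serious obstacle here — the proof is a direct verification, the only mildly nontrivial step being the distribution identity for $\diamond_N$ in terms of $\cdot_N$ and $\ast_N$, which follows by bilinearity of $N$ and of the two products. I would record the proof in just a few lines, organizing it as (i) the distribution identity for the deformed product, (ii) the one-line algebraic verification using the two component Nijenhuis identities, and (iii) the trivial specialization argument for the converse.
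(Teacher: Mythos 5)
Your proof is correct and follows essentially the same route as the paper: the forward direction is the same linearity expansion showing the Nijenhuis defect for $\diamond$ equals $k_1$ times the defect for $\cdot$ plus $k_2$ times the defect for $\ast$, and your converse by specializing $(k_1,k_2)=(1,0)$ and $(0,1)$ is precisely the "similar" argument the paper omits. No issues.
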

\begin{proof}
If $N$ is a Nijenhuis operator on the compatible pre-Lie algebra $(\g,\cdot,\ast)$, for all $x,y\in \g$, we have
\begin{eqnarray*}
&&N(x)\diamond N(y)-N(N(x)\diamond y+x\diamond N(y)-N(x\diamond y))\\
&=&k_1N(x)\cdot N(y)+k_2N(x)\ast N(y)-N(k_1N(x)\cdot y+k_2N(x)\ast y\\
&&+k_1x\cdot N(y)+k_2x\ast N(y)-N(k_1x\cdot y+k_2x\ast y))\\
&=&k_1(N(x)\cdot N(y)-N(N(x)\cdot y+x\cdot N(y)-N(x\cdot y)))\\
&&+k_2(N(x)\ast N(y)-N(N(x)\ast y+x\ast N(y)-N(x\ast y)))\\
&=&0,
\end{eqnarray*}
which implies that $N$ is a Nijenhuis operator on the pre-Lie algebra $(\g,\diamond)$. The converse part can be proved similarly. We omit details. The proof is finished.
\end{proof}
\begin{pro}\label{deformed-compatible}
Let $N\in \gl(\g)$ be a Nijenhuis operator on the compatible pre-Lie algebra $(\g,\cdot,\ast)$. Then $(\g,\cdot_N,\ast_N)$ is a compatible pre-Lie algebra and $N$ is a homomorphism from $(\g,\cdot_N,\ast_N)$ to $(\g,\cdot,\ast)$.
\end{pro}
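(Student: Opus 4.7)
The plan is to reduce the claim entirely to previously proven results, exploiting the two ``test on a linear combination'' devices that have already been set up, namely Propositions \ref{linear} and \ref{compatible-Nijenhuis}, so that no direct verification of the mixed compatibility identity \eqref{compatible-pre-Lie} is required.

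First, since by hypothesis $N$ is a Nijenhuis operator on each of the pre-Lie algebras $(\g,\cdot)$ and $(\g,\ast)$, I would apply Proposition \ref{deform-Nij} separately to each of them. This immediately yields that $(\g,\cdot_N)$ and $(\g,\ast_N)$ are pre-Lie algebras, and that $N$ is a pre-Lie homomorphism $(\g,\cdot_N)\to(\g,\cdot)$ and also $(\g,\ast_N)\to(\g,\ast)$. By the definition of a homomorphism of compatible pre-Lie algebras, these two statements together give the homomorphism half of the conclusion at no further cost.

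For the compatibility half, I would invoke Proposition \ref{linear}: to know that $(\g,\cdot_N,\ast_N)$ is a compatible pre-Lie algebra, it is enough to verify that for every pair of scalars $k_1,k_2\in\mathbb{K}$ the bilinear operation $x\bullet y := k_1 x\cdot_N y + k_2 x\ast_N y$ is a pre-Lie product on $\g$. Set $\diamond = k_1\cdot + k_2\ast$. Proposition \ref{compatible-Nijenhuis} says precisely that $N$ is a Nijenhuis operator on $(\g,\diamond)$, so applying Proposition \ref{deform-Nij} once more gives that $(\g,\diamond_N)$ is a pre-Lie algebra. A one-line linearity check, using $x\diamond_N y = N(x)\diamond y + x\diamond N(y) - N(x\diamond y)$ and expanding out $\diamond = k_1\cdot + k_2\ast$, shows $\diamond_N = \bullet$. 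Hence $\bullet$ is pre-Lie, and Proposition \ref{linear} concludes.

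The main potential obstacle would have been a direct verification of \eqref{compatible-pre-Lie} for the pair $(\cdot_N,\ast_N)$, which is a long but elementary expansion involving many brackets mixing $N$, $\cdot$, and $\ast$. The whole purpose of Propositions \ref{linear} and \ref{compatible-Nijenhuis} is to collapse such mixed identities into statements about a single ``test'' pre-Lie product $\diamond$, so the proof reduces to tidy bookkeeping. No genuine new computation is needed.
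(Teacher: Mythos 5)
Your proposal is correct and follows essentially the same route as the paper's proof: both use Proposition \ref{compatible-Nijenhuis} to see that $N$ is Nijenhuis for $\diamond=k_1\cdot+k_2\ast$, the same one-line expansion showing $\diamond_N=k_1\cdot_N+k_2\ast_N$, then Proposition \ref{deform-Nij} plus Proposition \ref{linear} for compatibility, and Proposition \ref{deform-Nij} applied to each product separately for the homomorphism claim. No gaps; the only difference is the order in which the two halves of the conclusion are handled.
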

\begin{proof}
By Proposition \ref{compatible-Nijenhuis}, $N$ is a Nijenhuis operator on the pre-Lie algebra $(\g,\diamond)$. For all $x,y\in \g$, we have
\begin{eqnarray*}
 x\diamond_N y&=&N(x)\diamond y+x\diamond N(y)-N(x\diamond y)\\
&=&k_1N(x)\cdot y+k_2N(x)\ast y+k_1x\cdot N(y)+k_2x\ast N(y)-k_1N(x\cdot y)-k_2N(x\ast y)\\
&=&k_1(x\cdot_N y)+k_2(x\ast_N y).
\end{eqnarray*}
By Proposition \ref{deform-Nij}, $(\g,\diamond_N)$ is a pre-Lie algebra. By Proposition \ref{linear}, $(\g,\cdot_N,\ast_N)$ is a compatible pre-Lie algebra. By Proposition \ref{deform-Nij}, $N$ is a homomorphism from $(\g,\cdot_N)$ to $(\g,\cdot)$ and a homomorphism from $(\g,\ast_N)$ to $(\g,\ast)$. Thus, $N$ is a homomorphism from $(\g,\cdot_N,\ast_N)$ to $(\g,\cdot,\ast)$.
\end{proof}

\begin{defi}
An infinitesimal deformation $(\g,\cdot_t,\ast_t)$ of a compatible pre-Lie algebra $(\g,\cdot,\ast)$ generated by $(\omega_1,\omega_2)$ is {\bf trivial} if there exists a linear operator $N\in\gl(\g)$ such that $\Id+tN$ is a compatible pre-Lie algebra homomorphism from $(\g,\cdot_t,\ast_t)$ to $(\g,\cdot,\ast)$.
\end{defi}
 $(\g,\cdot_t,\ast_t)$ is a trivial infinitesimal deformation if and only if for all $x,y\in \g$, the following equalities hold:
\begin{eqnarray}
\label{trivial-deformation-1}\omega_1(x,y)&=&N(x)\cdot y+x\cdot N(y)-N(x\cdot y),\\
\label{trivial-deformation-2}N(\omega_1(x,y))&=&N(x)\cdot N(y),\\
\label{trivial-deformation-3}\omega_2(x,y)&=&N(x)\ast y+x\ast N(y)-N(x\ast y),\\
\label{trivial-deformation-4}N(\omega_2(x,y))&=&N(x)\ast N(y).
\end{eqnarray}
By \eqref{trivial-deformation-1} and \eqref{trivial-deformation-2}, we obtain that $N$ is a Nijenhuis operator on the pre-Lie algebra $(\g,\cdot)$. By \eqref{trivial-deformation-3} and \eqref{trivial-deformation-4}, we obtain that $N$ is a Nijenhuis operator on the pre-Lie algebra $(\g,\ast)$.  Thus, by \eqref{trivial-deformation-1}, \eqref{trivial-deformation-2}, \eqref{trivial-deformation-3} and \eqref{trivial-deformation-4}, we obtain that a trivial infinitesimal deformation of a compatible pre-Lie algebra gives rise to a Nijenhuis operator $N$ on the compatible pre-Lie algebra. Conversely, a Nijenhuis operator can also generate a trivial infinitesimal deformation as the following theorem shows.
\begin{thm}
Let N be a Nijenhuis operator on a compatible pre-Lie algebra $(\g,\cdot,\ast)$. Then a infinitesimal deformation $(\g,\cdot_t,\ast_t)$ of the compatible pre-Lie algebra $(\g,\cdot,\ast)$ can be obtained by putting
\begin{eqnarray}
\label{trivial-deformation-generate-1}\omega_1(x,y)&=&N(x)\cdot y+x\cdot N(y)-N(x\cdot y),\\
\label{trivial-deformation-generate-2}\omega_2(x,y)&=&N(x)\ast y+x\ast N(y)-N(x\ast y).
\end{eqnarray}
Furthermore, this infinitesimal deformation $(\g,\cdot_t,\ast_t)$ is trivial.
\end{thm}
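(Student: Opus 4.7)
The plan is to recognise that $(\omega_1,\omega_2)$ plays two complementary roles at once. By the very definitions of $\cdot_N$ and $\ast_N$ we have $\omega_1(x,y)=x\cdot_N y$ and $\omega_2(x,y)=x\ast_N y$, so Proposition \ref{deformed-compatible} tells us that $(\g,\omega_1,\omega_2)$ is \emph{itself} a compatible pre-Lie algebra. Moreover, a short unwinding of the Matsushima--Nijenhuis bracket shows
\[ [\pi_i,N]^{MN}(x,y)=\pi_i(N(x),y)+\pi_i(x,N(y))-N(\pi_i(x,y)),\qquad i=1,2, \]
so $(\omega_1,\omega_2)=(\delta_{\pi_1}N,\delta_{\pi_2}N)=\delta^1 N$ is a $2$-coboundary in the cochain complex of Theorem \ref{coboundary-operaor}. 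These two identifications will supply everything that is needed.

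For the deformation part, I would translate, via Theorem \ref{thm-MC}, the requirement that $(\g,\pi_1+t\omega_1,\pi_2+t\omega_2)$ be a compatible pre-Lie algebra for every $t\in\mathbb{K}$ into the Maurer--Cartan system. Expanding $[\pi_1+t\omega_1,\pi_1+t\omega_1]^{MN}=0$, the analogous identity for $\pi_2+t\omega_2$, and the cross-bracket $[\pi_1+t\omega_1,\pi_2+t\omega_2]^{MN}=0$, and then collecting powers of $t$ (using $[\pi_1,\pi_1]^{MN}=[\pi_2,\pi_2]^{MN}=[\pi_1,\pi_2]^{MN}=0$), reduces the problem to the two blocks \eqref{deformation-1} and \eqref{deformation-2}. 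The cocycle block \eqref{deformation-1} is precisely $\delta^2(\omega_1,\omega_2)=0$, which follows from $\delta^2\circ\delta^1=0$ (Theorem \ref{coboundary-operaor}) applied to $N\in C^1(\g;\g)$. The quadratic block \eqref{deformation-2} is exactly the Maurer--Cartan statement for the compatible pre-Lie algebra $(\g,\omega_1,\omega_2)$ delivered by Proposition \ref{deformed-compatible} and Theorem \ref{thm-MC}.

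For triviality, I would exhibit $\Id+tN$ as the required compatible pre-Lie algebra homomorphism from $(\g,\cdot_t,\ast_t)$ to $(\g,\cdot,\ast)$, which means verifying \eqref{trivial-deformation-1}--\eqref{trivial-deformation-4}. Equations \eqref{trivial-deformation-1} and \eqref{trivial-deformation-3} are nothing but the defining formulas for $\omega_1$ and $\omega_2$, while \eqref{trivial-deformation-2} and \eqref{trivial-deformation-4} are the Nijenhuis identities $N(x)\cdot N(y)=N(x\cdot_N y)$ and $N(x)\ast N(y)=N(x\ast_N y)$ built into the hypothesis on $N$. The main place where care is needed, and the only real potential source of error, is sign and graded-symmetry bookkeeping for the Matsushima--Nijenhuis bracket: one must track the sign in $\delta_{\pi}f=(-1)^{n-1}[\pi,f]^{MN}$ at $n=1$ when identifying $\delta^1 N$ with $(\omega_1,\omega_2)$, and use the graded symmetry $[\omega_1,\pi_2]^{MN}=[\pi_2,\omega_1]^{MN}$ to match the middle component of $\delta^2(\omega_1,\omega_2)$ with the mixed compatibility equation $[\pi_1,\omega_2]^{MN}+[\pi_2,\omega_1]^{MN}=0$ in \eqref{deformation-1}. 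Once this bookkeeping is settled, both assertions of the theorem follow with no further work.
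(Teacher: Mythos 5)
Your proposal is correct and follows essentially the same route as the paper: identify $(\omega_1,\omega_2)=\delta N$ so that it is automatically a $2$-cocycle, invoke Proposition \ref{deformed-compatible} (via $\omega_1=\cdot_N$, $\omega_2=\ast_N$) to get the quadratic conditions \eqref{deformation-2}, and check that $\Id+tN$ is a homomorphism. The only difference is that you spell out the coefficient-of-$t$ bookkeeping and the verification of \eqref{trivial-deformation-1}--\eqref{trivial-deformation-4} that the paper dismisses as straightforward.
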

\begin{proof}
By \eqref{trivial-deformation-generate-1} and \eqref{trivial-deformation-generate-2}, we obtain that $(\omega_1,\omega_2)=\delta N$. Thus, $(\omega_1,\omega_2)$ is a $2$-cocycle of the compatible pre-Lie algebra $(\g,\cdot,\ast)$. Since N is a Nijenhuis operator on a compatible pre-Lie algebra $(\g,\cdot,\ast)$, by Proposition \ref{deformed-compatible}, $(\g,\omega_1,\omega_2)$ is a compatible pre-Lie algebra. Thus, $(\g,\cdot_t,\ast_t)$ is a infinitesimal deformation of $(\g,\cdot,\ast)$. It is straightforward to deduce that $\Id+tN$ is a compatible pre-Lie algebra homomorphism from $(\g,\cdot_t,\ast_t)$ to $(\g,\cdot,\ast)$. Thus, this infinitesimal deformation is trivial.
\end{proof}
\subsection{Formal deformations of compatible pre-Lie algebras}
\begin{defi}
Let $(\g,\pi_1,\pi_2)$ be a compatible pre-Lie algebra, $\pi_1^t=\pi_1+\sum_{i=1}^{+\infty} \pi_1^it^i,\pi_2^t=\pi_2+\sum_{i=1}^{+\infty} \pi_2^it^i:\g[[t]]\otimes \g[[t]]\longrightarrow \g[[t]]$ be $\mathbb K[[t]]$-bilinear maps, where $\pi_1^i,\pi_2^i:\g\otimes \g\longrightarrow \g$ are linear maps. If $(\g[[t]],\pi_1^t,\pi_2^t)$ is still a compatible pre-Lie algebra, we say that $\{\pi_1^i,\pi_2^i\}_{i\geq1}$ generates a {\bf $1$-parameter formal deformation} of a compatible pre-Lie algebra $(\g,\pi_1,\pi_2)$.
\end{defi}

If $\{\pi_1^i,\pi_2^i\}_{i\geq1}$ generates a $1$-parameter formal deformation of a compatible pre-Lie algebra $(\g,\pi_1,\pi_2)$, for all $x,y,z\in \g$ and $n=1,2,\dots$,  we have
\begin{equation*}
\sum_{i+j=n\atop i,j\geq 0}\pi_1^i(\pi_1^j(x,y),z)-\pi_1^i(x,\pi_1^j(y,z))-\pi_1^i(\pi_1^j(y,x),z)+\pi_1^i(y,\pi_1^j(x,z))=0.
\end{equation*}
Moreover, we have
\begin{eqnarray}\label{n-sum-cocycle-1}
&&\sum_{i+j=n\atop 0<i,j\leq n-1 }\pi_1^i(\pi_1^j(x,y),z)-\pi_1^i(x,\pi_1^j(y,z))-\pi_1^i(\pi_1^j(y,x),z)+\pi_1^i(y,\pi_1^j(x,z))=-[\pi_1,\pi_1^n]^{MN}(x,y,z).
\end{eqnarray}
Similarly, we have
\begin{eqnarray}\label{n-sum-cocycle-2}
&&\sum_{i+j=n\atop 0<i,j\leq n-1 }\pi_2^i(\pi_2^j(x,y),z)-\pi_2^i(x,\pi_2^j(y,z))-\pi_2^i(\pi_2^j(y,x),z)+\pi_2^i(y,\pi_2^j(x,z))=-[\pi_2,\pi_2^n]^{MN}(x,y,z).
\end{eqnarray}
For all $x,y,z\in \g$ and $n=1,2,\dots$,  we have
\begin{eqnarray*}
&&\sum_{i+j=n\atop i,j\geq 0}\pi_1^i(\pi_2^j(x,y),z)+\pi_2^j(\pi_1^i(x,y),z)-\pi_1^i(x,\pi_2^j(y,z))-\pi_2^j(x,\pi_1^i(y,z))\\
&&-\pi_1^i(\pi_2^j(y,x),z)-\pi_2^j(\pi_1^i(y,x),z)+\pi_1^i(y,\pi_2^j(x,z))+\pi_2^j(y,\pi_1^i(x,z))=0.
\end{eqnarray*}
Moreover, we have
\begin{eqnarray*}\label{n-sum-cocycle-3}
&&\sum_{i+j=n\atop 0<i,j\leq n-1 }\pi_1^i(\pi_2^j(x,y),z)+\pi_2^j(\pi_1^i(x,y),z)-\pi_1^i(x,\pi_2^j(y,z))-\pi_2^j(x,\pi_1^i(y,z))\\
&&-\pi_1^i(\pi_2^j(y,x),z)-\pi_2^j(\pi_1^i(y,x),z)+\pi_1^i(y,\pi_2^j(x,z))+\pi_2^j(y,\pi_1^i(x,z))\\
&=&-([\pi_1,\pi_2^n]^{MN}+[\pi_2,\pi_1^n]^{MN})(x,y,z).
\end{eqnarray*}

\begin{defi}
Let $({\pi_1^t}'=\pi_1+\sum_{i=1}^{+\infty} {\pi_1^i}'t^i,{\pi_2^t}'=\pi_2+\sum_{i=1}^{+\infty} {\pi_2^i}'t^i)$ and $(\pi_1^t=\pi_1+\sum_{i=1}^{+\infty} \pi_1^it^i,\pi_2^t=\pi_2+\sum_{i=1}^{+\infty} \pi_2^it^i)$ be two $1$-parameter formal deformations of a compatible pre-Lie algebra $(\g,\pi_1,\pi_2)$. A {\bf formal isomorphism} from $(\g[[t]],{\pi_1^t}',{\pi_2^t}')$ to $(\g[[t]],\pi_1^t,\pi_2^t)$ is a power series $\Phi_t=\sum_{i=0}^{+\infty} \varphi_it^i$, where $\varphi_i:A\longrightarrow A$ are linear maps with $\varphi_0={\Id}$, such that
\begin{eqnarray*}
 \Phi_t\circ {\pi_1^t}'&=&\pi_1^t \circ(\Phi_t\times \Phi_t),\\
   \Phi_t\circ {\pi_2^t}'&=&\pi_2^t \circ(\Phi_t\times \Phi_t).
\end{eqnarray*}
Two $1$-parameter formal deformations $(\g[[t]],{\pi_1^t}',{\pi_2^t}')$ and $(\g[[t]],\pi_1,\pi_2)$ are said to be {\bf equivalent} if there exists a
formal isomorphism $\Phi_t=\sum_{i=0}^{+\infty} \varphi_it^i$ from $(\g[[t]],{\pi_1^t}',{\pi_2^t}')$ to $(\g[[t]],\pi_1,\pi_2)$.
\end{defi}

\begin{defi}
A $1$-parameter formal deformation $(\g[[t]],\pi_1^t,\pi_2^t)$ of a compatible pre-Lie algebra $(\g,\pi_1,\pi_2)$ is said to be {\bf trivial} if it is equivalent to $(\g,\pi_1,\pi_2)$, i.e. there exists $\Phi_t=\sum_{i=0}^{+\infty} \varphi_it^i$, where $\varphi_i:A\longrightarrow A$ are linear maps with $\varphi_0={\Id}$, such that
\begin{eqnarray*}
 \Phi_t\circ \pi_1^t&=&\pi_1 \circ(\Phi_t\times \Phi_t),\\
 \Phi_t\circ \pi_2^t&=&\pi_2 \circ(\Phi_t\times \Phi_t).
\end{eqnarray*}
\end{defi}

\begin{defi}
Let $(\g,\pi_1,\pi_2)$ be a compatible pre-Lie algebra. If all $1$-parameter formal deformations are trivial, we say that $(\g,\pi_1,\pi_2)$ is {\bf rigid}.
\end{defi}

\begin{thm}
Let $(\g,\pi_1,\pi_2)$ be a compatible pre-Lie algebra. If $\huaH^2(\g;\g)=0$, then $(\g,\pi_1,\pi_2)$ is rigid.
\end{thm}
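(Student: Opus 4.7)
The plan is the standard inductive rigidity argument for formal deformations: given any $1$-parameter formal deformation $(\g[[t]],\pi_1^t,\pi_2^t)$ of $(\g,\pi_1,\pi_2)$, I will eliminate its leading perturbation term step by step using $\huaH^2(\g;\g)=0$. Let $n\geq 1$ be the smallest integer with $(\pi_1^n,\pi_2^n)\neq(0,0)$; if none exists, the deformation already equals $(\pi_1,\pi_2)$ and there is nothing to prove.

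The first step is to verify that $(\pi_1^n,\pi_2^n)\in\mathfrak{C}^2(\g;\g)$ is a $2$-cocycle of the compatible pre-Lie algebra. By minimality of $n$, we have $\pi_1^i=\pi_2^i=0$ for $0<i<n$, so the inner sums in the order-$t^n$ deformation equations \eqref{n-sum-cocycle-1}, \eqref{n-sum-cocycle-2}, and the analogous mixed identity all collapse, leaving $[\pi_1,\pi_1^n]^{MN}=0$, $[\pi_2,\pi_2^n]^{MN}=0$, and $[\pi_1,\pi_2^n]^{MN}+[\pi_2,\pi_1^n]^{MN}=0$. These are precisely the three components of $\delta^2(\pi_1^n,\pi_2^n)=0$. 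The hypothesis $\huaH^2(\g;\g)=0$ then furnishes $N\in\mathfrak{C}^1(\g;\g)=\Hom(\g,\g)$ with $(\pi_1^n,\pi_2^n)=\delta^1 N=([\pi_1,N]^{MN},[\pi_2,N]^{MN})$.

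Next, form the formal isomorphism $\Phi_t=\Id-t^n N$ and define $\bar{\pi}_i^t(x,y):=\Phi_t^{-1}\bigl(\pi_i^t(\Phi_t(x),\Phi_t(y))\bigr)$ for $i=1,2$. A routine expansion in $t$, using $\Phi_t^{-1}=\Id+t^n N+O(t^{2n})$, yields $\bar{\pi}_i^t=\pi_i+t^n(\pi_i^n-\delta_{\pi_i}N)+O(t^{n+1})=\pi_i+O(t^{n+1})$, so $(\g[[t]],\bar{\pi}_1^t,\bar{\pi}_2^t)$ is a formal deformation equivalent to the original one but agreeing with $(\pi_1,\pi_2)$ through order $t^n$. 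Iterating this construction produces formal isomorphisms $\Phi_t^{(k)}=\Id-t^{n+k-1}N^{(k)}$ whose composition $\Phi_t=\cdots\circ\Phi_t^{(2)}\circ\Phi_t^{(1)}$ is well defined in the $t$-adic topology on $\Hom(\g,\g)[[t]]$, since each $\Phi_t^{(k)}$ differs from $\Id$ only in orders $\geq n+k-1$. This limit intertwines $(\pi_1^t,\pi_2^t)$ with the trivial deformation $(\pi_1,\pi_2)$, establishing rigidity.

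The main obstacle is mostly bookkeeping: confirming that the order-$t^n$ compatible pre-Lie identities reassemble termwise into $\delta^2(\pi_1^n,\pi_2^n)=0$ in the cochain complex of Theorem \ref{coboundary-operaor}, fixing the sign conventions so that the $-\delta_{\pi_i}N$ contribution from conjugation actually cancels $\pi_i^n$, and verifying that each conjugated pair $(\bar{\pi}_1^t,\bar{\pi}_2^t)$ is itself a bona fide formal deformation so that the same cocycle-plus-coboundary argument can be rerun at every subsequent order.
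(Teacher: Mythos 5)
Your proposal is correct and follows essentially the same route as the paper: identify the leading coefficient $(\pi_1^n,\pi_2^n)$ as a $2$-cocycle from the order-$t^n$ deformation equations, use $\huaH^2(\g;\g)=0$ to write it as a coboundary, remove it by conjugating with $\Phi_t=\Id\mp t^nN$, and iterate. Your extra remark about $t$-adic convergence of the composed isomorphisms only makes explicit what the paper's ``keep repeating the process'' leaves implicit.
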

\begin{proof}
Let $(\pi_1^t=\pi_1+\sum_{i=1}^{+\infty} \pi_1^it^i,\pi_2^t=\pi_2+\sum_{i=1}^{+\infty} \pi_2^it^i)$ be a $1$-parameter formal deformation and assume that $n\geq1$ is the minimal number such that $(\pi_1^n,\pi_2^n)$ is not zero. By \eqref{n-sum-cocycle-1}, \eqref{n-sum-cocycle-2}, \eqref{n-sum-cocycle-3} and $\huaH^2(\g;\g)=0$, we have $(\pi_1^n,\pi_2^n)\in B^2(A;A)$. Thus, there exists $\varphi_n \in \mathfrak{C}^1(\g;\g)$ such that $(\pi_1^n,\pi_2^n)=\delta(-\varphi_n)=(\delta_{\pi_1}(-\varphi_n),\delta_{\pi_2}(-\varphi_n))$. Let $\Phi_t={\Id}+\varphi_nt^n$ and define a new formal deformation $({\pi_1^t}',{\pi_2^t}')$ by ${\pi_1^t}'(x,y)=\Phi_t^{-1}\circ\pi_1^t(\Phi_t(x),\Phi_t(y))$, ${\pi_2^t}'(x,y)=\Phi_t^{-1}\circ\pi_2^t(\Phi_t(x),\Phi_t(y))$. Then $({\pi_1^t}',{\pi_2^t}')$ and $(\pi_1^t,\pi_2^t)$ are equivalent. By straightforward computation, for all $x,y\in \g$, we have
\begin{eqnarray*}
{\pi_1^t}'(x,y)
&=&\Phi_t^{-1}\circ\pi_1^t(\Phi_t(x),\Phi_t(y))\\
&=&({\Id}-\varphi_n t^n+\dots)\pi_1^t\big(x+\varphi_n(x)t^n,y+\varphi_n(y)t^n\big)\\
&=&({\Id}-\varphi_n t^n+\dots)\Big(x\cdot y+\big(x\cdot\varphi_n(y)+\varphi_n(x)\cdot y+\pi_1^n(x,y)\big)t^n+\dots\Big)\\
&=&x\cdot y+\Big(x\cdot\varphi_n(y)+\varphi_n(x)\cdot y+\pi_1^n(x,y)-\varphi_n(x\cdot y)\Big)t^n+\dots.
\end{eqnarray*}
Thus, we have ${\pi_1^1}'={\pi_1^2}'=\dots={\pi_1^{n-1}}'=0$. Moreover, we have
\begin{eqnarray*}
{\pi_1^n}'(x,y)&=&x\cdot \varphi_n(y)+\varphi_n(x)\cdot y+\pi_1^n(x,y)-\varphi_n(x\cdot y)\\
&=&\delta_{\pi_1}\varphi_n(x,y)+\pi_1^n(x,y)\\
&=&0.
\end{eqnarray*}
Similarly, we have ${\pi_2^n}'(x,y)=0$.
Keep repeating the process, we obtain that $(\g[[t]],\pi_1^t,\pi_2^t)$ is equivalent to $(\g,\pi_1,\pi_2)$. The proof is finished.
\end{proof}

\section{Abelian extensions of compatible pre-Lie algebras}\label{sec:extension}
In this section, first, we give a compatible pre-Lie algebra $(\g,\pi_1,\pi_2)$ and its representation $(V,\rho,\mu,\tilde{\rho},\tilde{\mu})$. We construct a bidifferential graded Lie algebra whose Maurer-Cartan elements is $(\pi_1+\rho+\mu,\pi_2+\tilde{\rho}+\tilde{\mu})$. Then, we give a cohomology of a compatible pre-Lie algebra with coefficients in arbitrary representation. Finally, we study abelian extensions of compatible pre-Lie algebras using this cohomological approach. We show that abelian extensions are classified by the second cohomology group.
\subsection{Cohomologies of compatible pre-Lie algebras with coefficients in arbitrary representation}
\
\newline
\indent
Let $\g_1$ and $\g_2$ be vector spaces and elements in $\g_1$ will be denoted by $x,y,x_i$ and elements in $\g_2$ will be denoted by $u,v,v_i$. Let $c:\wedge^{n-1}\g_1\otimes \g_1\longrightarrow \g_2$ be a linear map. We can construct a linear map $\hat{c}\in C^n(\g_1\oplus \g_2,\g_1\oplus \g_2)$ by
\begin{equation*}
\hat{c}(x_1+v_1,\dots,x_n+v_n):=c(x_1,\dots,x_n).
\end{equation*}
In general, for a given linear map $f:\wedge^{k-1}\g_1\otimes\wedge^l\g_2\otimes\g_1\longrightarrow\g_j$ for $j\in \{1,2\}$, we define a linear map $\hat{f}\in C^{k+l}(\g_1\oplus \g_2,\g_1\oplus \g_2)$ by
\begin{equation*}
\hat{f}(x_1+v_1,\dots,x_{k+l}+v_{k+l})=\sum_{\sigma\in \mathbb{S}(k-1,l)}\sgn(\sigma)f(x_{\sigma(1)},\dots,x_{\sigma(k-l)},v_{\sigma(k)},\dots,v_{\sigma(k+l-1)},x_{k+l}).
\end{equation*}
Similarly, for $f:\wedge^k\g_1\otimes\wedge^{l-1}\g_2\otimes\g_2\longrightarrow\g_j$ for $j\in \{1,2\}$, we define a linear map $\hat{f}\in C^{k+l}(\g_1\oplus \g_2,\g_1\oplus \g_2)$ by
\begin{equation*}
\hat{f}(x_1+v_1,\dots,x_{k+l}+v_{k+l})=\sum_{\sigma\in \mathbb{S}(k,l-1)}\sgn(\sigma)f(x_{\sigma(1)},\dots,x_{\sigma(k)},v_{\sigma(k+1)},\dots,v_{\sigma(k+l-1)},v_{k+l}).
\end{equation*}
We call the linear map $\hat{f}$ a {\bf horizontal lift} of $f$, or simply a lift.
We define $\huaG^{k,l}=\wedge^{k-1}\g_1\otimes\wedge^l\g_2\otimes\g_1+\wedge^k\g_1\otimes\wedge^{l-1}\g_2\otimes\g_2$. The vector space $\wedge^{n-1}(\g_1\oplus\g_2)\otimes(\g_1\oplus\g_2)$ is isomorphic to the direct sum of  $\huaG^{k,l}, k+l=n$. In the sequel, we will omit the notation $\hat{\cdot}$.
\begin{defi}{\rm(\cite{Liu})}
A linear map $f\in \Hom(\wedge^{n-1}(\g_1\oplus\g_2)\otimes(\g_1\oplus\g_2),\g_1\oplus\g_2)$ has a {\bf bidegree} $k|l$ if the following four conditions hold:
\begin{itemize}
\item [$\rm(i)$]  $k+l+1=n$;
\item[$\rm(ii)$] If X is an element in $\huaG^{k+1,l}$, then $f(X)\in \g_1$;
\item[$\rm(iii)$] If X is an element in $\huaG^{k,l+1}$, then $f(X)\in \g_2$;
\item[$\rm(iv)$] All the other case, $f(X)=0$.
\end{itemize}
We denote a linear map $f$ with bidegree $k|l$ by $||f||=k|l$.
\end{defi}
We call a linear map $f$ {\bf homogeneous} if $f$ has a bidegree. We denote the set of homogeneous linear maps of bidegree $k|l$ by $C^{k|l}(\g_1\oplus \g_2,\g_1\oplus \g_2)$. We have $k+l\geq 0,k,l\geq -1$  because $n\geq 1$ and $k+1,l+1\geq 0$.

By the above lift, we have the following isomorphisms:
\begin{eqnarray*}
C^{k|0}(\g_1\oplus \g_2,\g_1\oplus \g_2)&\cong& \Hom(\wedge^k\g_1\otimes\g_1,\g_1)\oplus\Hom(\wedge^k\g_1\otimes\g_2,\g_2)\oplus\Hom(\wedge^{k-1}\g_1\otimes\g_2\otimes\g_1,\g_2);\\
C^{l|-1}(\g_1\oplus \g_2,\g_1\oplus \g_2)&\cong& \Hom(\wedge^{l-1}\g_1\otimes\g_1,\g_2).
\end{eqnarray*}
\begin{lem}{\rm(\cite{Liu})}\label{bidegree-1}
If $||f||=k_f|l_f$ and $||g||=k_g|l_g$, then $[f,g]^{MN}$ has the bidegree $k_f+k_g|l_f+l_g$.
\end{lem}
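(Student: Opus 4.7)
The plan is to verify the bidegree claim by case analysis on the summands in the Matsushima--Nijenhuis bracket. Write $p=k_f+l_f$ and $q=k_g+l_g$; then $[f,g]^{MN}=f\circ g-(-1)^{pq}g\circ f$, where each composition is a signed sum of insertions: one of the two maps is applied to a block of input arguments and its output replaces a slot in the argument list of the other. The total arity is $n_f+n_g-1=(k_f+k_g)+(l_f+l_g)+1$, which matches the arity dictated by the claimed bidegree $(k_f+k_g)\mid(l_f+l_g)$.

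Fix an input $X=(X_1,\dots,X_{n_f+n_g-1})$ and set $k=\#\{i:X_i\in\g_1\}$, $l=\#\{i:X_i\in\g_2\}$, so $k+l=n_f+n_g-1$. I would analyse a typical summand of $f\circ g$, of the form $f(\cdots,g(\cdots),\cdots)$. By the bidegree of $g$, the inner $g(\cdots)$ is nonzero only when the block fed to $g$ contains $(k_g+1,l_g)$ copies of $(\g_1,\g_2)$, in which case $g$ returns an element of $\g_1$; or $(k_g,l_g+1)$ copies, in which case $g$ returns an element of $\g_2$. In either case, reinserting the output of $g$ back as one slot of $f$'s input shows that $f$ receives exactly $(k-k_g,l-l_g)$ copies of $(\g_1,\g_2)$. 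Nonvanishing of $f$ on that input then requires this pair to equal $(k_f+1,l_f)$ or $(k_f,l_f+1)$, giving $(k,l)\in\{(k_f+k_g+1,l_f+l_g),\,(k_f+k_g,l_f+l_g+1)\}$. An identical count applies to each summand of $g\circ f$, so we conclude that $[f,g]^{MN}(X)=0$ whenever $(k,l)$ lies outside these two admissible pairs.

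For the remaining two cases I would read off the output type directly. When $(k,l)=(k_f+k_g+1,l_f+l_g)$ every nonvanishing summand feeds $f$ an input with $(k_f+1,l_f)$ copies of $(\g_1,\g_2)$, so the input lies in $\huaG^{k_f+1,l_f}$ and the bidegree of $f$ forces the summand into $\g_1$; when $(k,l)=(k_f+k_g,l_f+l_g+1)$ the analogous count puts each summand into $\g_2$. Together with the vanishing just established, this verifies conditions (i)--(iv) of the bidegree definition for $[f,g]^{MN}$ with bidegree $(k_f+k_g)\mid(l_f+l_g)$.

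The main obstacle is purely combinatorial: one must set up the book-keeping uniformly across the two halves $f\circ g$ and $g\circ f$ and across the two possible insertion types (into a skew slot, or into the distinguished last slot of the outer map). The key conceptual point, which makes the argument painless, is that the bidegree of a map depends only on the total counts of $\g_1$- and $\g_2$-valued arguments, and these totals behave additively under the insertion operation underlying $[\cdot,\cdot]^{MN}$; in particular the signs and unshuffle coefficients never enter the bidegree analysis.
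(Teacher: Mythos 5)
Your proof is correct: the additivity of the $\g_1$/$\g_2$ argument counts under insertion, together with the observation that the bidegree conditions depend only on these totals (so signs and unshuffles are irrelevant), is exactly what is needed, and your case analysis covers both halves of the bracket and both insertion types. The paper itself gives no proof of this lemma (it is quoted from \cite{Liu}), and the argument given there is essentially this same counting argument, so there is nothing further to reconcile.
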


\begin{pro}{\rm(\cite{Liu1})}\label{MC-prelierep}
Let $(V,\rho,\mu)$ be a representation of the pre-Lie algebra $(\g,\pi)$. Then we have
\begin{equation*}
[\pi+\rho+\mu,\pi+\rho+\mu]^{MN}=0.
\end{equation*}
\end{pro}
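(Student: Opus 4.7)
The plan is to recognize $\pi+\rho+\mu$ as the bilinear operation defining the semi-direct product pre-Lie algebra $\g\ltimes_{(\rho,\mu)}V$. Concretely, define $\bar\pi\in\Hom((\g\oplus V)^{\otimes 2},\g\oplus V)$ by
\[
\bar\pi(x+u,\,y+v)=\pi(x,y)+\rho(x)v+\mu(y)u.
\]
Under the horizontal-lift identifications, $\bar\pi$ is precisely $\pi+\rho+\mu$: the lift $\hat\pi$ contributes the $\g\otimes\g\to\g$ component, the lift $\hat\rho$ the $\g\otimes V\to V$ component $(x,v)\mapsto\rho(x)v$, and the lift $\hat\mu$ the $V\otimes\g\to V$ component $(u,y)\mapsto\mu(y)u$, with all other bilinear components vanishing.

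The crucial observation is that for any bilinear operation $\bar\pi$ on a vector space $W$, the Matsushima--Nijenhuis identity $[\bar\pi,\bar\pi]^{MN}=0$ holds if and only if $(W,\bar\pi)$ is a pre-Lie algebra. Hence the proposition reduces to showing that $(\g\oplus V,\bar\pi)$ is a pre-Lie algebra, which is the standard semi-direct product construction. To verify this, I would evaluate the pre-Lie identity for $\bar\pi$ on $X=x+u$, $Y=y+v$, $Z=z+w$ and split the result into its $\g$-valued and $V$-valued components. The $\g$-valued component is exactly the pre-Lie identity for $\pi$, which holds by hypothesis. The $V$-valued component decomposes further by which of the three $V$-variables $u,v,w$ appears; the piece containing $w$ reduces to $\rho$ being a representation of the subadjacent Lie algebra $\g^C$, while the pieces containing $u$ or $v$ reduce to the Leibniz-type compatibility
\[
\mu(y)\mu(x)-\mu(x\cdot y)=\mu(y)\rho(x)-\rho(x)\mu(y),
\]
which is precisely the second axiom in the definition of a representation of $(\g,\pi)$.

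The main obstacle is the combinatorial bookkeeping needed to match each $V$-valued contribution with the correct axiom. A more structured alternative uses Lemma \ref{bidegree-1}: expand $[\pi+\rho+\mu,\pi+\rho+\mu]^{MN}$ by bilinearity into the six summands $[\pi,\pi]^{MN}+2[\pi,\rho]^{MN}+2[\pi,\mu]^{MN}+[\rho,\rho]^{MN}+2[\rho,\mu]^{MN}+[\mu,\mu]^{MN}$, use bidegrees to isolate each contribution on the appropriate component of $\huaG^{k,l}$, and observe that the components into $\g$ vanish by the pre-Lie axiom for $\pi$ while the components into $V$ vanish by the two representation axioms. Either route turns the Maurer--Cartan equation into the familiar fact that the semi-direct product of a pre-Lie algebra with one of its representations is again pre-Lie.
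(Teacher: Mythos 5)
Your proposal is correct. Note that the paper itself gives no proof of Proposition \ref{MC-prelierep}: it is quoted from \cite{Liu1}, so there is no internal argument to match against. Your route is the natural one and is fully consistent with the machinery the paper does set up: the identification of $\pi+\rho+\mu$ with the semidirect product multiplication $(x+u,y+v)\mapsto \pi(x,y)+\rho(x)v+\mu(y)u$ is exactly what the paper's horizontal-lift conventions give, and the equivalence ``$[\bar\pi,\bar\pi]^{MN}=0$ iff $\bar\pi$ is pre-Lie'' is stated explicitly in Section \ref{sec:Maurer-Cartan}, so the proposition does reduce to the standard fact that $\g\ltimes_{(\rho,\mu)}V$ is a pre-Lie algebra; your case analysis of the $V$-valued component (the $w$-terms giving $\rho([x,y]_C)=[\rho(x),\rho(y)]$, the $u$- and $v$-terms giving $\mu(y)\mu(x)-\mu(x\cdot y)=\mu(y)\rho(x)-\rho(x)\mu(y)$) is the correct bookkeeping, and terms with two or more $V$-entries vanish since $V$ is abelian in the semidirect product. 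For comparison, when the paper proves the analogous statement it actually needs (the mixed identity $[\pi_1+\rho+\mu,\pi_2+\tilde\rho+\tilde\mu]^{MN}=0$ in Proposition \ref{pro:MC}), it proceeds by direct evaluation of the bracket on $(x+u,y+v,z+w)$ rather than by invoking a semidirect-product characterization; your first route is the more conceptual packaging of that same computation. One small caveat on your alternative route: all three of $\pi,\rho,\mu$ lift to elements of bidegree $1|0$, so the bidegree by itself does not separate the six summands $[\pi,\pi]^{MN},2[\pi,\rho]^{MN},\dots$; what separates them is restriction of the arguments to the components $\huaG^{3,0}$ and $\huaG^{2,1}$ of the domain, which is in substance the same case analysis as in your first argument.
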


Let $(V,\rho,\mu)$ be a representation of the pre-Lie algebra $(\g,\pi)$, where $\pi(x,y)=x\cdot y$. Denote the set of $n$-cochains by
\begin{equation*}
C^n(\g;V)=\Hom(\wedge^{n-1}\g\otimes \g,V),\quad n\geq 1.
\end{equation*}

By Proposition \ref{MC-prelierep} and graded Jacobi identity, we define a coboundary operator $\partial_{\pi+\rho+\mu}:C^n(\g,V)\longrightarrow C^{n+1}(\g,V)$ by
\begin{equation*}
\partial_{\pi+\rho+\mu} f=(-1)^{n-1}[\pi+\rho+\mu,f]^{MN},\quad \forall f\in C^n(\g,V).
\end{equation*}
In facet, since $\pi+\rho+\mu\in C^{1|0}(\g\oplus V,\g\oplus V)$ and $f\in C^{n|-1}(\g\oplus V,\g\oplus V)$, by Lemma \ref{bidegree-1}, we obtain that $[\pi+\rho+\mu,f]^{MN}\in C^{n+1|-1}(\g\oplus V,\g\oplus V)$. Thus, $[\pi+\rho+\mu,f]^{MN}\in C^{n+1}(\g,V)$, we obtain a well-defined cochain complex $(\oplus_{n=1}^{+\infty}C^n(\g;V),\partial_{\pi+\rho+\mu})$. More precisely, for all $x_1,\dots,x_{n+1}\in \g$, we have
\begin{eqnarray*}
(\partial_{\pi+\rho+\mu} f)(x_1,\dots,x_{n+1})&=&\sum_{i=1}^n(-1)^{i+1}\rho(x_i)f(x_1,\dots,\hat{x_i},\dots,x_{n+1})\\
&&+\sum_{i=1}^n(-1)^{i+1}\mu(x_{n+1})f(x_1,\dots,\hat{x_i},\dots,x_n,x_i)\\
&&-\sum_{i=1}^n(-1)^{i+1}f(x_1,\dots,\hat{x_i}\dots,x_n,x_i\cdot x_{n+1})\\
&&+\sum_{1\leq i<j\leq n}(-1)^{i+j} f([x_i,x_j]_C,x_1,\dots,\hat{x_i},\dots,\hat{x_j},\dots,x_{n+1}),
\end{eqnarray*}
which is a coboundary operator of pre-Lie algebra $(\g,\pi)$ with coefficients in the representation $(V,\rho,\mu)$. We can see more details in \cite{DA}.

Let $(\g,\cdot,\ast)$ be a compatible pre-Lie algebra with $\pi_1(x,y)=x\cdot y$ , $\pi_2(x,y)=x\ast y$ and $(V,\rho,\mu,\tilde{\rho},\tilde{\mu})$ a representation of $(\g,\cdot,\ast)$.
\begin{pro}\label{pro:MC}
With the above notation, $(\pi_1+\rho+\mu,\pi_2+\tilde{\rho}+\tilde{\mu})$ is a Maurer-Cartan element of the bidifferential graded Lie algebra $(C^*(\g\oplus V;\g\oplus V),[\cdot,\cdot]^{MN},\dM_1=0,\dM_2=0)$, i.e.
\begin{equation*}
[\pi_1+\rho+\mu,\pi_1+\rho+\mu]^{MN}=0,\quad [\pi_2+\tilde{\rho}+\tilde{\mu},\pi_2+\tilde{\rho}+\tilde{\mu}]^{MN}=0, \quad [\pi_1+\rho+\mu,\pi_2+\tilde{\rho}+\tilde{\mu}]^{MN}=0.
\end{equation*}
\end{pro}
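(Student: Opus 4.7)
The plan is to recognize these three identities as expressing that the pair $(\pi_1+\rho+\mu,\pi_2+\tilde\rho+\tilde\mu)$ encodes the semi-direct product compatible pre-Lie algebra $\g\ltimes_{(\rho,\mu,\tilde\rho,\tilde\mu)}V$ under the horizontal-lift conventions of Section \ref{sec:extension}. The key identification to set up first is that the bilinear operation
\[
(x+u)\cdot_{\g\oplus V}(y+v)=x\cdot y+\rho(x)(v)+\mu(y)(u)
\]
coincides on $\g\oplus V$ with the sum of lifts $\hat\pi_1+\hat\rho+\hat\mu$, and similarly that $\ast_{\g\oplus V}$ coincides with $\hat\pi_2+\hat{\tilde\rho}+\hat{\tilde\mu}$. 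A brief bidegree count using Lemma \ref{bidegree-1} confirms that each summand has bidegree $1|0$ and lives in the correct slot, so no other components intervene.

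With this identification in hand, I would dispose of the first two identities immediately. Since $(V,\rho,\mu)$ is a representation of the pre-Lie algebra $(\g,\pi_1)$, Proposition \ref{MC-prelierep} gives $[\pi_1+\rho+\mu,\pi_1+\rho+\mu]^{MN}=0$; the same proposition applied to $(V,\tilde\rho,\tilde\mu)$ as a representation of $(\g,\pi_2)$ yields $[\pi_2+\tilde\rho+\tilde\mu,\pi_2+\tilde\rho+\tilde\mu]^{MN}=0$. Thus the two diagonal Maurer-Cartan equations come for free.

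The third and mixed identity $[\pi_1+\rho+\mu,\pi_2+\tilde\rho+\tilde\mu]^{MN}=0$ is where the compatibility axioms \eqref{rep-1} and \eqref{rep-2} of the representation, together with the compatibility \eqref{compatible-pre-Lie} on $\g$, actually enter. The clean way to extract all of them at once is to invoke the preceding semi-direct product proposition, which asserts that $(\g\oplus V,\cdot_{\g\oplus V},\ast_{\g\oplus V})$ is a compatible pre-Lie algebra. Applying Theorem \ref{thm-MC} to this compatible pre-Lie algebra on $\g\oplus V$ then states precisely that its pair of structure maps is a Maurer-Cartan element of $(C^*(\g\oplus V;\g\oplus V),[\cdot,\cdot]^{MN},0,0)$, whose cross bracket vanishes.

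The only real obstacle is the bookkeeping step in setting up the identification of the lifts with the semi-direct product operations; this is a routine check but has to be done carefully because the three summands $\hat\pi_1$, $\hat\rho$, $\hat\mu$ play different structural roles (one on the $\g\otimes\g$ slot, one on the $\g\otimes V$ slot, one on the $V\otimes\g$ slot). Once this is verified, the proposition is a direct corollary of Proposition \ref{MC-prelierep}, the semi-direct product proposition, and Theorem \ref{thm-MC}, with no explicit computation on triples $(x_1+v_1,x_2+v_2,x_3+v_3)$ required.
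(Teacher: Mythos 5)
Your proposal is correct and non-circular, but it takes a genuinely different route for the mixed bracket than the paper does. You and the paper agree on the first two identities: both invoke Proposition \ref{MC-prelierep} for the pre-Lie representations $(V,\rho,\mu)$ of $(\g,\cdot)$ and $(V,\tilde\rho,\tilde\mu)$ of $(\g,\ast)$. For the cross term $[\pi_1+\rho+\mu,\pi_2+\tilde{\rho}+\tilde{\mu}]^{MN}=0$, the paper does an explicit evaluation on triples $(x+u,y+v,z+w)$, expanding the Matsushima--Nijenhuis bracket and cancelling all thirty-two terms by means of \eqref{compatible-pre-Lie}, \eqref{rep-1} and \eqref{rep-2}; you instead identify $\pi_1+\rho+\mu$ and $\pi_2+\tilde{\rho}+\tilde{\mu}$ (via the horizontal lift) with the semi-direct product multiplications $\cdot_{\g\oplus V}$ and $\ast_{\g\oplus V}$, and then apply Theorem \ref{thm-MC} on the vector space $\g\oplus V$ to the compatible pre-Lie algebra $\g\ltimes_{(\rho,\mu,\tilde\rho,\tilde\mu)}V$ established in Section \ref{sec:Maurer-Cartan}. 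Since that semi-direct product proposition and Theorem \ref{thm-MC} are proved earlier and independently of Proposition \ref{pro:MC}, there is no circularity; your argument simply relocates the computational content to the semi-direct product proof (which is essentially the same term-by-term cancellation) and turns the proposition into a corollary, and in fact it would yield the two diagonal identities as well, making Proposition \ref{MC-prelierep} optional. What the paper's direct computation buys is self-containedness of Section \ref{sec:extension} and the fact that no identification of lifts with the semi-direct product structure needs to be spelled out; what your route buys is the avoidance of redoing that expansion. One small remark: the bidegree count via Lemma \ref{bidegree-1} is harmless but not really the point — the identification $\hat\pi_1+\hat\rho+\hat\mu=\cdot_{\g\oplus V}$ follows directly from the definitions of the lifts on the slots $\g\otimes\g$, $\g\otimes V$ and $V\otimes\g$, which is the careful bookkeeping you correctly flag as the only step needing verification.
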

\begin{proof}
Since $(V,\rho,\mu)$ is a representation of the pre-Lie algebra $(\g,\cdot)$, by Proposition \ref{MC-prelierep}, we have
\begin{equation*}
[\pi_1+\rho+\mu,\pi_1+\rho+\mu]^{MN}=0.
\end{equation*}
Similarly, since $(V,\tilde{\rho},\tilde{\mu})$ is a representation of the pre-Lie algebra $(\g,\ast)$, by Proposition \ref{MC-prelierep}, we have
\begin{equation*}
[\pi_2+\tilde{\rho}+\tilde{\mu},\pi_2+\tilde{\rho}+\tilde{\mu}]^{MN}=0.
\end{equation*}
For all $x,y,z\in \g, u,v,w\in V$, by \eqref{compatible-pre-Lie}, \eqref{rep-1} and \eqref{rep-2}, we have
\begin{eqnarray*}
&&[\pi_1+\rho+\mu,\pi_2+\tilde{\rho}+\tilde{\mu}]^{MN}(x+u,y+v,z+w)\\
&=&(\pi_1+\rho+\mu)(\pi_2(x,y)+\tilde{\rho}(x)v+\tilde{\mu}(y)u,z+w)-(\pi_1+\rho+\mu)(\pi_2(y,x)+\tilde{\rho}(y)u+\tilde{\mu}(x)v,z+w)\\
&&-(\pi_1+\rho+\mu)(x+u,\pi_2(y,z)+\tilde{\rho}(y)w+\tilde{\mu}(z)v)+(\pi_1+\rho+\mu)(y+v,\pi_2(x,z)+\tilde{\rho}(x)w+\tilde{\mu}(z)u)\\
&&+(\pi_2+\tilde{\rho}+\tilde{\mu})(\pi_1(x,y)+\rho(x)v+\mu(y)u,z+w)-(\pi_2+\tilde{\rho}+\tilde{\mu})(\pi_1(y,x)+\rho(y)u+\mu(x)v,z+w)\\
&&-(\pi_2+\tilde{\rho}+\tilde{\mu})(x+u,\pi_1(y,z)+\rho(y)w+\mu(z)v)+(\pi_2+\tilde{\rho}+\tilde{\mu})(y+v,\pi_1(x,z)+\rho(x)w+\mu(z)u)\\
&=&\pi_1(\pi_2(x,y),z)+\rho(\pi_2(x,y))w+\mu(z)\tilde{\rho}(x)v+\mu(z)\tilde{\mu}(y)u\\
&&-\pi_1(\pi_2(y,x),z)-\rho(\pi_2(y,x))w-\mu(z)\tilde{\rho}(y)u-\mu(z)\tilde{\mu}(x)v\\
&&-\pi_1(x,\pi_2(y,z))-\rho(x)\tilde{\rho}(y)w-\rho(x)\tilde{\mu}(z)v-\mu(\pi_2(y,z))u\\
&&+\pi_1(y,\pi_2(x,z))+\rho(y)\tilde{\rho}(x)w+\rho(y)\tilde{\mu}(z)u+\mu(\pi_2(x,z))v\\
&&+\pi_2(\pi_1(x,y),z)+\tilde{\rho}(\pi_1(x,y))w+\tilde{\mu}(z)\rho(x)v+\tilde{\mu}(z)\mu(y)u\\
&&-\pi_2(\pi_1(y,x),z)-\tilde{\rho}(\pi_1(y,x))w-\tilde{\mu}(z)\rho(y)u-\tilde{\mu}(z)\mu(x)v\\
&&-\pi_2(x,\pi_1(y,z))-\tilde{\rho}(x)\rho(y)w-\tilde{\rho}(x)\mu(z)v-\tilde{\mu}(\pi_1(y,z))u\\
&&+\pi_2(y,\pi_1(x,z))+\tilde{\rho}(y)\rho(x)w+\tilde{\rho}(y)\mu(z)u+\tilde{\mu}(\pi_1(x,z))v\\
&=&0,
\end{eqnarray*}

which implies that
\begin{equation*}
 [\pi_1+\rho+\mu,\pi_2+\tilde{\rho}+\tilde{\mu}]^{MN}=0.
\end{equation*}
This finishes the proof.
\end{proof}
We define the set of $n$-cochains $(n\geq1)$ by
\begin{equation*}
\mathfrak{C}^n(\g;V)=\underbrace{C^n(\g;V)\oplus C^n(\g;V)\oplus\cdots \oplus C^n(\g;V)}_{n~copies}.
\end{equation*}
Define the operator $\partial:\mathfrak{C}^n(\g;V)\longrightarrow \mathfrak{C}^{n+1}(\g;V)$ by
\begin{equation*}
 \partial^1 f=(\partial_{\pi_1+\rho+\mu}f,\partial_{\pi_2+\tilde{\rho}+\tilde{\mu}}f),\quad
 \forall f \in  \Hom(\g,V), n=1.
 \end{equation*}
And for all $(f_1,\dots,f_n) \in \mathfrak{C}^n(\g,V), 2\leq i\leq n$, we have
\begin{equation*}
\partial^n (f_1,\dots,f_n)=(\partial_{\pi_1+\rho+\mu}f_1,\dots, \underbrace{\partial_{\pi_2+\tilde{\rho}+\tilde{\mu}} f_{i-1}+\partial_{\pi_1+\rho+\mu} f_i}_{i},\dots,\partial_{\pi_2+\tilde{\rho}+\tilde{\mu}} f_n).
\end{equation*}
\begin{thm}
The operator  $\partial:\mathfrak{C}^n(\g;V)\longrightarrow \mathfrak{C}^{n+1}(\g;V)$ defined as above satisfies $\partial\circ\partial=0$.
\end{thm}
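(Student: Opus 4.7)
The plan is to adapt the proof of Theorem \ref{coboundary-operaor} essentially verbatim, using Proposition \ref{pro:MC} in the role that Theorem \ref{thm-MC} played there. To lighten notation, write $\Omega_1=\pi_1+\rho+\mu$ and $\Omega_2=\pi_2+\tilde{\rho}+\tilde{\mu}$, so that by Proposition \ref{pro:MC} one has $[\Omega_i,\Omega_j]^{MN}=0$ for all $i,j\in\{1,2\}$.

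The first step is to establish the three operator identities
\[
\partial_{\Omega_1}\circ\partial_{\Omega_1}=0,\qquad \partial_{\Omega_2}\circ\partial_{\Omega_2}=0,\qquad \partial_{\Omega_1}\circ\partial_{\Omega_2}+\partial_{\Omega_2}\circ\partial_{\Omega_1}=0.
\]
Each follows from the graded Jacobi identity for $[\cdot,\cdot]^{MN}$ via the standard formula $[\Omega_i,[\Omega_j,f]^{MN}]^{MN}+[\Omega_j,[\Omega_i,f]^{MN}]^{MN}=[[\Omega_i,\Omega_j]^{MN},f]^{MN}$ combined with the three Maurer-Cartan identities above. One should also check that $\partial_{\Omega_i}$ actually sends $C^n(\g;V)$ into $C^{n+1}(\g;V)$, not merely into $C^{n+1}(\g\oplus V;\g\oplus V)$: lifting $f\in C^n(\g;V)$ to a cochain of bidegree $n|{-1}$ and noting that $\Omega_i$ has bidegree $1|0$, Lemma \ref{bidegree-1} forces $[\Omega_i,f]^{MN}$ to have bidegree $(n+1)|{-1}$, i.e.\ to descend to an element of $C^{n+1}(\g;V)$.

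The second step is purely combinatorial. For $(f_1,\dots,f_n)\in\mathfrak{C}^n(\g;V)$, I would compute that the $i$-th component of $\partial^{n+1}\partial^n(f_1,\dots,f_n)$ (for $3\le i\le n$) equals, up to a uniform sign,
\[
\partial_{\Omega_2}^2 f_{i-2}+\bigl(\partial_{\Omega_1}\partial_{\Omega_2}+\partial_{\Omega_2}\partial_{\Omega_1}\bigr)f_{i-1}+\partial_{\Omega_1}^2 f_i,
\]
while the endpoint components $i=1,2,n+1,n+2$ produce only the first or last one or two of these three kinds of terms. All such terms vanish by the first step, and the initial cases $n=1,2$ are handled by direct substitution following exactly the same pattern as in Theorem \ref{coboundary-operaor}.

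The main obstacle is nothing more than keeping track of indices: which shift of $f_j$ appears in which slot of the output $(n+2)$-tuple, and which subcollection of the three kinds of terms appears at each boundary position. The algebraic content of the argument is entirely encapsulated in Proposition \ref{pro:MC} together with the graded Jacobi identity, so once the bookkeeping is set up the vanishing is automatic.
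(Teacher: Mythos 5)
Your proposal is correct and follows essentially the same route as the paper: the paper's proof simply invokes Proposition \ref{pro:MC} and the graded Jacobi identity and repeats the computation of Theorem \ref{coboundary-operaor} with $\pi_1+\rho+\mu$ and $\pi_2+\tilde{\rho}+\tilde{\mu}$ in place of $\pi_1,\pi_2$, which is exactly your plan (including the bidegree argument via Lemma \ref{bidegree-1}, which the paper records just before the theorem).
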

\begin{proof}
By Proposition \ref{pro:MC} and the graded Jacobi identity, similarly to the proof of Theorem \ref{coboundary-operaor}, we have $\partial\circ\partial=0$.
\end{proof}
\begin{defi}
Let $(V,\rho,\mu,\tilde{\rho},\tilde{\mu})$ be a representation of the compatible pre-Lie algebra $(\g,\cdot,\ast)$. The cohomology of the cochain complex $(\oplus_{n=1}^{+\infty}\mathfrak{C}^n(\g;V),\partial)$ is called the cohomology of $(\g,\cdot,\ast)$ with coefficients in the representation $(V,\rho,\mu,\tilde{\rho},\tilde{\mu})$. The corresponding $n$-th cohomology group is denoted by $\huaH^n(\g;V)$.
\end{defi}

\subsection{Abelian extensions of compatible pre-Lie algebras}
\begin{defi}
Let $(\g,\cdot,\ast)$ and $(V,\cdot_V,\ast_V)$ be two compatible pre-Lie algebras. An {\bf  extension} of $(\g,\cdot,\ast)$ by $(V,\cdot_V,\ast_V)$ is a short exact sequence of compatible pre-Lie algebras morphisms:
$$
0\longrightarrow V\stackrel{\tau}{\longrightarrow} \hat{\g}\stackrel{p}{\longrightarrow} \g\longrightarrow 0,
$$
where $(\hat{\g},\cdot_{\hat{\g}},\ast_{\hat{\g}})$ is a compatible pre-Lie algebra.

It is called an {\bf abelian extension} if  $(V,\cdot_V,\ast_V)$ is an abelian compatible pre-Lie algebra, i.e.   for all $u,v\in V, u\cdot_V v=u\ast_V v=0$ .
\end{defi}
\begin{defi}
A {\bf section} of an extension $(\hat{\g},\cdot_{\hat{\g}},\ast_{\hat{\g}})$ of a compatible pre-Lie algebra $(\g,\cdot,\ast)$ by  $(V,\cdot_V,\ast_V)$ is a linear map $s:\g\longrightarrow \hat{\g}$ such that $p\circ s=\Id_{\g}$.
\end{defi}

Let $(\hat{\g},\cdot_{\hat{\g}},\ast_{\hat{\g}})$ be an abelian extension of a compatible pre-Lie algebra $(\g,\cdot,\ast)$ by  $(V,\cdot_V,\ast_V)$ and $s:\g\longrightarrow \hat{\g}$ a section. For all $x,y\in \g$, define linear maps $\theta,\tilde{\theta}:\g\otimes \g\longrightarrow V$ respectively by
\begin{eqnarray}
\label{product-1}\theta(x,y)&=&s(x)\cdot_{\hat{\g}}s(y)-s(x\cdot y),\\
\label{product-2}\tilde{\theta}(x,y)&=&s(x)\ast_{\hat{\g}}s(y)-s(x\ast y).
\end{eqnarray}
And for all $x,y\in \g, u\in V$, define $\rho,\mu,\tilde{\rho},\tilde{\mu}:\g\longrightarrow\gl(V)$ respectively by
\begin{eqnarray}
\label{representation-1}\rho(x)(u)=s(x)\cdot_{\hat{\g}} u,\quad \mu(x)(u)=u\cdot_{\hat{\g}} s(x),\\
\label{representation-2}\tilde{\rho}(x)(u)=s(x)\ast_{\hat{\g}} u,\quad \tilde{\mu}(x)(u)=u\ast_{\hat{\g}} s(x).
\end{eqnarray}
Obviously, $\hat{\g}$ is isomorphic to $\g\oplus V$ as vector spaces. Transfer the compatible pre-Lie algebra structure on $\hat{\g}$ to that on $\g\oplus V$, we obtain a compatible pre-Lie algebra $(\g\oplus V,\cdot_{(\theta,\rho,\mu)},\ast_{(\tilde{\theta},\tilde{\rho},\tilde{\mu})})$, where $``\cdot_{(\theta,\rho,\mu)}$'' and $``\ast_{(\tilde{\theta},\tilde{\rho},\tilde{\mu})}$'' are given by
\begin{eqnarray}
  \label{new-compatible-1}(x+u)\cdot_{(\theta,\rho,\mu)} (y+v)&=&x\cdot y+\theta(x,y)+\rho(x)(v)+\mu(y)(u),\quad \forall ~x,y\in \g, u,v\in V,\\
   \label{new-compatible-2}(x+u)\ast_{(\tilde{\theta},\tilde{\rho},\tilde{\mu})} (y+v)&=&x\ast y+\tilde{\theta}(x,y)+\tilde{\rho}(x)(v)+\tilde{\mu}(y)(u),\quad \forall ~x,y\in \g, u,v\in V.
\end{eqnarray}

\begin{thm}\label{thm:representation}
With the above notation, $(V,\rho,\mu,\tilde{\rho},\tilde{\mu})$ is a representation of the compatible pre-Lie algebra $(\g,\cdot,\ast)$. Moreover, this representation is independent of the choice of sections.
\end{thm}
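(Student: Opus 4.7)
The plan is to exploit the fact that $V$, identified with $\tau(V)\subset\hat{\g}$, is an abelian two-sided ideal with respect to both products $\cdot_{\hat{\g}}$ and $\ast_{\hat{\g}}$. Since $p$ is a homomorphism of compatible pre-Lie algebras with $\ker p=\tau(V)$ and $(V,\cdot_V,\ast_V)$ is abelian, any product inside $\hat{\g}$ of two elements of $V$ vanishes, and any product of $s(x)$ with an element of $V$ lands back in $V$. This makes $\rho,\mu,\tilde{\rho},\tilde{\mu}:\g\to\gl(V)$ well-defined, and is the engine that will kill every cocycle-dependent residue in subsequent computations.

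Next I would check that $(V,\rho,\mu)$ is a representation of $(\g,\cdot)$. Applying the pre-Lie identity in $\hat{\g}$ to the triple $(s(x),s(y),u)$ and using $s(x)\cdot_{\hat{\g}}s(y)=s(x\cdot y)+\theta(x,y)$, the cocycle correction $\theta(x,y)\cdot_{\hat{\g}}u$ drops because it is a product inside the abelian $V$; this yields $\rho([x,y]_C)=[\rho(x),\rho(y)]$, so $\rho$ is a representation of the sub-adjacent Lie algebra $\g^C$. The mixed identity
\begin{equation*}
\mu(y)\circ\mu(x)-\mu(x\cdot y)=\mu(y)\circ\rho(x)-\rho(x)\circ\mu(y)
\end{equation*}
follows by applying the pre-Lie identity to $(u,s(x),s(y))$ and discarding $u\cdot_{\hat{\g}}\theta(x,y)=0$. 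The verification that $(V,\tilde{\rho},\tilde{\mu})$ represents $(\g,\ast)$ is identical, with $\ast_{\hat{\g}}$ and $\tilde{\theta}$ replacing $\cdot_{\hat{\g}}$ and $\theta$.

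For the cross conditions \eqref{rep-1} and \eqref{rep-2}, I would apply the compatibility identity \eqref{compatible-pre-Lie} for the two products of $\hat{\g}$ to the triples $(s(x),s(y),u)$ and $(u,s(x),s(y))$, respectively. Expanding the eight terms in each case and discarding every contribution containing a product of two elements of $V$ (i.e.\ residues of the form $\theta(x,y)\cdot_{\hat{\g}}u$, $\tilde{\theta}(x,y)\cdot_{\hat{\g}}u$, $u\cdot_{\hat{\g}}\theta(x,y)$, and so on), the identities reduce exactly to \eqref{rep-1} and \eqref{rep-2}. The only real obstacle is the bookkeeping of these eight terms and the correct substitution for $s(x)\cdot_{\hat{\g}}s(y)$ and $s(x)\ast_{\hat{\g}}s(y)$; once the abelian residues are removed, the equalities are immediate.

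Finally, for independence of the section, let $s':\g\to\hat{\g}$ be a second section. Since $p\circ(s'-s)=0$, the difference $\varphi:=s'-s$ takes values in $V$. Then for $u\in V$,
\begin{equation*}
\rho'(x)(u)=s'(x)\cdot_{\hat{\g}}u=s(x)\cdot_{\hat{\g}}u+\varphi(x)\cdot_{\hat{\g}}u=\rho(x)(u),
\end{equation*}
because $\varphi(x)\cdot_{\hat{\g}}u$ is a product of two elements of the abelian $V$. The identical computation handles $\mu'$, $\tilde{\rho}'$, $\tilde{\mu}'$, so the quadruple $(\rho,\mu,\tilde{\rho},\tilde{\mu})$ is independent of the chosen section.
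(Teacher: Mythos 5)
Your proposal is correct and follows essentially the same route as the paper: applying the pre-Lie identities and the compatibility identity \eqref{compatible-pre-Lie} of the extension algebra to mixed triples involving $s(x),s(y)$ and $u\in V$, using that $V$ is an abelian ideal to discard the cocycle residues, and proving section-independence from $s-s'$ taking values in $V$. The only cosmetic differences are that you compute directly in $\hat{\g}$ rather than after transferring the structure to $\g\oplus V$, and you use the triple $(u,s(x),s(y))$ instead of $(s(x),u,s(y))$ for \eqref{rep-2}, which is equivalent by the skew-symmetry of \eqref{compatible-pre-Lie} in its first two arguments.
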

\begin{proof}
For all $x,y\in \g$, $u\in V$, by the definition of a pre-Lie algebra, we have
\begin{eqnarray*}0&=&(x\cdot_{(\theta,\rho,\mu)}y)\cdot_{(\theta,\rho,\mu)} u-x\cdot_{(\theta,\rho,\mu)} (y\cdot_{(\theta,\rho,\mu)} u)-(y\cdot_{(\theta,\rho,\mu)} x)\cdot_{(\theta,\rho,\mu)} u+y\cdot_{(\theta,\rho,\mu)} (x\cdot_{(\theta,\rho,\mu)} u)\\
&=&(x\cdot y+\theta(x,y))\cdot_{(\theta,\rho,\mu)} u-x\cdot_{(\theta,\rho,\mu)}\rho(y)u-(y\cdot x+\theta(y,x))\cdot_{(\theta,\rho,\mu)}u+y\cdot_{(\theta,\rho,\mu)}\rho(x)u\\
&=&\rho(x\cdot y)u-\rho(x)\rho(y)u-\rho(y\cdot x)u+\rho(y)\rho(x)u,
\end{eqnarray*}

and
\begin{eqnarray*}
0&=&(u\cdot_{(\theta,\rho,\mu)}x)\cdot_{(\theta,\rho,\mu)}y-u\cdot_{(\theta,\rho,\mu)}(x\cdot_{(\theta,\rho,\mu)} y)-(x\cdot_{(\theta,\rho,\mu)} u)\cdot_{(\theta,\rho,\mu)} y+x\cdot_{(\theta,\rho,\mu)}(u\cdot_{(\theta,\rho,\mu)} y)\\
&=&\mu(x)u\cdot_{(\theta,\rho,\mu)} y-u\cdot_{(\theta,\rho,\mu)}(x\cdot y+\theta(x,y))-\rho(x)u\cdot_{(\theta,\rho,\mu)} y+x\cdot_{(\theta,\rho,\mu)} \mu(y)u\\
&=&\mu(y)\mu(x)u-\mu(x \cdot y)u-\mu(y)\rho(x)u+\rho(x)\mu(y)u,
\end{eqnarray*}
which implies that
\begin{eqnarray*}
\rho([x,y]_C&=&\rho(x)\circ\rho(y)-\rho(y)\circ\rho(x),\\
\mu(y)\circ\mu(x)-\mu(x\cdot y)&=&\mu(y)\circ\rho(x)-\rho(x)\circ\mu(y).
\end{eqnarray*}
Thus, $(V,\rho,\mu)$ is a representation of the pre-Lie algebra $(\g,\cdot)$. Similarly, $(V,\tilde{\rho},\tilde{\mu})$ is a representation of the pre-Lie algebra $(\g,\ast)$.
\begin{eqnarray*}0&=&(x\ast_{(\tilde{\theta},\tilde{\rho},\tilde{\mu})} y)\cdot_{(\theta,\rho,\mu)} u+(x\cdot_{(\theta,\rho,\mu)} y)\ast_{(\tilde{\theta},\tilde{\rho},\tilde{\mu})} u-x\cdot_{(\theta,\rho,\mu)}(y\ast_{(\tilde{\theta},\tilde{\rho},\tilde{\mu})} u)-x\ast_{(\tilde{\theta},\tilde{\rho},\tilde{\mu})}(y\cdot_{(\theta,\rho,\mu)} u)\\
&&-(y\ast_{(\tilde{\theta},\tilde{\rho},\tilde{\mu})} x)\cdot_{(\theta,\rho,\mu)} u-(y\cdot_{(\theta,\rho,\mu)} x)\ast_{(\tilde{\theta},\tilde{\rho},\tilde{\mu})} u+y\cdot_{(\theta,\rho,\mu)}(x\ast_{(\tilde{\theta},\tilde{\rho},\tilde{\mu})} u)+y\ast_{(\tilde{\theta},\tilde{\rho},\tilde{\mu})}(x\cdot_{(\theta,\rho,\mu)} u)\\
&=&(x\ast y+\tilde{\theta}(x,y))\cdot_{(\theta,\rho,\mu)} u+(x\cdot y+\theta(x,y))\ast_{(\tilde{\theta},\tilde{\rho},\tilde{\mu})} u-x\cdot_{(\theta,\rho,\mu)}\tilde{\rho}(y)u-x\ast_{(\tilde{\theta},\tilde{\rho},\tilde{\mu})} \rho(y)u\\
&&-(y\ast x+\tilde{\theta}(y,x))\cdot_{(\theta,\rho,\mu)} u-(y\cdot x+\theta(y,x))\ast_{(\tilde{\theta},\tilde{\rho},\tilde{\mu})} u+y\cdot_{(\theta,\rho,\mu)}\tilde{\rho}(x)u+y\ast_{(\tilde{\theta},\tilde{\rho},\tilde{\mu})} \rho(x)u\\
&=&\rho(x\ast y)u+\tilde{\rho}(x\cdot y)u-\rho(x)\tilde{\rho}(y)u-\tilde{\rho}(x)\rho(y)u-\rho(y\ast x)u-\tilde{\rho}(y\cdot x)u+\rho(y)\tilde{\rho}(x)u+\tilde{\rho}(y)\rho(x)u,
\end{eqnarray*}
which implies that
\begin{equation*}
\rho(x\ast y)+\tilde{\rho}(x\cdot y)-\rho(x)\tilde{\rho}(y)-\tilde{\rho}(x)\rho(y)=\rho(y\ast x)+\tilde{\rho}(y\cdot x)-\rho(y)\tilde{\rho}(x)-\tilde{\rho}(y)\rho(x).
\end{equation*}
Similarly, by
\begin{eqnarray*}
&&(x\ast_{(\tilde{\theta},\tilde{\rho},\tilde{\mu})} u)\cdot_{(\theta,\rho,\mu)} y+(x\cdot_{(\theta,\rho,\mu)} u)\ast_{(\tilde{\theta},\tilde{\rho},\tilde{\mu})} y-x\cdot_{(\theta,\rho,\mu)}(u\ast_{(\tilde{\theta},\tilde{\rho},\tilde{\mu})} y)-x\ast_{(\tilde{\theta},\tilde{\rho},\tilde{\mu})}(u\cdot_{(\theta,\rho,\mu)} y)\\
&&-(u\ast_{(\tilde{\theta},\tilde{\rho},\tilde{\mu})} x)\cdot_{(\theta,\rho,\mu)} y-(u\cdot_{(\theta,\rho,\mu)} x)\ast_{(\tilde{\theta},\tilde{\rho},\tilde{\mu})} y+u\cdot_{(\theta,\rho,\mu)}(x\ast_{(\tilde{\theta},\tilde{\rho},\tilde{\mu})} y)+u\ast_{(\tilde{\theta},\tilde{\rho},\tilde{\mu})}(x\cdot_{(\theta,\rho,\mu)} y)\\
&=&0,
\end{eqnarray*}
we have
\begin{equation*}
\mu(y)\tilde{\rho}(x)-\rho(x)\tilde{\mu}(y)-\mu(y)\tilde{\mu}(x)+\mu(x\ast y)=-\tilde{\mu}(y)\rho(x)+\tilde{\rho}(x)\mu(y)+\tilde{\mu}(y)\mu(x)-\tilde{\mu}(x\cdot y).
\end{equation*}
Thus, $(V,\rho,\mu,\tilde{\rho},\tilde{\mu})$ is a representation of the compatible pre-Lie algebra $(\g,\cdot,\ast)$.
Let $s'$ be another section and $(V,\rho',\mu',\tilde{\rho}',\tilde{\mu}')$ be the corresponding representation of the compatible pre-Lie algebra $(\g,\cdot,\ast)$. Since $s(x)-s'(x)\in V$, then we have
\begin{eqnarray*}
\rho(x)u-\rho'(x)u=(s(x)-s'(x))\cdot_{\hat{\g}}u=0,\\
\mu(x)u-\mu'(x)u=u\cdot_{\hat{\g}}(s(x)-s'(x))=0,
\end{eqnarray*}
which implies that $\rho=\rho'$ and $\mu=\mu'$. Similarly, we have $\tilde{\rho}=\tilde{\rho}'$ and $\tilde{\mu}=\tilde{\mu}'$.
Thus, this representation is independent of the choice of sections.
\end{proof}

\begin{thm}\label{thm:cocycle}
With the above notation, $(\theta,\tilde{\theta})$ is a $2$-cocycle of the compatible pre-Lie algebra $(\g,\cdot,\ast)$ with coefficients in the representation $(V,\rho,\mu,\tilde{\rho},\tilde{\mu})$.
\end{thm}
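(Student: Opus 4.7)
The plan is to verify the three equations that together constitute $\partial(\theta,\tilde{\theta}) = 0$ in $\mathfrak{C}^3(\g;V)$, namely $\partial_{\pi_1+\rho+\mu}\theta = 0$, $\partial_{\pi_2+\tilde{\rho}+\tilde{\mu}}\tilde{\theta} = 0$, and the cross relation $\partial_{\pi_2+\tilde{\rho}+\tilde{\mu}}\theta + \partial_{\pi_1+\rho+\mu}\tilde{\theta} = 0$. The unifying observation is that the transferred operations \eqref{new-compatible-1}--\eqref{new-compatible-2} make $\g\oplus V$ into a compatible pre-Lie algebra, since they arise from the compatible pre-Lie structure on $\hat{\g}$ via the linear isomorphism induced by the section $s$. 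This is precisely the fact already exploited in Theorem \ref{thm:representation} to extract the representation axioms; here the same idea yields the cocycle identities.

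The direct route is: apply the pre-Lie axiom for $\cdot_{(\theta,\rho,\mu)}$ to three pure $\g$-arguments $x,y,z$ and project the resulting identity onto $V$. Using the definitions \eqref{product-1} and \eqref{representation-1}, the $\g$-component merely reproduces the pre-Lie axiom of $(\g,\cdot)$, while the $V$-component collapses to $(\partial_{\pi_1+\rho+\mu}\theta)(x,y,z) = 0$. Replacing $\cdot$ by $\ast$ throughout gives $\partial_{\pi_2+\tilde{\rho}+\tilde{\mu}}\tilde{\theta} = 0$. For the cross relation, one would apply the compatibility axiom \eqref{compatible-pre-Lie} to $x,y,z\in\g$ with respect to the pair $(\cdot_{(\theta,\rho,\mu)},\ast_{(\tilde{\theta},\tilde{\rho},\tilde{\mu})})$, project to $V$, and regroup the terms to recognise the two coboundary expressions.

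A cleaner approach that I would actually prefer is to argue inside $C^\ast(\g\oplus V;\g\oplus V)$ with the Matsushima--Nijenhuis bracket. Set $\hat{\pi}_1 := \pi_1+\rho+\mu+\theta$ and $\hat{\pi}_2 := \pi_2+\tilde{\rho}+\tilde{\mu}+\tilde{\theta}$; these are precisely the cochains encoding the operations \eqref{new-compatible-1}--\eqref{new-compatible-2}. By Theorem \ref{thm-MC}, the compatible pre-Lie structure on $\g\oplus V$ is equivalent to the three Maurer--Cartan equations $[\hat{\pi}_i,\hat{\pi}_j]^{MN} = 0$ for $i,j\in\{1,2\}$. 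Expand each bracket bilinearly: Proposition \ref{pro:MC} kills the self-brackets of $\pi_1+\rho+\mu$ and $\pi_2+\tilde{\rho}+\tilde{\mu}$ and their mutual bracket, while a bidegree count forces $[\theta,\theta]^{MN} = [\theta,\tilde{\theta}]^{MN} = [\tilde{\theta},\tilde{\theta}]^{MN} = 0$ (any such composition feeds a $V$-valued output into a cochain that vanishes on $V$-inputs). What remains is $2[\pi_1+\rho+\mu,\theta]^{MN}$, $2[\pi_2+\tilde{\rho}+\tilde{\mu},\tilde{\theta}]^{MN}$, and $[\pi_1+\rho+\mu,\tilde{\theta}]^{MN}+[\pi_2+\tilde{\rho}+\tilde{\mu},\theta]^{MN}$ respectively, which up to sign are precisely the three coboundary expressions we need to vanish.

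The main obstacle in the direct route is the cross relation: expanding \eqref{compatible-pre-Lie} on $\g\oplus V$ produces on the order of sixteen summands in $V$ that must be carefully matched against the two coboundary formulas. The Maurer--Cartan route sidesteps this bookkeeping entirely, since separating the single identity $[\hat{\pi}_1,\hat{\pi}_2]^{MN} = 0$ by bidegree automatically sorts each summand into the correct coboundary.
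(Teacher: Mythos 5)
Your proposal is correct, and it actually contains two complete strategies. Your ``direct route'' is exactly the paper's proof: the paper applies the pre-Lie identity for $\cdot_{(\theta,\rho,\mu)}$ (resp.\ $\ast_{(\tilde{\theta},\tilde{\rho},\tilde{\mu})}$) to $x,y,z\in\g$ to get $\partial_{\pi_1+\rho+\mu}\theta=0$ and $\partial_{\pi_2+\tilde{\rho}+\tilde{\mu}}\tilde{\theta}=0$, and then expands the compatibility identity \eqref{compatible-pre-Lie} for the transferred pair on $\g\oplus V$, reorganizing the sixteen $V$-valued terms into $-(\partial_{\pi_2+\tilde{\rho}+\tilde{\mu}}\theta+\partial_{\pi_1+\rho+\mu}\tilde{\theta})(x,y,z)$. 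Your preferred Maurer--Cartan route is genuinely different from what the paper does and is sound: writing $\hat{\pi}_1=\pi_1+\rho+\mu+\theta$ and $\hat{\pi}_2=\pi_2+\tilde{\rho}+\tilde{\mu}+\tilde{\theta}$, Theorem \ref{thm-MC} applied to the vector space $\g\oplus V$ turns the transferred compatible pre-Lie structure into $[\hat{\pi}_i,\hat{\pi}_j]^{MN}=0$; Proposition \ref{pro:MC} removes the $(1|0)$-parts' brackets, and since $\theta,\tilde{\theta}$ have bidegree $2|-1$, Lemma \ref{bidegree-1} gives their mutual brackets bidegree $4|-2$, hence zero (equivalently, as you say, they output into $V$ and vanish on $V$-inputs), leaving precisely $[\pi_1+\rho+\mu,\theta]^{MN}=0$, $[\pi_2+\tilde{\rho}+\tilde{\mu},\tilde{\theta}]^{MN}=0$ and $[\pi_1+\rho+\mu,\tilde{\theta}]^{MN}+[\pi_2+\tilde{\rho}+\tilde{\mu},\theta]^{MN}=0$, which are the three cocycle conditions up to the overall sign $(-1)^{n-1}$. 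What each buys: the paper's computation is self-contained but requires the term-by-term bookkeeping you describe, while your bracket argument reuses the machinery already set up in Sections \ref{sec:Maurer-Cartan} and \ref{sec:extension} and automatically sorts the cross terms by bidegree; its only implicit inputs are that the abelian-extension structure transferred to $\g\oplus V$ is indeed \eqref{new-compatible-1}--\eqref{new-compatible-2} (using that the extension is abelian and that $\theta$, $\rho(x)u$, etc.\ land in $V=\Ker p$) and that $(V,\rho,\mu,\tilde{\rho},\tilde{\mu})$ is a representation, both of which are already established in or before Theorem \ref{thm:representation}.
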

\begin{proof}
For all $x,y,z\in \g$, by the definition of a pre-Lie algebra, we have
\begin{eqnarray*}0&=&(x\cdot_{(\theta,\rho,\mu)}y)\cdot_{(\theta,\rho,\mu)} z-x\cdot_{(\theta,\rho,\mu)}(y\cdot_{(\theta,\rho,\mu)} z)-(y\cdot_{(\theta,\rho,\mu)} x)\cdot_{(\theta,\rho,\mu)} z+y\cdot_{(\theta,\rho,\mu)} (x\cdot_{(\theta,\rho,\mu)} z)\\
&=&(x\cdot y+\theta(x,y)\big)\cdot_{(\theta,\rho,\mu)} z-x\cdot_{(\theta,\rho,\mu)}\big(y\cdot z +\theta(y,z)\big)\\
&&-\big(y\cdot x+\theta(y,x)\big)\cdot_{(\theta,\rho,\mu)} z+y\cdot_{(\theta,\rho,\mu)}\big(x\cdot z +\theta(x,z))\\
&=&\theta(x\cdot y,z)+\mu(z)\theta(x,y)-\theta(x,y\cdot z)-\rho(x)\theta(y,z)\\
&&-\theta(y\cdot x,z)-\mu(z)\theta(y,x)+\theta(y,x\cdot z)+\rho(y)\theta(x,z)\\
&=&-\partial_{\pi_1+\rho+\mu}\theta(x,y)
\end{eqnarray*}
which implies that $\partial_{\pi_1+\rho+\mu}\theta=0.$ Similarly, we have $\partial_{\pi_2+\tilde{\rho}+\tilde{\mu}}\tilde{\theta}=0.$

For all $x,y,z\in \g$, by \eqref{compatible-pre-Lie}, we have
\begin{eqnarray*}0&=&(x\ast_{(\tilde{\theta},\tilde{\rho},\tilde{\mu})} y)\cdot_{(\theta,\rho,\mu)} z+(x\cdot_{(\theta,\rho,\mu)} y)\ast_{(\tilde{\theta},\tilde{\rho},\tilde{\mu})} z-x\cdot_{(\theta,\rho,\mu)}(y\ast_{(\tilde{\theta},\tilde{\rho},\tilde{\mu})} z)-x\ast_{(\tilde{\theta},\tilde{\rho},\tilde{\mu})}(y\cdot_{(\theta,\rho,\mu)} z)\\
&&-(y\ast_{(\tilde{\theta},\tilde{\rho},\tilde{\mu})} x)\cdot_{(\theta,\rho,\mu)} z-(y\cdot_{(\theta,\rho,\mu)} x)\ast_{(\tilde{\theta},\tilde{\rho},\tilde{\mu})} z+y\cdot_{(\theta,\rho,\mu)}(x\ast_{(\tilde{\theta},\tilde{\rho},\tilde{\mu})} z)+y\ast_{(\tilde{\theta},\tilde{\rho},\tilde{\mu})}(x\cdot_{(\theta,\rho,\mu)} z)\\
&=&(x\ast y+\tilde{\theta}(x,y)\big)\cdot_{(\theta,\rho,\mu)} z+(x\cdot y+\theta(x,y))\ast_{(\tilde{\theta},\tilde{\rho},\tilde{\mu})} z\\
&&-x\cdot_{(\theta,\rho,\mu)}(y\ast z+\tilde{\theta}(y,z))-x\ast_{(\tilde{\theta},\tilde{\rho},\tilde{\mu})}(y\cdot z+\theta(y,z))\\
&&-(y\ast x+\tilde{\theta}(y,x)\big)\cdot_{(\theta,\rho,\mu)} z-(y\cdot x+\theta(y,x))\ast_{(\tilde{\theta},\tilde{\rho},\tilde{\mu})} z\\
&&+y\cdot_{(\theta,\rho,\mu)}(x\ast z+\tilde{\theta}(x,z))+y\ast_{(\tilde{\theta},\tilde{\rho},\tilde{\mu})}(x\cdot z+\theta(x,z))\\
&=&\theta(x\ast y,z)+\mu(z)\tilde{\theta}(x,y)+\tilde{\theta}(x\cdot y,z)+\tilde{\mu}(z)\theta(x,y)\\
&&-\theta(x,y\ast z)-\rho(x)\tilde{\theta}(y,z)-\tilde{\theta}(x,y\cdot z)-\tilde{\rho}(x)\theta(y,z)\\
&&-\theta(y\ast x,z)-\mu(z)\tilde{\theta}(y,x)-\tilde{\theta}(y\cdot x,z)-\tilde{\mu}(z)\theta(y,x)\\
&&+\theta(y,x\ast z)+\rho(y)\tilde{\theta}(x,z)+\tilde{\theta}(y,x\cdot z)+\tilde{\rho}(y)\theta(x,z)\\
&=&-(\partial_{\pi_2+\tilde{\rho}+\tilde{\mu}}\theta+\partial_{\pi_1+\rho+\mu}\tilde{\theta})(x,y,z),
\end{eqnarray*}
which implies that $\partial_{\pi_2+\tilde{\rho}+\tilde{\mu}}\theta+\partial_{\pi_1+\rho+\mu}\tilde{\theta}=0$. Thus, we have
\begin{equation*}
\partial(\theta,\tilde{\theta})=(\partial_{\pi_1+\rho+\mu}\theta,\partial_{\pi_2+\tilde{\rho}+\tilde{\mu}}\theta+\partial_{\pi_1+\rho+\mu}\tilde{\theta},\partial_{\pi_2+\tilde{\rho}+\tilde{\mu}}\tilde{\theta})=0.
\end{equation*}
This finishes the proof.
\end{proof}

\begin{defi}\label{defi:isomorphic}
Let $(\hat{{\g}_1},\cdot_{\hat{{\g}_1}},\ast_{\hat{{\g}_1}})$ and $(\hat{{\g}_2},\cdot_{\hat{{\g}_2}},\ast_{\hat{{\g}_2}})$ be two abelian extensions of a compatible pre-Lie algebra $(\g,\cdot,\ast)$ by  $(V,\cdot_V,\ast_V)$. They are said to be {\bf isomorphic} if there exists a compatible pre-Lie algebra isomorphism $\zeta:(\hat{{\g}_1},\cdot_{\hat{{\g}_1}},\ast_{\hat{{\g}_1}})\longrightarrow (\hat{{\g}_2},\cdot_{\hat{{\g}_2}},\ast_{\hat{{\g}_2}})$ such that the following diagram is commutative:
$$\xymatrix{
  0 \ar[r] &V\ar @{=}[d]\ar[r]^{\iota_1}& \hat{{\g}_1}\ar[d]_{\zeta}\ar[r]^{p_1}&\g\ar @{=}[d]\ar[r]&0\\
     0\ar[r] &V\ar[r]^{ \iota_2} &\hat{{\g}_2}\ar[r]^{p_2} &\g\ar[r]&0.              }$$
\end{defi}

\begin{lem}
Let $(\hat{\g}_1,\cdot_{\hat{\g}_1},\ast_{\hat{\g}_1})$ and $(\hat{\g}_2,\cdot_{\hat{\g}_2},\ast_{\hat{\g}_2})$ be two isomorphic abelian extensions of a compatible pre-Lie algebra $(\g,\cdot,\ast)$ by  $(V,\cdot_V,\ast_V)$. Then they are give rise to the same representation of $(\g,\cdot,\ast)$.
\end{lem}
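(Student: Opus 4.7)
The plan is to use the freedom granted by Theorem \ref{thm:representation}, which says the induced representation is independent of the choice of section. I will exploit this freedom to choose compatible sections on the two sides, and then transport the defining identities via $\zeta$.

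First I would fix an arbitrary section $s_1\colon \g\longrightarrow \hat{\g}_1$ of $p_1$ and define $s_2:=\zeta\circ s_1\colon \g\longrightarrow \hat{\g}_2$. Commutativity of the diagram in Definition \ref{defi:isomorphic} gives $p_2\circ s_2 = p_2\circ \zeta\circ s_1 = p_1\circ s_1 = \Id_{\g}$, so $s_2$ is a genuine section of the second extension. Denote the resulting representations by $(V,\rho_1,\mu_1,\tilde\rho_1,\tilde\mu_1)$ and $(V,\rho_2,\mu_2,\tilde\rho_2,\tilde\mu_2)$ respectively, defined via \eqref{representation-1}--\eqref{representation-2}.

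Next, for $x\in\g$ and $u\in V$, I would compute
\begin{equation*}
\iota_2(\rho_2(x)u)\;=\;s_2(x)\cdot_{\hat{\g}_2}\iota_2(u)\;=\;\zeta(s_1(x))\cdot_{\hat{\g}_2}\zeta(\iota_1(u))\;=\;\zeta\bigl(s_1(x)\cdot_{\hat{\g}_1}\iota_1(u)\bigr)\;=\;\zeta\bigl(\iota_1(\rho_1(x)u)\bigr)\;=\;\iota_2(\rho_1(x)u),
\end{equation*}
where I used $\zeta\circ\iota_1=\iota_2$ (commutativity) and the fact that $\zeta$ is a homomorphism for $``\cdot$''. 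Since $\iota_2$ is injective, $\rho_1=\rho_2$. The same calculation with the arguments swapped yields $\mu_1=\mu_2$, and replacing $``\cdot_{\hat{\g}_i}$'' by $``\ast_{\hat{\g}_i}$'' gives $\tilde\rho_1=\tilde\rho_2$ and $\tilde\mu_1=\tilde\mu_2$.

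Finally, if instead one had picked some other section $s_1'$ on the first side and $s_2'$ on the second side (unrelated to each other), Theorem \ref{thm:representation} guarantees that the resulting representations equal those obtained from $s_1$ and $s_2=\zeta\circ s_1$ respectively, so the two abelian extensions induce the very same representation of $(\g,\cdot,\ast)$. The only subtle point is the bookkeeping around identifying $V$ with its images under $\iota_1$ and $\iota_2$; once one treats $\iota_1,\iota_2$ explicitly (as above) rather than suppressing them, the argument is essentially the one-line observation that the transported section $\zeta\circ s_1$ is a section, and I expect no real obstacle beyond this notational care.
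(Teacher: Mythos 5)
Your proof is correct and is essentially the paper's own argument: the paper transports a section of $\hat{\g}_2$ back via $\zeta^{-1}$ and uses $\zeta\mid_V=\Id_V$ together with the independence-of-section result, while you transport $s_1$ forward via $\zeta$ and keep $\iota_1,\iota_2$ explicit, which is only a cosmetic difference. No gap.
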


\begin{proof}
Let $s_1:{\g}_1\longrightarrow \hat{\g}_1$ and $s_2:{\g}_2\longrightarrow \hat{\g}_2$ be two sections of $(\hat{\g}_1,\cdot_{\hat{\g}_1},\ast_{\hat{\g}_1})$ and $(\hat{\g}_2,\cdot_{\hat{\g}_2},\ast_{\hat{\g}_2})$ respectively. By Theorem \ref{thm:representation}, we obtain that $(V,\rho_1,\mu_1,\tilde{\rho}_1,\tilde{\mu}_1)$ and $(V,\rho_2,\mu_2,\tilde{\rho}_2,\tilde{\mu}_2)$ are their representations respectively. Define $s'_1:{\g}_1\longrightarrow \hat{\g}_1$ by $s'_1=\zeta^{-1}\circ s_2$. Since $\zeta:(\hat{\g}_1,\cdot_{\hat{\g}_1},\ast_{\hat{\g}_1})\longrightarrow (\hat{\g}_2,\cdot_{\hat{\g}_2},\ast_{\hat{\g}_2})$ is a compatible pre-Lie algebra isomorphism satisfying the commutative diagram in Definition \ref{defi:isomorphic}, by $p_2\circ \zeta=p_1$, we have
\begin{equation*}
p_1\circ s'_1=p_2\circ \zeta \circ \zeta^{-1}\circ s_2=\Id_{\g}.
\end{equation*}
Thus, we obtain that $s'_1$ is a section of $(\hat{\g}_1,\cdot_{\hat{\g}_1},\ast_{\hat{\g}_1})$. For all $x\in \g, u\in V$, by $\zeta\mid_V=\Id_V$, we have
\begin{eqnarray*}
\rho_1(x)(u)=s'_1(x) \cdot_{\hat{\g}_1} u=(\zeta^{-1}\circ s_2)(x)\cdot_{\hat{\g}_1} u=\zeta^{-1}(s_2(x) \cdot_{\hat{\g}_2} u)=\rho_2(x)(u),\\
\mu_1(x)(u)=u\cdot_{\hat{\g}_1}s'_1(x) =u\cdot_{\hat{\g}_1}(\zeta^{-1}\circ s_2)(x) =\zeta^{-1}(u \cdot_{\hat{\g}_2} s_2(x))=\mu_2(x)(u),
\end{eqnarray*}
which implies that $\rho_1=\rho_2$ and  $\mu_1=\mu_2$.
Similarly, we have $\tilde{\rho}_1=\tilde{\rho}_2$ and $\tilde{\mu}_1=\tilde{\mu}_2$. This finishes the proof.
\end{proof}
So in the sequel, we fixed a representation $(V,\rho,\mu,\tilde{\rho},\tilde{\mu})$ of a compatible pre-Lie algebra $(\g,\cdot,\ast)$ and consider abelian extensions that induce the given representation.

\begin{thm}
Abelian extensions of a compatible pre-Lie algebra $(\g,\cdot,\ast)$ by  $(V,\cdot_V,\ast_V)$ are classified by $\huaH^2(\g;V)$.
\end{thm}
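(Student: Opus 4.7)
The plan is to construct a bijection between $\huaH^2(\g;V)$ and the set of isomorphism classes of abelian extensions of $(\g,\cdot,\ast)$ by $(V,\cdot_V,\ast_V)$ inducing the fixed representation $(V,\rho,\mu,\tilde{\rho},\tilde{\mu})$. One direction is essentially supplied by Theorems \ref{thm:representation} and \ref{thm:cocycle}: given an abelian extension $\hat{\g}$ with section $s$, the pair $(\theta,\tilde{\theta})$ defined by \eqref{product-1}--\eqref{product-2} is a $2$-cocycle.

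First I would verify that the cohomology class $[(\theta,\tilde{\theta})]\in\huaH^2(\g;V)$ is independent of the section. If $s,s'$ are two sections, then $\varphi:=s-s'$ lands in $V$ (since $p\circ s=p\circ s'=\Id_{\g}$), so $\varphi\in\mathfrak{C}^1(\g;V)$. Expanding $s(x)\cdot_{\hat{\g}}s(y)-s(x\cdot y)$ in terms of $s'$ and using that $V$ is abelian inside $\hat{\g}$ gives
\begin{equation*}
\theta(x,y)-\theta'(x,y)=\rho(x)\varphi(y)+\mu(y)\varphi(x)-\varphi(x\cdot y)=(\partial_{\pi_1+\rho+\mu}\varphi)(x,y),
\end{equation*}
and similarly for $\tilde{\theta}-\tilde{\theta}'$, so $(\theta,\tilde{\theta})-(\theta',\tilde{\theta}')=\partial\varphi$. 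Combined with the preceding lemma, a parallel computation with an isomorphism $\zeta$ as in Definition \ref{defi:isomorphic} (transporting a section of $\hat{\g}_2$ via $\zeta^{-1}$ to a section of $\hat{\g}_1$) shows that isomorphic extensions yield cohomologous cocycles. This defines a map from isomorphism classes of abelian extensions to $\huaH^2(\g;V)$.

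For the reverse direction, given a $2$-cocycle $(\theta,\tilde{\theta})\in\mathfrak{C}^2(\g;V)$, I would endow $\g\oplus V$ with the products $\cdot_{(\theta,\rho,\mu)}$ and $\ast_{(\tilde{\theta},\tilde{\rho},\tilde{\mu})}$ defined by \eqref{new-compatible-1}--\eqref{new-compatible-2}. The three components of the identity $\partial(\theta,\tilde{\theta})=0$ correspond respectively to left-symmetry of $\cdot_{(\theta,\rho,\mu)}$, left-symmetry of $\ast_{(\tilde{\theta},\tilde{\rho},\tilde{\mu})}$, and the compatibility identity \eqref{compatible-pre-Lie} between them; this matching is the reverse of the calculation carried out in Theorem \ref{thm:cocycle}. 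Together with the obvious inclusion $V\hookrightarrow\g\oplus V$ and projection $\g\oplus V\twoheadrightarrow\g$, this produces an abelian extension inducing the representation $(V,\rho,\mu,\tilde{\rho},\tilde{\mu})$. If $(\theta',\tilde{\theta}')=(\theta,\tilde{\theta})+\partial\varphi$ for some $\varphi\in\mathfrak{C}^1(\g;V)$, then $\zeta(x+u):=x+\varphi(x)+u$ is a compatible pre-Lie algebra isomorphism fitting into the commutative diagram of Definition \ref{defi:isomorphic}, so the two extensions are isomorphic.

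Finally, the two maps are mutually inverse: starting from a cocycle, the canonical section $x\mapsto x\in\g\oplus V$ recovers the same cocycle; starting from an extension $\hat{\g}$ with section $s$, the linear bijection $x+u\mapsto s(x)+u$ between $\g\oplus V$ and $\hat{\g}$ is a compatible pre-Lie algebra isomorphism by the very definitions \eqref{product-1}--\eqref{representation-2}, so the reconstructed extension is isomorphic to $\hat{\g}$. The main obstacle is bookkeeping: setting up the precise dictionary between the three components of the cocycle condition $\partial(\theta,\tilde{\theta})=0$ and the three axioms (two left-symmetries plus compatibility) of a compatible pre-Lie algebra on $\g\oplus V$, together with the sign conventions for $\partial_{\pi_1+\rho+\mu}$ and $\partial_{\pi_2+\tilde{\rho}+\tilde{\mu}}$. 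Once this dictionary is in place, each verification reduces to a mild rearrangement of computations already appearing in the proofs of Theorems \ref{thm:representation} and \ref{thm:cocycle}.
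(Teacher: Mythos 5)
Your proposal is correct and follows essentially the same route as the paper: extract $(\theta,\tilde{\theta})$ from a section, check independence of the section and invariance under isomorphism, and conversely build the extension on $\g\oplus V$ from a $2$-cocycle, with cohomologous cocycles giving isomorphic extensions via $\zeta(x+u)=x+u+\varphi(x)$. Your extra remark that the two assignments are mutually inverse is only made implicit in the paper (through transferring the structure of $\hat{\g}$ to $\g\oplus V$), but it is the same argument.
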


\begin{proof}
Let $(\hat{\g},\cdot_{\hat{\g}},\ast_{\hat{\g}})$ be an abelian extension of a compatible pre-Lie algebra $(\g,\cdot,\ast)$ by  $(V,\cdot_V,\ast_V)$. Choosing a section $s:\g\longrightarrow \hat{\g}$, by Theorem \ref{thm:cocycle}, we obtain that $(\theta,\tilde{\theta})$ is a $2$-cocycle. Now we show that the cohomological class of $(\theta,\tilde{\theta})$ does not depend on the choice of sections. In fact, let $s$ and $s'$ be two different sections. Define $\varphi:\g\longrightarrow V$ by $\varphi(x)=s(x)-s'(x)$. Then for all $x,y\in \g$, we have
\begin{eqnarray*}
 \theta(x,y)&=&s(x)\cdot_{\hat{\g}} s(y)-s(x\cdot y)\\
  &=&\big(s'(x)+\varphi(x)\big)\cdot_{\hat{\g}} \big(s'(y)+\varphi(y)\big)-s'(x\cdot y)-\varphi(x\cdot y)\\
  &=&s'(x)\cdot_{\hat{\g}} s'(y)+\rho(x)\varphi(y)+\mu(y)\varphi(x)-s'(x\cdot y)-\varphi(x\cdot y)\\
  &=&\theta'(x,y)+\partial_{\pi_1+\rho+\mu}\varphi(x,y),
\end{eqnarray*}
which implies that $\theta-\theta'=\partial_{\pi_1+\rho+\mu}\varphi$. Similarly, we have $\tilde{\theta}-\tilde{\theta}'=\partial_{\pi_2+\tilde{\rho}+\tilde{\mu}}\varphi$.

Therefore, we obtain that $(\theta-\theta',\tilde{\theta}-\tilde{\theta}')=\partial \varphi$, $(\theta,\theta')$ and $(\tilde{\theta},\tilde{\theta}')$ are in the same cohomological class.

Now we  prove that isomorphic abelian extensions give rise to the same element in  $\huaH^2(\g;V)$. Assume that $(\hat{{\g}_1},\cdot_{\hat{{\g}_1}},\ast_{\hat{{\g}_1}})$ and $(\hat{{\g}_2},\cdot_{\hat{{\g}_2}},\ast_{\hat{{\g}_2}})$ are two isomorphic abelian extensions of a compatible pre-Lie algebra $(\g,\cdot,\ast)$ by $(V,\cdot_V,\ast_V)$, and $\zeta:(\hat{{\g}_1},\cdot_{\hat{{\g}_1}},\ast_{\hat{{\g}_1}})\longrightarrow (\hat{{\g}_2},\cdot_{\hat{{\g}_2}},\ast_{\hat{{\g}_2}})$ is a compatible pre-Lie algebra isomorphism satisfying the commutative diagram in Definition \ref{defi:isomorphic}. Assume that $s_1:\g\longrightarrow \hat{{\g}_1}$ is a section of $\hat{{\g}_1}$. By $p_2\circ \zeta=p_1$, we have
\begin{equation*}
p_2\circ (\zeta\circ s_1)=p_1\circ s_1=\Id_{\g}.
\end{equation*}
Thus, we obtain that $\zeta\circ s_1$ is a section of $\hat{{\g}_2}$. Define $s_2=\zeta\circ s_1$. Since $\zeta$ is an isomorphism of compatible pre-Lie algebras and $\zeta\mid_V=\Id_V$, for all $x,y\in \g$, we have
\begin{eqnarray*}
 \theta_2(x,y)&=&s_2(x)\cdot_{\hat{{\g}_2}} s_2(y)-s_2(x\cdot y)\\
  &=&(\zeta\circ s_1)(x)\cdot_{\hat{{\g}_2}}(\zeta\circ s_1)(y)-(\zeta\circ s_1)(x\cdot y)\\
  &=&\zeta\big(s_1(x)\cdot_{\hat{{\g}_1}} s_1(y)-s_1(x\cdot y)\big)\\
  &=&\theta_1(x,y),
\end{eqnarray*}
Similarly, we have $\tilde{\theta}_1=\tilde{\theta}_2$.
Thus, isomorphic abelian extensions gives rise to the same element in $\huaH^2(\g;V)$.

Conversely, given two 2-cocycles $(\theta_1,\tilde{\theta}_1)$ and $(\theta_2,\tilde{\theta}_2)$, we can construct two abelian extensions $(\g\oplus V,\cdot_{(\theta_1,\rho,\mu)},\ast_{(\tilde{\theta}_1,\tilde{\rho},\tilde{\mu})})$ and $(\g\oplus V,\cdot_{(\theta_2,\rho,\mu)},\ast_{(\tilde{\theta}_2,\tilde{\rho},\tilde{\mu})})$. If  $(\theta_1,\tilde{\theta}_1), (\theta_2,\tilde{\theta}_2)\in \huaH^2(\g;V)$, then there exists $\varphi:\g\longrightarrow V$, such that $\theta_1=\theta_2+\partial_{\pi_1+\rho+\mu}\varphi$ and $\tilde{\theta}_1=\tilde{\theta}_2+\partial_{\pi_2+\tilde{\rho}+\tilde{\mu}}\varphi$. We define $\zeta:\g\oplus V\longrightarrow \g\oplus V$ by
\begin{equation*}
\zeta(x+u)=x+u+\varphi(x),\quad \forall ~x\in \g, u\in V.
\end{equation*}
For all $x,y\in \g, u,v\in V$, by $\theta_1=\theta_2+\partial_{\pi_1+\rho+\mu}\varphi$, we have
\begin{eqnarray*}&&\zeta\big((x+u)\cdot_{(\theta_1,\rho,\mu)}(y+v)\big)- \zeta(x+u)\cdot_{(\theta_2,\rho,\mu)}\zeta(y+v)\\
 &=&\zeta\big(x\cdot y+\theta_1(x,y)+\rho(x)v+\mu(y)u\big)-\big(x+u+\varphi(x)\big)\cdot_{(\theta_2,\rho,\mu)} \big(y+v+\varphi(y)\big)\\
&=&\theta_1(x,y)+\varphi(x\cdot y)-\theta_2(x,y)-\rho(x)\varphi(y)-\mu(y)\varphi(x)\\
&=&\theta_1(x,y)-\theta_2(x,y)-\partial_{\pi_1+\rho+\mu}\varphi\\
 &=&0,
\end{eqnarray*}
which implies that
\begin{equation}\label{isomorphic-1}
\zeta\big((x+u)\cdot_{(\theta_1,\rho,\mu)}(y+v)\big)=\zeta(x+u)\cdot_{(\theta_2,\rho,\mu)}\zeta(y+v).
\end{equation}
Similarly, we have
\begin{equation}\label{isomorphic-2}
\zeta\big((x+u)\ast_{(\tilde{\theta}_1,\tilde{\rho},\tilde{\mu})}(y+v)\big)=\zeta(x+u)\ast_{(\tilde{\theta}_2,\tilde{\rho},\tilde{\mu})}\zeta(y+v).
\end{equation}
Thus, by \eqref{isomorphic-1} and \eqref{isomorphic-2}, $\zeta$ is a compatible pre-Lie algebra isomorphism from $(\g\oplus V,\cdot_{(\theta_1,\rho,\mu)},\ast_{(\tilde{\theta}_1,\tilde{\rho},\tilde{\mu})})$ to $(\g\oplus V,\cdot_{(\theta_2,\rho,\mu)},\ast_{(\tilde{\theta}_2,\tilde{\rho},\tilde{\mu})})$. Moreover, it is obvious that the diagram in Definition \ref{defi:isomorphic} is commutative. This finishes the proof.
\end{proof}

\noindent{\bf Acknowledgement:}  This work is  supported by  NSF of Jilin Province (No. YDZJ202201ZYTS589), NNSF of China (Nos. 12271085, 12071405) and the Fundamental Research Funds for the Central Universities.

 \end{document}